\newtheorem{thm}{Theorem}[section]
\newtheorem{lem}[thm]{Lemma}
\newtheorem{prop}[thm]{Proposition}
\newtheorem{conj}[thm]{Conjecture}
\newtheorem{defn}[thm]{Definition}
\newtheorem{clm}[thm]{Claim}
\newtheorem{qst}[thm]{Question}
\newcommand{\cl}[1]{\mathcal{#1}} 
\newcommand{\EE}{\mathbb{E}}	  
\newcommand{\reg}{\textnormal{reg}}
\renewcommand{\deg}{\textnormal{deg}}
\renewcommand*{\@fnsymbol}[1]{\ensuremath{\ifcase#1\or *\or 
		\mathsection\or \mathparagraph\or \|\or **\or \dagger\dagger
		\or \ddagger\ddagger \else\@ctrerr\fi}}
\begin{document}

\title{Client-Waiter games on complete and random graphs}

\author{Dean, Oren\thanks{School of Mathematical Sciences, Raymond and Beverly Sackler Faculty of Exact Sciences, Tel Aviv University,
		Tel Aviv, 6997801, Israel. Email: orendean@mail.tau.ac.il.} \and Krivelevich, Michael\thanks{School of Mathematical Sciences, Raymond and Beverly Sackler Faculty of Exact Sciences, Tel Aviv University, Tel
		Aviv, 6997801, Israel. Email: krivelev@post.tau.ac.il. Research supported in part by USA-Israel BSF Grant 2014361
		and by grant 912/12 from the Israel Science Foundation.}} 

\maketitle

\begin{abstract}
For a graph $ G $, a monotone increasing graph property $ \cl{P} $ and positive integer $ q $, we define the Client-Waiter game to be a two-player game which runs as follows. In each turn Waiter is offering Client a subset of at least one and at most $ q+1 $ unclaimed edges of $ G $ from which Client claims one, and the rest are claimed by Waiter. The game ends when all the edges have been claimed. If Client's graph has property $ \cl{P} $ by the end of the game, then he wins the game, otherwise Waiter is the winner. In this paper we study several Client-Waiter games on the edge set of the complete graph, and the $ H $-game on the edge set of the random graph. For the complete graph we consider games where Client tries to build a large star, a long path and a large connected component. We obtain lower and upper bounds on the critical bias for these games and compare them with the corresponding Waiter-Client games and with the probabilistic intuition. For the $ H $-game on the random graph we show that the known results for the corresponding Maker-Breaker game are essentially the same for the Client-Waiter game, and we extend those results for the biased games and for trees.
\end{abstract}

\section{Introduction}
Positional games are games of complete information with no random moves. The inception of the study of positional games goes back to the seminal papers of Hales and Jewett \cite{HJ}, of Lehman \cite{Lehman}, and of Erd\H{o}s and Selfridge \cite{ES}. It has developed since to be a recognized area of combinatorics, and the last decade has seen a burst of papers and an increasing interest in this field (see, for example, the monograph of Beck \cite{BECK08} and the recent monograph \cite{HKSS14}).

Several variants of positional games have been considered in the literature. In the classical \textbf{Maker-Breaker} game the two players ('Maker' and 'Breaker') alternately claim elements from  a set $ X $ (\emph{the board}). Maker wins the game if he fully claims some set from a predefined family $ \cl{F}\subseteq \cl{P}(X) $, which we call the \emph{winning sets}, and Breaker wins otherwise. 

Recently, some attention was turned to another type of game, namely the \textbf{Waiter-Client} game. In this variant, initially defined and studied by Beck (e.g. \cite{BECK02}) under the name ``Picker-Chooser'', in each turn Waiter is picking a subset of $ q+1 $ free elements, where $ q $ is a fixed positive integer called \emph{the bias}. Client then needs to choose one element from this subset which he claims, while Waiter claims the remaining $ q $ elements. If there are less than $ q+1 $ remaining elements then in the last turn Waiter will claim all of them. Waiter's goal is to force Client to fully claim a winning set while Client tries to avoid it. It can be easily shown that this game is \emph{bias monotone}, i.e. for two positive integers $ q_1<q_2 $, a winning strategy for Waiter when playing with bias $ q_2 $ implies a winning strategy when playing with bias $ q_1 $. We can therefore define the \emph{critical bias}, $ q_c $, to be the unique integer for which Waiter has a winning strategy if and only if $ q\leq q_c $.

The \textbf{Client-Waiter} game runs much the same as the Waiter-Client with two differences. The first is that Client now tries to claim a winning set and Waiter tries to prevent it. The second is that we introduce a special monotonicity rule which states that Waiter may offer any number of elements between 1 and $ q+1 $ in a turn. Client still claims one element in each turn (including the last one). The motivation for this rule is the fact that without it the game is not bias monotone. (Consider for example a game with $ n $ pairwise disjoint winning sets of size 2. In this game Waiter wins whenever $ q+1 $ is even and loses otherwise.) The \emph{critical bias}, $ q_c $, in this game can thus be defined to be the unique integer for which Client has a winning strategy if and only if $ q\leq q_c $.

We denote by $ WC(X,\cl{F},q) $ and $ CW(X,\cl{F},q) $ the Waiter-Client and  Client-Waiter games, with board $ X $, winning sets $ \cl{F} $ and bias $ q $. Usually when considering a Waiter-Client or a Client-Waiter game we are interested in finding, or at least bounding, the critical bias. Another question we might ask is what is the probability threshold for the property  ``Waiter wins $ WC(X_p,\cl{F}_p,q) $" or ``Client wins $ CW(X_p,\cl{F}_p,q) $", where $ X_p $ is a random subset of $ X $ generated by removing each element from $ X $ randomly and independently with probability $ 1-p $ and $ \cl{F}_p $ is the subfamily of $ \cl{F} $ which includes only the sets of $ \cl{F} $ which are subsets of $ X_p $. 

Besides the interest in those games for their own right it was observed that in many cases they exhibit a strong probabilistic intuition. That is, the outcome of many Waiter-Client and Client-Waiter games is roughly the same as what we would expect it to be when both players just play randomly (although a random strategy for a single player is usually far from being optimal). See for example \cite{BECK02,BECK08, BiasedPC,CP_Games,Man_waiter,PC_H,WC_CW_ham,WC_CW_Plan}.

In this paper we look into Client-Waiter games played on the edge set of a graph $ G $, where $ G $ is either $ K_n $ --- the complete graph on $ n $ vertices, or $ G_{n,p} $ --- the random graph generated by taking every edge of $ K_n $ to be in the graph randomly and independently with probability $ p $. The $ G_{n,p} $ model is the most commonly studied probability distribution on graphs (see \cite{B01}, \cite{JLR00}, and the most recent \cite{FK15}). \\
For an infinite series of events $ \{ A_n \}_{n\geq 1} $ we say that $ A_n $ happens \emph{with high probability (w.h.p.)} if $ \lim\limits_{n\to\infty}\Pr[A_n]=1 $. Let $ \cl{P} $ be a monotone increasing graph property. We say that $ G_{n,p} $ goes through a \emph{phase transition} around $ p^* $ with relation to $ \cl{P} $ if w.h.p. $ G_{n,p}\notin\cl{P} $ whenever $ p\ll p^* $, while w.h.p. $ G_{n,p}\in\cl{P} $ when $ p\gg p^* $. \\
For games on $ K_n $ we bound the critical bias for a few games where Client tries to achieve some monotone graph theoretic property. The interpretation of the probabilistic intuition in this case is that the critical bias should be $ q_c\approx 1/p^* $, where $ p^* $ is the value around which $ G_{n,p} $ goes through a phase transition with relation to this property.  \\
For a fixed graph $ H $ we define \emph{the $ H $-game}, denoted $ CW(G,H,q) $, to be the Client-Waiter game on the edge-set of $ G $ with bias $ q $, where Client's goal is to build in his graph a copy of $ H $. We investigate the $ H $-game on the edges of $ G_{n,p} $ and find the value of $ p^* $ for the property ``Client wins $ CW(G_{n,p},H,q) $".

Not many biased Client-Waiter games on graphs have been studied to date. Bednarska-Bzd\c{e}ga, Hefetz and \L{}uczak  (\cite{BiasedPC}) showed that in a Client-Waiter game on the edges of $ K_n $ with bias $ q=(1+o(1))n/\ln n $ Waiter can isolate a vertex in Client's graph, while if the bias is $ q=(1-o(1))n/\ln n $ then Client can guarantee his graph will be $ k $-vertex connected, or alternatively he can guarantee his graph is Hamiltonian. This fits very well with the probabilistic intuition as $ G_{n,p} $ goes through a phase transition with relation to the properties of being $ k $-vertex connected and Hamiltonian around $ p^*=\ln n/n $ (see, for example, Chapters 4.2 and 6.2 of \cite{FK15}). Recently Hefetz, Krivelevich and Tan (\cite{WC_CW_Plan}) analysed the non-planarity, $ K_t $-minor and non-$ k $-colorability Client-Waiter games. They showed that these games also exhibit some probabilistic intuition (though not as strong as in the former games).

We start by considering the \textbf{maximum-degree game}, i.e. the game in which Client tries to claim a star of maximum possible size. For integers $ n,q $ let $ \mathcal{S}(n,q) $ denote the size (no. of edges) of a largest star graph Client can build when playing a Client-Waiter game with bias $ q $ on $ E(K_n) $.
\begin{prop}\label{prp:star}
	For any positive integers $ n,k $, if $ q\geq \lceil n/k\rceil-2 $ then $ \mathcal{S}(n,q)\leq 2k $, while for $ k\geq 2 $, if $ q< \dfrac{n-1}{k-1}-1 $ then $ \mathcal{S}(n,q)\geq k $.
\end{prop}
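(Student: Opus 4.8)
For the lower bound I would have Client commit in advance to an arbitrary vertex $v$ and play the obvious greedy strategy: in every turn, if the set offered by Waiter contains an edge incident to $v$, claim such an edge; otherwise claim an arbitrary offered edge. The analysis is a short counting argument. Whenever an edge $vu$ is claimed it lies in the set offered that turn, so that set contains an edge at $v$, and hence by Client's rule the edge he claims that turn is incident to $v$. Consequently every one of the $n-1$ edges at $v$ is distributed during one of the turns in which Client claims a $v$-edge, Client claims exactly one $v$-edge in each such turn, and each such turn distributes at most $q+1$ of the $v$-edges. If $t$ denotes the number of such turns then the degree of $v$ in Client's graph equals $t$ and $n-1\le t(q+1)$, so $t\ge (n-1)/(q+1)$. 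The hypothesis $q<\frac{n-1}{k-1}-1$ is exactly $(q+1)(k-1)<n-1$, i.e.\ $\frac{n-1}{q+1}>k-1$, so the integer $t$ is at least $k$ and Client has built a star with $k$ edges.

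\textbf{Upper bound.} First dispose of the trivial range: if $n\le 2k$ then every star has at most $n-1<2k$ edges and there is nothing to prove, so assume $n>2k$. For the rest, write $d_C(v)$ and $f(v)$ for the current Client-degree and number of free edges at a vertex $v$. Rewriting the hypothesis, $q+2\ge\lceil n/k\rceil\ge n/k$, hence $n-1<n\le (q+1)k+k$, so at every stage of the game and for every vertex $v$
\[
f(v)\;\le\; n-1-d_C(v)\;\le\;(q+1)k+k-1-d_C(v).
\]
In particular, once $d_C(v)\ge k$ the free edges still incident to $v$ number strictly less than $(q+1)k$, so they can all be distributed in at most $k$ turns, each offering a star of at most $q+1$ free edges centred at $v$. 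This suggests the following Waiter strategy. Call a vertex \emph{critical} once its Client-degree reaches $k$. Whenever a critical vertex with a free edge exists, Waiter devotes his moves to \emph{finishing} one of them — say one of currently largest Client-degree — by repeatedly offering $\min\{q+1,f(v)\}$ free edges forming a star at $v$ until $f(v)=0$; otherwise Waiter plays arbitrarily (say, offering any $q+1$ free edges, or all of them if fewer remain). By the displayed estimate, once Waiter starts finishing a vertex it is done within $k$ turns, so it ends with Client-degree at most $2k$; moreover, while $v$ is being finished every edge Client claims is a $v$-edge, so in each such turn exactly one vertex other than $v$ gains a Client-edge. Since a non-critical vertex can gain only $+1$ in such a turn, it cannot pass from non-critical to degree exceeding $2k$ without first becoming critical and thus being attended to.

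\textbf{Where the work is.} The delicate point — and the reason the bound is $2k$ rather than roughly $k$ — is the bookkeeping when several vertices turn critical close together: while Waiter is finishing one vertex a second one may reach Client-degree $k$, be forced to wait, and pick up further Client-edges before Waiter turns to it. My plan for controlling this is (i) always to finish a vertex of currently maximum Client-degree, so that a vertex whose degree is being driven up is the next one handled, and (ii) when forming the stars used to finish $v$, to offer first the edges of $v$ joining it to vertices of largest Client-degree, so that edges from $v$ to near-critical vertices leave the board before those vertices become critical. Turning this into a rigorous proof that no vertex ever lingers above Client-degree $k$ long enough to reach $2k+1$ is the heart of the argument; the two one-line estimates above are all that is otherwise needed, and one should also verify directly the small cases $n\le k^2+O(k)$ where the slack in the estimate is smallest.
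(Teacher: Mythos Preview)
Your greedy single-vertex strategy is correct. The paper takes an even shorter route: since $q+1<(n-1)/(k-1)$, at the end of the game Client holds more than $n(k-1)/2$ edges, so his average degree exceeds $k-1$ and some vertex has degree at least $k$. Both arguments are fine; yours has the minor advantage of exhibiting an explicit strategy, the paper's of being a one-liner.

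\textbf{Upper bound.} Here there is a genuine gap, and your proposed strategy is not the one the paper uses. You yourself flag the difficulty --- while Waiter is finishing one critical vertex another may drift upward --- and you offer only the heuristics (i) and (ii) without carrying out the analysis. In fact the sketch as it stands does not obviously cap the degree at $2k$. Suppose $u_3$ reaches degree $k$ during the finishing of $u_1$ and then gains one more edge during the finishing of $u_2$; it enters its own finishing phase at degree $k+1$, but your estimate $f(v)<(q+1)k$ still only bounds that phase by $k$ turns, so $u_3$ could end at $2k+1$. Closing this would require tracking Waiter-degrees as well, and it is not clear the bookkeeping works without further ideas.

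The paper sidesteps the cascade problem entirely with a different and much cleaner strategy. Waiter plays a \emph{path-following} rule: he starts at an arbitrary vertex $v_0$ and offers (up to) $q+1$ free edges incident to it; if Client chooses $(v_0,v_1)$, the next turn Waiter offers a star at $v_1$; in general, after Client picks $(v_{i-1},v_i)$ the next offered star is centred at $v_i$ (restarting at any vertex with a free edge if $v_i$ has none). Now fix a vertex $v$. Client's edges at $v$ pair up into consecutive turns --- an ``incoming'' edge $(u,v)$ immediately followed by an ``outgoing'' edge $(v,w)$ --- with at most one exception, and in the outgoing turn of each pair Waiter claims $q$ further edges at $v$. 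Each pair thus accounts for $q+2$ edges at $v$, whence $\Delta_C\le 2\lceil (n-1)/(q+2)\rceil\le 2k$. No queue of waiting critical vertices ever forms, and the whole analysis is two lines.
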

Notice that the claim in the above proposition is not asymptotic, and $ k $ can be fixed or it can be a function of $ n $. 

In the \textbf{large component game} Client tries to build in his graph a connected component as large as possible, while in the \textbf{path game} he tries to build a path as long as possible. Let $\mathcal{C}(n,q),\mathcal{P}(n,q) $ denote the size of a largest component, and the length of a longest path (no. of edges), respectively, that Client can build in the Client-Waiter game on the edges of $ K_n $ with bias $ q $. 
\begin{thm}\label{thm:component}\text{}
	\begin{enumerate}[label=\normalfont\bfseries(\roman*)]
		\item For every $ n $ and for every $ k>0 $, if $ q\geq 6n^{2^k/(2^k-1)} $  then $ \mathcal{C}(n,q)<3^k $.
		\item For every $ \epsilon>0 $ and $ n $ large enough, if $ q\geq 1.6n $ then $ \cl{C}(n,q)< (\ln n)^{\log_23+\epsilon} $.
		\item  For every $ 0<\epsilon<1 $ and $ n $ large enough, if $ q\leq(1-\epsilon)\dfrac{n}{2} $ then $ \mathcal{C}(n,q)\geq e^{-5/2\epsilon+3/2} n$.
	\end{enumerate}
\end{thm}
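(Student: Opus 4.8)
We have three parts to prove. Part (iii) is a strategy for Client; parts (i) and (ii) are strategies for Waiter.

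For part (iii), the plan is to give Client an explicit greedy strategy that keeps his graph sparse (in fact a forest) while steadily merging components. The key idea: Client should always pick an offered edge that joins two distinct components of his current graph whenever possible, preferring to attach the edge to the largest component he can reach. Since Waiter offers at most $q+1 \leq (1-\epsilon)n/2 + 1$ edges per round, a counting argument shows that in each offered set there is typically an edge not creating a cycle and connecting the current large component to a new vertex or small component; the constraint $q \leq (1-\epsilon)n/2$ is exactly what guarantees Client keeps acquiring enough "fresh" edges before the board is exhausted. I would track the number of Client-edges incident to the current largest component and the number of vertices outside it, and show the largest component grows by a constant factor fraction of the remaining vertices at a controlled rate; solving the resulting recursion yields the $e^{-5/(2\epsilon)+3/2}n$ bound. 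The main obstacle here is handling rounds where every offered edge lies inside already-connected parts of Client's graph or inside Waiter's graph — I would need to bound how often this can happen using the total budget of edges and the fact that Waiter has claimed only about $q$ times as many edges as Client.

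For parts (i) and (ii), Waiter wants to keep every component of Client's graph small. The plan is a potential/weight-function argument in the spirit of Erd\H{o}s--Selfridge and its Client-Waiter analogues. Assign to each current component $C$ of Client's graph a weight that grows rapidly in $|C|$ (for part (i), something like a tower-type or iterated bound matching the $3^k$ versus $n^{2^k/(2^k-1)}$ trade-off; for part (ii) a bound of the form $w^{|C|}$ for suitable $w$). In each round Waiter partitions (most of) the offered set: since Client claims only one of the $q+1$ offered edges and Waiter takes the other $q$, Waiter should offer edges so that no matter which one Client takes, the increase in total potential is controlled — concretely, Waiter offers $q$ "dangerous" edges together with one harmless edge, forcing Client either to take the harmless one or to merge two components at the cost of Waiter permanently blocking $q$ other merging opportunities. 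I would set up the recursion relating the component-size threshold to the bias: blocking a would-be merge of two size-$s$ components requires removing on the order of $s^2$ potential edges between them, which with bias $q$ lets Waiter survive until components reach size roughly $s$ with $s^2 \approx n^{2}/q^{\dots}$; iterating the "square the size, raise the exponent" step $k$ times gives the $q \geq 6 n^{2^k/(2^k-1)} \Rightarrow \mathcal{C}(n,q) < 3^k$ statement, and optimizing $k \approx \log_2\log_2 n$ with $q = 1.6n$ gives part (ii) with the exponent $\log_2 3$.

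I expect the main obstacle to be part (i): getting the constants $6$ and $3^k$ to come out cleanly from the iteration requires carefully choosing, at each "level", how Waiter allocates offered edges between the goal of preventing a single large merge and the goal of not letting too many medium components accumulate — essentially a simultaneous Erd\H{o}s--Selfridge bound across all size scales at once. I would handle this by proving a one-step lemma: if all of Client's components have size $< 3^{j}$ and Waiter has a suitable budget, then Waiter can ensure all components stay below $3^{j+1}$ while using up only a $1/n^{2^{j}/(2^{j}-1) - 2^{j+1}/(2^{j+1}-1)}$-fraction of the remaining edges, then chain $k$ applications. The bookkeeping of how much of the board is consumed at each level, and verifying the geometric series of consumed fractions stays below $1$, is the delicate part; everything else is routine.
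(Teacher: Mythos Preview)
Your proposal has genuine gaps in all three parts, and in each case the paper takes a different route.

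For part (iii), the greedy strategy you describe cannot be analyzed the way you suggest. You claim that ``in each offered set there is typically an edge not creating a cycle and connecting the current large component to a new vertex,'' but Waiter controls entirely which edges are offered: nothing prevents Waiter from offering, round after round, only edges among vertices disjoint from Client's current largest component, or only edges internal to it. Your ``counting argument'' never materializes, and the obstacle you yourself flag (rounds where every offered edge is useless) is exactly the one that kills the approach. The paper avoids tracking component growth altogether: it applies the derandomized random-choice criterion (Theorem~\ref{thm:tool_ES_subset}) to the family of small-but-dense subgraphs, obtaining for Client a subgraph $G_C$ with at least $\binom{n}{2}/(q+1)\geq n/(1-\epsilon)$ edges in which every set of at most $\theta n$ vertices spans fewer than $(1+\delta)$ times its size in edges. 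If all components of $G_C$ had fewer than $\theta n$ vertices, summing would give $e(G_C)<(1+\delta)n=n/(1-\epsilon)$, a contradiction.

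For parts (i) and (ii), your Erd\H{o}s--Selfridge-style weight scheme is not what the paper does, and the sketch does not contain a working mechanism. The line ``Waiter offers $q$ dangerous edges together with one harmless edge'' misreads the game: Client picks whichever offered edge he likes, so he will always take a dangerous one, and the remaining $q$ going to Waiter does not control the potential (there are generally far more than $q$ merging edges between two components). A direct potential/counting criterion over spanning trees gives only $q\gtrsim n^{k/(k-1)}$ to block size-$k$ components, much weaker than the stated bound. The paper instead gives an explicit recursive strategy $S_C$: in Stage~I Waiter repeatedly picks a maximal $T_i\subseteq U$ with $e_F(T_i,U)\leq q+1$ and offers all those edges, shrinking $U$; after a short Stage~II, Client's graph consists of at most $s$ components each of size $\leq 3$ with at most $3$ free edges between any pair; Stage~III plays the same game on an auxiliary $K_s$ with bias $\lfloor q/3\rfloor$. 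This yields the recursion $\mathcal{C}(n,q)\leq 3\,\mathcal{C}(s,\lfloor q/3\rfloor)$, and bounding $s\lesssim n^2/q$ gives (i) by induction on $k$; a sharper estimate of $s$ (Lemmas~\ref{lem:comp_log1} and~\ref{lem:comp_log2}) gives (ii). Your ``square the size, raise the exponent'' intuition is morally this recursion, but it arises from the constructive reduction, not from a weight function.
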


\begin{thm}\label{thm:path}\text{}
	\begin{enumerate}[label=\normalfont\bfseries(\roman*)]
		\item For every $ n $ and for every $ k>0 $, if $ q\geq 3n^{2^k/(2^k-1)} $  then $ \mathcal{P}(n,q)<2k $.
		\item If $ q\geq n $ and $ n $ is large enough then $ \mathcal{P}(n,q)<3\ln\ln n $.
		\item For every $ \epsilon>0 $ small enough  there is $ n_0 $ such that for every $ n>n_0 $, if $ q\leq(1-\epsilon)\dfrac{n}{2} $ then $ \cl{P}(n,q)\geq e^{-12/\epsilon}n $.
		\item For every $ \epsilon>0 $ small enough there is $ n_0 $ such that for every $ n>n_0 $, if $ q\leq\epsilon n $ then $ \cl{P}(n,q)\geq (1-4\epsilon\ln(1/\epsilon))n $.
\end{enumerate}
\end{thm}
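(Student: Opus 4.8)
The four items split into two Waiter upper bounds, \textbf{(i)}--\textbf{(ii)}, and two Client lower bounds, \textbf{(iii)}--\textbf{(iv)}, and I would handle each pair through one common idea. For \textbf{(i)} the plan is induction on $k$, with a strengthened hypothesis. The base case $k=1$ is immediate, since $3n^2>\binom n2$ lets Waiter offer the whole board on move one, leaving Client a single edge and $\mathcal P(n,q)=1<2$. For the step, the arithmetic that drives everything is that $a_k:=2^k/(2^k-1)$ satisfies $a_{k+1}=\tfrac{2a_k}{a_k+1}$, equivalently $m^{a_k}=n^{a_{k+1}}$ precisely when $n/m=n^{1/(2^{k+1}-1)}$. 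So Waiter partitions $V(K_n)$ into $t=n^{1/(2^{k+1}-1)}$ blocks of size $m=n/t$, plays inside each block the strategy given by the induction hypothesis for keeping the in-block longest path below $2k$ (the bias $q\ge 3n^{a_{k+1}}=3m^{a_k}$ is exactly what that recursive strategy consumes), and uses the surplus in the constant to run an auxiliary strategy on the crossing edges that forbids splicing two in-block sub-paths through a crossing edge into a path with $2k+2$ edges. Making this work is the technical heart of the argument: Client inevitably owns a positive fraction of the crossing edges, so Waiter cannot simply deny them and must instead control only the crossing edges at potential \emph{splice points}; a short count then shows this forces the in-block strategy to keep the number of splice points per block bounded by an absolute constant, so the real content is a strengthened induction hypothesis asserting both ``no $2k$-edge path'' and ``only $O(1)$ vertices per block can sit near the end of a near-maximum in-block path,'' and checking that this stronger statement is preserved by the block recursion. (A plausible alternative is a single global weighted path-count potential whose drift under a suitable Waiter strategy is controlled exactly when $a_{k+1}=2a_k/(a_k+1)$.) Either way this is the step I expect to get stuck on.

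Part \textbf{(ii)} is the boundary regime $q\ge n$, where \textbf{(i)} is too lossy to quote directly: its constant would only deliver the conclusion from $q\ge 6n$. The plan is to rerun the strategy of \textbf{(i)} at depth $k=(1+o(1))\log_2\log_2 n$ with the block sizes and the splice-point bookkeeping tuned so that a bias of $n$ already suffices; equivalently, to show a Waiter with bias $n$ forces the set of endpoints of Client paths with $j$ edges to shrink fast enough (very roughly, an iterated square root, two recursion levels per halving of $\log(\cdot)$) that it is empty for some $j<3\ln\ln n$. The obstacle is again the splice-point invariant, now carried with sharp constants.

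For the Client lower bounds \textbf{(iii)}--\textbf{(iv)} I would give Client one rotation-and-extension, depth-first-search-flavoured path-growing strategy and analyse it at two scales. Client maintains a path $P$ and the set $U$ of vertices that are neither on $P$ nor ``burned''; each turn, if Waiter's offer contains an edge from the current endpoint of $P$ into $U$ he takes it (extending $P$, shrinking $U$), otherwise he plays a harmless edge, and the instant Waiter owns all edges from the current endpoint $v$ into $U$ the vertex $v$ is burned and $P$ retreats by one. The crucial simplification is that Client's graph only grows, so it suffices that $P$ is long at \emph{some} moment, not at the end. Each vertex is burned at most once, and an edge Waiter owns is charged to at most one burn (if $uv$ is charged when $v$ burns then $u\in U$ then, so $u$ burns later, by which time $v$ is out of $U$); hence $\sum_{v\text{ burned}}\bigl(|U|\text{ at that burn}\bigr)\le\binom n2$, and coupling this with the fact that Waiter claims only $q$ edges per turn while Client keeps making progress bounds how often $P$ can be forced to retreat before reaching the target depth. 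With $q\le(1-\epsilon)n/2$ the bound is tight only up to iterating a merge/extend step $\Theta(1/\epsilon)$ times, each possibly costing a constant factor, giving a path on an $e^{-\Theta(1/\epsilon)}$-fraction of $V$ (part \textbf{(iii)}); with $q\le\epsilon n$ one instead grows $P$ through $\Theta(\log(1/\epsilon))$ doubling stages, losing an $O(\epsilon)$-fraction of the vertices to burns in each stage and abandoning the final $U$ of size $\Theta(\epsilon n)$, for a path on $(1-4\epsilon\ln(1/\epsilon))n$ vertices (part \textbf{(iv)}). Both match the probabilistic intuition, since a Client with bias $q$ should be able to emulate a random graph of density $\approx 1/q$, whose longest path has exactly these sizes; the delicate point — where the constants $e^{-12/\epsilon}$ and $4\epsilon\ln(1/\epsilon)$ are actually pinned down — is keeping the charging against the correct time-varying quantity as $U$ shrinks and vertices get burned.
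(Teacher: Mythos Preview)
Your proposal diverges from the paper in all four parts, and for \textbf{(i)}--\textbf{(ii)} it contains a genuine gap that you yourself flag.

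\textbf{Parts (i)--(ii).} Your block-partition scheme is not what the paper does, and the obstacle you anticipate --- controlling the crossing edges so that in-block paths cannot be spliced --- is real and, as far as I can see, not surmountable along the lines you sketch. With $q=3n^{a_{k+1}}$ there are $\Theta(n^2)$ crossing edges and Client will own a $\Theta(1/q)$-fraction of them, which is already far more than enough to connect up short pieces from many blocks; a ``splice-point'' invariant bounding $O(1)$ dangerous vertices per block does not follow from the in-block strategy and there is no obvious auxiliary game on the crossing edges that Waiter can win with the stated bias. The paper sidesteps this entirely with a different reduction. Waiter does \emph{not} partition the vertex set; instead, in a first stage he repeatedly picks a maximal $T_i\subseteq U$ with $e_F(T_i,V)\le q+1$ and offers all of $E_F(T_i,V)$ at once. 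The effect is that after Stage~I Client's graph is a disjoint union of stars with centres $Y$, $|Y|=s$, and \emph{all} remaining free edges lie inside $Y$. Any path in Client's eventual graph uses at most two edges per star (in and out through the centre), so one gets the clean recursion
\[
\mathcal P(n,q)\le \mathcal P(s,q)+2,
\]
with the \emph{same} bias $q$ on a much smaller $K_s$. The arithmetic $s\le \tfrac{2}{3}(q/3)^{(2^{k-1}-1)/2^{k-1}}$ then closes the induction for \textbf{(i)}, and a slightly sharper count on $s$ (tracking how $q/s$ grows geometrically) yields \textbf{(ii)}. The key idea you are missing is this star-reduction: rather than isolating blocks and fighting over the seams, Waiter forces a structural shape on Client's graph that makes the seams trivial.

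\textbf{Parts (iii)--(iv).} Here your explicit DFS/rotation strategy is a genuinely different route from the paper, which is non-constructive: it uses the potential criteria (the analogues of Beck's and the transversal sufficient conditions) to guarantee that Client's final graph has certain global edge-distribution properties, and then invokes deterministic lemmas (a DFS argument for \textbf{(iii)}, and the standard ``edge between every pair of $k$-sets $\Rightarrow$ path of length $n-2k$'' lemma for \textbf{(iv)}) to extract the long path. For \textbf{(iv)} in particular the paper's argument is a two-line calculation once one has the transversal criterion, with $\delta=2\epsilon\ln(1/\epsilon)$ falling out immediately. Your approach is plausible in spirit, but the charging sketch is not yet a proof: the inequality $\sum_{v\text{ burned}}|U_v|\le\binom n2$ does not by itself bound the number of retreats before $P$ reaches a given length, because extensions and burns interleave and $|U|$ is not monotone in the relevant way; you would need to track how many edges Waiter must spend \emph{per burn} against the current $|U|$, and that is exactly where the Client--Waiter monotonicity rule (Waiter may offer fewer than $q+1$ edges) bites. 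The paper's global approach avoids this turn-by-turn bookkeeping altogether.
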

We did not find a matching lower bound for Theorems~\ref{thm:component}(i) and \ref{thm:path}(i). We conjecture the following.
\begin{conj}
	For every positive integer $ k $ and every constant $ C>0 $, Client wins $ CW(K_n,P_k, Cn) $, provided $ n $ is large enough.
\end{conj}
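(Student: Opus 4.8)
This is the central open problem left by our work, and we describe the line of attack that seems most promising. Fix $k$ and $C$; the plan is to induct on the path length, carrying the stronger statement that for every $N$, once $n$ is large enough (in terms of $k,C,N$), in $CW(K_n,\cdot,Cn)$ Client can build $N$ pairwise vertex-disjoint copies of $P_k$ while keeping the eventual endpoints of these copies hidden from Waiter for as long as possible. The base case $k=1$ --- a matching with $N$ edges --- holds for \emph{every} bias: split $V(K_n)$ into $N$ blocks, each spanning many edges, and let Client follow the rule ``if the offered set meets a block in which Client holds no edge, take one such edge; otherwise take anything''. If a block remained empty at the end, then Waiter claimed all of its edges, yet each round in which Waiter claims an edge of that block forces Client (by the rule) to make some \emph{other} block non-empty, so this can occur in at most $N$ rounds and thus involves at most $N(Cn+1)$ edges --- fewer than the block contains once $n$ is large.

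For the inductive step one tries to build $P_{2k}$ from two copies of $P_k$ --- a doubling that parallels the exponent $2^k/(2^k-1)$ in Theorem~\ref{thm:path}(i). Using the inductive hypothesis Client first assembles $N'$ (with $N'\gg N$) disjoint copies $Q_1,\dots,Q_{N'}$ of $P_k$; it then remains to obtain, $N$ times over and on disjoint vertex sets, a Client edge joining an endpoint of one still-unused copy to an endpoint of another, since each such edge welds two copies into a $P_{2k+1}\supseteq P_{2k}$. The hope is that, because the endpoint vertices were not determined while the $Q_i$ were being built, the great majority of the $\Theta((N')^2)$ potential welding edges are still unclaimed once the copies are complete, after which a covering argument of the same flavour as in the base case supplies the welds: a welding edge claimed by Waiter must first have been offered, but offering one after the copies exist lets Client take it, so in the end Waiter must surrender some. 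Even setting this up requires care, since a Client--Waiter game assembled as a ``product'' of sub-games on disjoint blocks is less transparent than in the Maker--Breaker world: Waiter's offered sets straddle the blocks and Client can react in only one block per turn.

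The real obstacle --- and the reason the statement is only a conjecture --- is the quantitative core of the welding step, which runs into the mismatch between the players' speeds: Client gains one edge per round while Waiter gains up to $Cn$, so over any phase long enough for Client to finish his copies Waiter claims on the order of $n^2$ edges, comparable to the total number of candidate welding edges across all groups. Hence one cannot argue that welding edges survive merely ``because Waiter ran out of time''; the argument must genuinely use that Waiter cannot foresee which vertices become endpoints and so cannot aim his quota at the relevant $\Theta((N')^2)$ edges, together with Client's freedom to take a welding edge on sight. Making this work for \emph{arbitrary} $C$ --- whereas by Proposition~\ref{prp:star} we can, for $P_2$, only verify a Client win for $q<n-2$ --- while tracking, round by round and region by region, how much of each useful area Waiter has been allowed to occupy, and interleaving the sub-games rather than running them sequentially, is exactly the step we have not been able to carry out. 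Together with Theorem~\ref{thm:path}(i) such an argument would show that the critical bias for building a fixed path is super-linear in $n$.
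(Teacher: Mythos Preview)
The statement is a \emph{conjecture} in the paper, not a theorem: the paper offers no proof and explicitly introduces it with ``We did not find a matching lower bound for Theorems~\ref{thm:component}(i) and~\ref{thm:path}(i). We conjecture the following.'' Your proposal is consistent with this --- you correctly frame it as an open problem, sketch a doubling/welding heuristic that mirrors the exponent $2^k/(2^k-1)$ in Waiter's upper bound, and honestly identify the quantitative step you cannot carry out. There is therefore nothing to compare against; the paper contains no argument for this statement at all, and your write-up is a reasonable discussion of why the problem resists the natural inductive attack.
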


Bednarska-Bzd\c{e}ga proved in \cite{CP_weight} several criteria for Waiter's win in biased Client-Waiter games and as an application showed the 
following bounds on the critical bias in the Client-Waiter $ H $-game.
\begin{thm}[6.1 in \cite{CP_weight}]\label{thm:H-game on K_n}
	Let $ H $ be a graph with at least two edges. For every $ 0<\epsilon<1 $ and $ n $ large enough the following holds. If $ q\geq n^{1/m'(H)+\epsilon} $ then Waiter wins $ CW(K_n, H, q) $ while if $ q\leq n^{1/m''(H)-\epsilon} $ Client wins $ CW(K_n, H, q) $, where 
	\begin{align*}
		m'(H)&=\max_{H'\subseteq H:v_{H'}\geq 1}\dfrac{e_{H'}-1}{v_{H'}},\\
		m''(H)&=\max_{H'\subseteq H:v_{H'}\geq 3}\dfrac{e_{H'}+1}{v_{H'}-2}.
	\end{align*}
\end{thm}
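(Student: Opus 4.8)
The plan is to prove the two halves independently, each time reducing to a general weight-function criterion for Client-Waiter games (in the spirit of the criteria of \cite{CP_weight}) and then optimizing over subgraphs of $H$; the substantive part is the combinatorial bookkeeping --- counting, for each $H'\subseteq H$, the copies of $H'$ in $K_n$ or the extensions of a fixed partial copy, and locating the extremal $H'$.

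For Waiter's win when $q\ge n^{1/m'(H)+\epsilon}$, I would first pass to a single subgraph. Fix $H^*\subseteq H$ attaining the maximum in the definition of $m'(H)$; since $e_H\ge 2$ we have $m'(H)\ge 1/v_H>0$, hence $e_{H^*}\ge 2$ and $1/m'(H)=v_{H^*}/(e_{H^*}-1)$. As any copy of $H$ in Client's graph contains a copy of $H^*$, it suffices to show that Waiter wins $CW(K_n,H^*,q)$. For this I would invoke the Client-Waiter analogue of the Erd\H{o}s--Selfridge criterion (one of the Waiter-win criteria of \cite{CP_weight}): Waiter wins $CW(X,\cl{F},q)$ whenever $\sum_{A\in\cl{F}}(q+1)^{1-|A|}<1$, the shift by $1$ in the exponent (relative to the Maker-Breaker bound) reflecting that Client is forced to take exactly one offered element per round. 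Here the winning sets are the $\Theta(n^{v_{H^*}})$ copies of $H^*$, each of size $e_{H^*}$, so the left side has order $n^{v_{H^*}}(q+1)^{1-e_{H^*}}$; substituting $q\ge n^{v_{H^*}/(e_{H^*}-1)+\epsilon}$ makes this $O(n^{-\epsilon(e_{H^*}-1)})=o(1)$, hence $<1$ for $n$ large, and Waiter wins.

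For Client's win when $q\le n^{1/m''(H)-\epsilon}$, I would build a copy of $H$ directly by a staged extension strategy governed by a danger (potential) function. Order $V(H)$ so that $H$ is assembled from a $2$-vertex base by adding one vertex at a time, and have Client maintain throughout a large family of ``on-schedule'' partial embeddings together with a potential $\Phi$ that counts this family with a partial embedding occupying a copy of $H'\subseteq H$ weighted by a power of $q+1$ increasing in $e_{H'}$ (e.g.\ $(q+1)^{e_{H'}}$); in each round Client takes the offered edge whose selection keeps $\Phi$ largest. The two estimates that drive the argument are that a single edge claimed by Waiter lies in only $O(n^{v_{H'}-2})$ completions of a given partial copy of $H'$ into a copy of $H$ (two of its $v_{H'}$ vertices being already pinned), while Waiter claims $O(n^2)$ edges over the whole game, against a starting count of $\Theta(n^{v_H})$ copies. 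A careful version of this accounting shows the binding requirement, uniform over all $H'\subseteq H$ with $v_{H'}\ge 3$, to be $q^{\,e_{H'}+1}\le n^{\,v_{H'}-2}$, i.e.\ exactly $q\le n^{1/m''(H)}$ --- the surplus ``$+1$'' over the corresponding Maker-Breaker quantity being the price of Waiter choosing which edges to present. Under $q\le n^{1/m''(H)-\epsilon}$ this holds with a polynomial margin $n^{\epsilon(e_{H'}+1)}$, enough to absorb lower-order terms and keep $\Phi>0$ to the end, which forces a completed copy of $H$.

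The step I expect to be the main obstacle is the Client side: one must design the staged strategy and the weight function so that (a) in every round Client can find an offered edge whose selection does not let $\Phi$ drop too fast, and (b) the extremal subgraph in the accounting is exactly the one defining $m''(H)$. This needs the extension and pair-degree estimates carried through uniformly over all $H'$ with $v_{H'}\ge 3$, with the $\epsilon$-slack tracked to the end. By contrast, the Waiter side is essentially immediate from the Erd\H{o}s--Selfridge-type bound once the reduction to $H^*$ is in place.
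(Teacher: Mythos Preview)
This theorem is not proved in the present paper. It is quoted verbatim from Bednarska-Bzd\c{e}ga's paper \cite{CP_weight} (as Theorem~6.1 there) and is used here only as background, to motivate the improvement given in Proposition~\ref{prp:k-clique lower bound}. Consequently there is no proof in this paper to compare your proposal against.

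That said, your sketch is broadly in the right spirit. The Waiter side is essentially correct: the criterion $\sum_{A\in\cl{F}}(q+1)^{1-|A|}<1$ is the biased Client--Waiter analogue of Erd\H{o}s--Selfridge (indeed Corollary~1.4 of \cite{CP_weight}, quoted later in this paper for $q=1$, is the unbiased instance), and passing to the extremal $H^*\subseteq H$ realising $m'(H)$ together with the count $\Theta(n^{v_{H^*}})$ of copies gives the claimed bound immediately. For the Client side your outline is plausible but remains a sketch: the actual argument in \cite{CP_weight} proceeds via weight-function criteria developed there rather than a direct staged embedding, and making your potential-function approach rigorous would require carefully verifying that in \emph{every} round at least one offered edge keeps $\Phi$ from dropping too much --- this is exactly the step you flag as the main obstacle, and it is where the ``$+1$'' in $m''(H)$ must emerge from the analysis rather than be asserted.
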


Here we give an improvement of the lower bound using the hypergraph containers result of Saxton and Thomason (\cite{h_containers}). The idea for this proof was suggested to us by Bednarska-Bzd\c{e}ga.
\begin{prop}\label{prp:k-clique lower bound}
	For every graph $ H $ there is $ c>0 $ such that Client wins $ CW(K_n,H, q) $ whenever $ q\leq cn^{1/m_2(H)}/\ln n $.
\end{prop}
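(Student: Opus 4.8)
The plan is to combine the hypergraph container method (as suggested) with an Erd\H{o}s--Selfridge-type potential strategy for Client. Throughout we may assume $v_H\geq 3$, the case $v_H\leq 2$ being trivial since then Client wins with any bias by claiming a single edge.

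\textbf{Step 1: the containers.} I would apply the container theorem of Saxton and Thomason (\cite{h_containers}) to the hypergraph on vertex set $E(K_n)$ whose hyperedges are the edge sets of copies of $H$. Together with supersaturation (Erd\H{o}s--Simonovits) and the Erd\H{o}s--Stone--Simonovits theorem, this yields a constant $c_1=c_1(H)$ and a family $\mathcal{C}$ of subgraphs of $K_n$ (the \emph{containers}) with: (a) every $H$-free subgraph of $K_n$ is contained in some $D\in\mathcal{C}$; (b) $\log_2|\mathcal{C}|\leq c_1 n^{2-1/m_2(H)}\ln n$; and (c) $e(D)\leq \mathrm{ex}(n,H)+o(n^2)$ for every $D\in\mathcal{C}$, so in particular $|\bar D|\geq c_2 n^2$ for some $c_2=c_2(H)>0$, where $\bar D:=E(K_n)\setminus D$ (for non-bipartite $H$ this uses $\mathrm{ex}(n,H)\leq(1-\tfrac{1}{\chi(H)-1}+o(1))\binom{n}{2}$; for bipartite $H$, $\mathrm{ex}(n,H)=o(n^2)$ makes this immediate).

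\textbf{Step 2: the reduction and the potential.} The point is that Client wins the moment his graph $C$ is \emph{not} contained in any container, since then $C$ is not $H$-free. So it suffices for Client to claim, for every $D\in\mathcal{C}$, at least one edge of $\bar D$. I would have Client play greedily to minimise the potential
$$\Phi(C,F)\;=\;\sum_{\substack{D\in\mathcal{C}\\ C\subseteq D}}\mu^{\,|F\cap\bar D|},$$
where $C$ is Client's current graph, $F$ the set of currently unclaimed edges, and $\mu:=q/(q+1)$. Each not-yet-escaped container $D$ is weighted by the number of remaining ``escape edges''. By (b), (c), and $\ln(1-\tfrac{1}{q+1})\leq-\tfrac{1}{q+1}$,
$$\Phi(\emptyset,E(K_n))\;\leq\;|\mathcal{C}|\cdot\mu^{c_2 n^2}\;\leq\;\exp\!\left(c_1 n^{2-1/m_2(H)}\ln n-\tfrac{c_2 n^2}{q+1}\right),$$
which is $<1$ as soon as $q+1<(c_2/c_1)\,n^{1/m_2(H)}/\ln n$; choosing $c:=c_2/(2c_1)$ this is guaranteed by the hypothesis $q\leq c\,n^{1/m_2(H)}/\ln n$ for $n$ large (and small $n$ are absorbed by further shrinking $c$).

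\textbf{Step 3: Client keeps $\Phi$ non-increasing.} When Waiter offers a set $S$ with $|S|=s\leq q+1$, write $t_D:=|S\cap\bar D|$. A direct computation gives $\sum_{e\in S}\Phi(C\cup\{e\},F\setminus S)=\sum_{D:\,C\subseteq D}(s-t_D)\,\mu^{|F\cap\bar D|-t_D}$, where the factor $(s-t_D)=|S\cap D|$ counts the choices of $e$ that keep $D$ active. Hence some $e\in S$ satisfies
$$\Phi(C\cup\{e\},F\setminus S)\;\leq\;\sum_{D:\,C\subseteq D}\frac{s-t_D}{s}\,\mu^{-t_D}\cdot\mu^{|F\cap\bar D|}\;\leq\;\Phi(C,F),$$
the last inequality because $1-t_D/s\leq 1-t_D(1-\mu)\leq\mu^{t_D}$ by Bernoulli's inequality (valid since $1-\mu=\tfrac{1}{q+1}\leq\tfrac1s$). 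Client plays this $e$ every turn, so $\Phi$ never exceeds $\Phi(\emptyset,E(K_n))<1$. At the end of the game $F=\emptyset$, so $\Phi=\#\{D\in\mathcal{C}:C\subseteq D\}$, which being a non-negative integer smaller than $1$ is $0$; thus $C$ lies in no container, so Client has built a copy of $H$.

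\textbf{Main obstacle.} The game-theoretic part is a routine potential argument, and the monotonicity rule (Waiter's offer size $s$ may be anything up to $q+1$) is precisely what makes $1-t_D/s\leq\mu^{t_D}$ hold. The real work is Step 1: extracting from the container machinery a family of containers that \emph{simultaneously} has only $\exp(O(n^{2-1/m_2(H)}\ln n))$ members — with a single logarithmic factor, which is exactly the $\ln n$ lost in the statement — and is such that every container misses $\Omega(n^2)$ edges of $K_n$, uniformly; this also requires treating bipartite and non-bipartite $H$ slightly differently, as indicated above.
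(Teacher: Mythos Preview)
Your proof is correct and follows essentially the same route as the paper: hypergraph containers from Saxton--Thomason to cover all $H$-free graphs by few containers each missing $\Omega(n^2)$ edges, followed by a transversal argument showing Client can hit every complement $\bar D$. The only cosmetic difference is that the paper invokes its black-box transversal criterion (Theorem~\ref{thm:tool_transversal}, with weight $e^{-|A|/(q+1)}$) directly, whereas your Step~3 unpacks the equivalent potential argument by hand with base $\mu=q/(q+1)$; the calculation $\tfrac{s-t_D}{s}\leq \mu^{t_D}$ via $s\leq q+1$ and Bernoulli is exactly the content of that criterion.
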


In our treatment of the Client-Waiter games on the random graph we are very much influenced by the proofs of Nenadov, Steger and Stojakovi\'c (\cite{MBH}) for the unbiased Maker-Breaker $ H $-game played on random graphs. We show that the results in \cite{MBH} are also true in the Client-Waiter game. Moreover, we extend these results for any fixed bias and show that the case of $ H=K_3 $, which was an exception in the unbiased game, is no longer such when $ q\geq 2$.

The \emph{density} of a graph $ G $ is defined to be $ d(G)=e_G/v_G $, and its \emph{2-density} to be $ d_2(G)=(e_G-1)/(v_G-2) $ or 0 if $ v_G\leq 2 $. The \emph{maximum density} of $ G $ is $ m(G)=\max\limits_{G'\subseteq H} d(G') $, and its \emph{maximum 2-density} is $ m_2(G)=\max\limits_{G'\subseteq H} d_2(H') $. We say that a graph $ G $ is \emph{balanced} (resp. \emph{2-balanced}) if $ d(G)=m(G) $ (resp. $ d_2(G)=m_2(G) $). If for any proper subgraph $ G'\subset G $ we have $ d(G')<m(G) $ (resp. $ d_2(G')<m_2(G) $) then $ G $ is \emph{strictly balanced} (\emph{strictly 2-balanced}).

\begin{thm}\label{thm:CW_random_H_biased}
	Let $ H $ be a graph which is not a forest. If either $ q\geq 2 $ or there exists $ H'\subseteq H $ such that $ d_2(H')=m_2(H) $, $ H' $ is strictly 2-balanced and it is not a triangle, then there exist constants $ c,C>0 $ which depend only on $ H $ and $ q $ such that in the Client-Waiter game $ CW(G_{n,p},H,q) $ 
	\[ \lim\limits_{n\to\infty} \Pr[\text{Client wins}]=\begin{cases}
	1, &p\geq Cn^{-1/m_2(H)}, \\
	0, &p\leq cn^{-1/m_2(H)}. \\
	\end{cases} \]
\end{thm}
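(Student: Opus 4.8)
The plan is to handle the $1$-statement (Client wins when $p \geq Cn^{-1/m_2(H)}$) and the $0$-statement (Client loses when $p \leq cn^{-1/m_2(H)}$) separately, following the strategy of Nenadov, Steger and Stojakovi\'c but adapting to the biased Client-Waiter setting. For the $0$-statement I would give Waiter an explicit strategy. The key point is that when $p \leq cn^{-1/m_2(H)}$ with $c$ small, $G_{n,p}$ w.h.p. contains only ``few'' copies of the densest subgraph $H' \subseteq H$ achieving $m_2(H') = m_2(H)$, and moreover these copies are ``sparsely scattered'': a standard second moment / deletion argument shows that w.h.p. every small vertex set spans few copies of $H'$, and copies of $H'$ overlap in a tree-like fashion. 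Waiter can then use a pairing-type or weight-function strategy (in the spirit of Bednarska-Bzd\c{e}ga's criteria, Theorem \ref{thm:H-game on K_n}, or more directly an Erd\H{o}s--Selfridge style potential over copies of $H'$) to ensure Client never completes a copy of $H'$, hence never a copy of $H$. Since each turn Waiter may offer as few as one edge, the monotonicity rule is no obstacle here; the bias only helps Waiter.

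For the $1$-statement I would let Client build a copy of $H$ greedily, exploiting that when $p \geq Cn^{-1/m_2(H)}$ the random graph is ``$H$-dense'' in a robust, spread-out sense. The natural tool is a hypergraph-container or counting argument showing that $G_{n,p}$ w.h.p. contains many edge-disjoint (or near-disjoint) copies of $H$, arranged so that Client can always respond to Waiter's offer of at most $q+1$ edges by claiming an edge that extends a partially-built copy of $H$. More concretely, I would take the approach in \cite{MBH}: iteratively build $H$ vertex by vertex (or one ``last edge'' at a time over many disjoint gadgets), using the fact that each partially built structure has $\Omega(1)$ many completions in $G_{n,p}$ with high probability, so that even after Waiter claims $q$ out of every $q+1$ offered edges, Client retains a usable completion by a simple counting/pigeonhole argument. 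The biased case just requires that the number of available completions exceeds $q+1$ times the number of ``threats'' Waiter can realize, which holds by choosing $C = C(H,q)$ large enough.

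The extra cases in the hypothesis ($q \geq 2$, or the existence of a strictly $2$-balanced densest subgraph that is not a triangle) enter only in the $1$-statement, and only to rule out the triangle obstruction from the unbiased Maker--Breaker analysis. When $H = K_3$ and $q = 1$, building a triangle is genuinely hard because each triangle shares edges with many others and Waiter can block efficiently; but with bias $q \geq 2$ Waiter offers Client at least one edge and keeps at most $q$, yet Client gets to pick strategically among up to $q+1$ — the point is that in the Client-Waiter game the roles relative to the ``danger'' hypergraph are reversed compared to Maker-Breaker, so a larger bias is actually favorable to Client's opponent only in number of claimed edges, and I would show the triangle-packing density in $G_{n,p}$ at $p = Cn^{-1/2}$ is large enough to absorb this. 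If instead $H$ has a non-triangle strictly $2$-balanced densest subgraph $H'$, I would have Client target $H'$ directly, using that strictly $2$-balanced non-triangle graphs admit the same greedy extension argument as in \cite{MBH} even for $q = 1$.

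The main obstacle I anticipate is the $1$-statement at small $p$: proving that Client can always find a usable completion requires controlling, uniformly over the history of the game, how many copies of $H$ through a given partial structure have already been ``damaged'' by Waiter's claimed edges. This is exactly where a quantitative container-type statement or a careful union bound over all possible game transcripts is needed, and getting the constants $c$ and $C$ to line up — so that the number of surviving completions stays positive throughout all $\Theta(e(H))$ rounds of building a single copy — is the delicate part. I would isolate this into a lemma asserting that w.h.p. $G_{n,p}$ has the property that every subgraph obtained by deleting at most a $(1 - 1/(q+1))$-fraction of edges from a suitable sub-collection still contains a copy of $H$, and then run Client's greedy strategy against it.
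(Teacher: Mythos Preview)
Your proposal has the role of the hypothesis reversed, and both halves of the argument have genuine gaps.

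\textbf{The hypothesis is needed for the $0$-statement, not the $1$-statement.} When $H=K_3$ and $q=1$, the obstruction is on \emph{Waiter's} side: Client already wins as soon as $G_{n,p}$ contains a copy of $K_5-e$ (Theorem~\ref{thm:K_3 hitting time}), and this happens at $p\sim n^{-5/9}\ll n^{-1/2}$. So at $p=cn^{-1/2}$ Client \emph{does} win, and the $0$-statement fails there. The paper's $1$-statement proof is uniform in $H$ and $q$; the case distinction enters only when proving that Waiter can win below the threshold.

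\textbf{The $1$-statement: greedy extension does not fit the Client--Waiter model.} You write that ``Client can always respond to Waiter's offer \dots\ by claiming an edge that extends a partially-built copy of $H$'', but in this game Waiter chooses which $\leq q+1$ edges to offer, and nothing forces her to offer any edge touching Client's partial structure. A resilience lemma of the form ``every subgraph with a $1/(q+1)$-fraction of the edges contains $H$'' is also false as stated: Waiter can steer Client into a bipartite graph. The paper instead uses Theorem~\ref{thm:tool_transversal} to guarantee that Client's graph is $(\epsilon,dp)$-lower-regular between every pair of parts in a fixed $k$-partition, and then invokes a K\L{}R-type theorem (Theorem~\ref{thm:2.1}) to conclude that any such graph contains $H$. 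This sidesteps the issue of which specific edges Client gets.

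\textbf{The $0$-statement: an Erd\H{o}s--Selfridge potential will not close.} At $p=cn^{-1/m_2(H)}$ the expected number of copies of $H$ in $G_{n,p}$ is polynomial in $n$, so $\sum_{A}2^{-|A|}$ is huge and the Bednarska-Bzd\c{e}ga criterion does not apply globally. Your remark that copies ``overlap in a tree-like fashion'' is the right intuition, but the paper makes this precise via the $H$-core: one iteratively strips triangles/copies that Waiter can neutralise locally, proves (Lemma~23 of \cite{MBH}, and a bespoke analogue for $K_3$ with $q\geq 2$) that every bi-connected component of the surviving core has \emph{bounded} size, and then shows (Proposition~\ref{prp:random_waiter_criterion} / Lemma~\ref{lem: Waiter_winning_riterion}) that Waiter wins on any bounded graph with $m(G)\leq m_2(H)$. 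The reduction to bounded components is the step your outline is missing.
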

	Note: the same is true for the corresponding Maker-Breaker game. 
	
For the case $ H=K_3 $ and $ q=1 $, which is missing in the above theorem, we have the following theorem, whose proof is much the same as Theorem 1.3 of \cite{MBClique} for the corresponding Maker-Breaker game and is therefore not included here.
\begin{thm}\label{thm:K_3 hitting time} For every $ p=p(n) $,
\[ \lim\limits_{n\to\infty}\Pr[\text{Client wins } CW(G_{n,p},K_3,1)]=\lim\limits_{n\to\infty}\Pr[G_{n,p}\text{ contains } K_5-e]. \] 
\end{thm}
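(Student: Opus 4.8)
The plan is to show that, for a board drawn from $G_{n,p}$, the outcome of $CW(G_{n,p},K_3,1)$ is w.h.p.\ decided by a single local obstruction — a copy of $K_5-e$ in the board — and then to read off the limiting distribution of ``$G_{n,p}\supseteq K_5-e$'' from standard random‑graph theory. Concretely I would reduce the theorem to the two assertions
\begin{enumerate}[label=\normalfont(\roman*)]
	\item if the board contains a copy of $K_5-e$ then Client wins; and
	\item $\Pr[\,G_{n,p}\text{ is }K_5-e\text{-free and Client wins}\,]\to 0$, i.e.\ for a $K_5-e$-free board Waiter w.h.p.\ wins.
\end{enumerate}
Since $K_5-e$ is strictly balanced with $m(K_5-e)=9/5$, its appearance threshold is $n^{-5/9}$ and at $p=\Theta(n^{-5/9})$ the number of copies is asymptotically Poisson, so $\Pr[G_{n,p}\supseteq K_5-e]$ converges (to $0$, to $1$, or to a value in $(0,1)$ according to the regime of $p$). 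Writing $\Pr[\text{Client wins}]=\Pr[G_{n,p}\supseteq K_5-e]+\Pr[\,K_5-e\text{-free and Client wins}\,]$ and using (i) and (ii), the second summand vanishes and the claimed asymptotic equality follows.

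For (i) there are two ingredients. First, Client wins the finite game $CW(K_5-e,K_3,1)$; I would verify this by a short case analysis on Waiter's offers, the point being that Client can force a \emph{double threat} — playing towards a star at one of the three degree‑$4$ vertices of $K_5-e$ (together with the extra flexibility supplied by the two degree‑$3$ vertices) he reaches a position with at least two distinct triangle‑completing edges simultaneously free, and Waiter cannot claim them both in one move. Second, Client‑Waiter games are monotone in the board \emph{for Client}: given a winning Client strategy on a sub‑board $B$, Client plays on any $B'\supseteq B$ by simulating the $B$‑game and ignoring edges outside $B$; here the monotonicity rule is essential, because when Waiter offers a pair straddling $B$ and $B'\setminus B$, Client reinterprets it in the sub‑game as a \emph{singleton} offer inside $B$, which is a legal Waiter move only thanks to that rule. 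Combining the two, any board containing a copy of $K_5-e$ is a Client win, so $\Pr[\text{Client wins}]\ge\Pr[G_{n,p}\supseteq K_5-e]$ deterministically.

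The substance is in (ii). The structural input is that a $K_5-e$-free graph is ``triangle‑locally simple'': for any edge $uv$, the common neighbourhood $N(u)\cap N(v)$ induces a matching — a vertex of degree $2$ there, or a triangle there, together with $u,v$ yields a copy of $K_5-e$ — so the triangles through $uv$ organize into pairwise edge‑disjoint $K_4$'s plus a ``book'' of triangles sharing only the spine $uv$; moreover every triangle lies in at most one $K_4$ and distinct $K_4$'s share at most an edge. For $G_{n,p}$ near the threshold one additionally has, w.h.p., bounded codegrees and pairwise edge‑disjoint $K_4$'s, so the triangle hypergraph is essentially a disjoint union of bounded local pieces. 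On this I would build an adaptive Waiter strategy: Waiter keeps one step ahead by (a) pre‑emptively claiming, when offered, any edge lying in many still‑live triangles, and otherwise (b) offering, from a triangle Client is about to complete, its two remaining free edges as a pair; one then checks that $K_5-e$-freeness forces every triangle to acquire a Waiter edge before Client can close it.

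The main obstacle, exactly as in the Maker–Breaker analysis of \cite{MBClique} that this argument mirrors, will be the \emph{endgame} of (ii): once only a few free edges remain, a careless Waiter can be forced to offer (even as a singleton) an edge that completes a Client triangle, so the strategy must be arranged so that by that stage every triangle already contains a Waiter edge, and getting the bookkeeping right across the interaction of the local pieces (a triangle is a page of a book along one of its edges while simultaneously taking part in structures along its other two edges) is the delicate point. I would finally remark that this $q=1$, $H=K_3$ case is genuinely exceptional — it is the one excluded from Theorem~\ref{thm:CW_random_H_biased} — precisely because here the governing density is $m(K_5-e)=9/5$ rather than $m_2(K_3)=2$; for $q\ge 2$, or for any $H$ that is not a triangle with a strictly $2$‑balanced densest subgraph that is itself a triangle, the $m_2$‑threshold takes over and the phenomenon of this theorem disappears.
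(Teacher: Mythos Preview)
The paper does not actually include a proof of this theorem; it states only that the proof ``is much the same as Theorem 1.3 of \cite{MBClique} for the corresponding Maker--Breaker game and is therefore not included here.'' Your outline is precisely an adaptation of that Maker--Breaker argument to the Client--Waiter setting (as you yourself note), so your proposal is in line with the paper's intended, but omitted, proof.
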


Lastly, we prove that for trees Client can win even when $ p\ll n^{-1} $.
\begin{prop}\label{prp:CW_random_tree}
	For every positive integers $ k,q $  w.h.p. Client has a strategy to win $ CW(G_{n,p},T_{k,k},q) $, where $ T_{k,k} $ is the complete $ k $-ary tree of height $ k $, and $ p= n^{-1-(k(q+1))^{-2(k+1)}} $.\\ 
\end{prop}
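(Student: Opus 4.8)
The plan is to produce, inside $G_{n,p}$, a single fixed tree $F$ of bounded size with the property that Client can force a copy of $T_{k,k}$ in the Client--Waiter game played on $E(F)$ alone, and then to check that this $F$ is sparse enough to appear w.h.p. For the probabilistic side, note that $p=n^{-1-\delta}$ with $\delta=\bigl(k(q+1)\bigr)^{-2(k+1)}$, so $1/\delta=\bigl(k(q+1)\bigr)^{2(k+1)}$. If $F$ is any fixed tree with $v_F<1/\delta$ vertices then $\delta(v_F-1)<1$, and — since a tree is strictly balanced, with densest subgraph itself — the standard appearance-of-subgraphs result gives $\mathbb{E}\bigl[\#\{\text{copies of }F\}\bigr]=\Theta\bigl(n^{v_F}p^{\,v_F-1}\bigr)=\Theta\bigl(n^{\,1-\delta(v_F-1)}\bigr)\to\infty$, with a routine second moment estimate (valid because $F$ has bounded size) upgrading this to ``w.h.p. $G_{n,p}\supseteq F$''. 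So the whole proposition reduces to finding such an $F$ on which Client wins, with $v_F<\bigl(k(q+1)\bigr)^{2(k+1)}$.

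\textbf{From $G_{n,p}$ to $E(F)$.}
Next I would record the reduction: if Client wins $CW(E(F),T_{k,k},q)$ and $G_{n,p}\supseteq F$, then Client wins $CW(G_{n,p},T_{k,k},q)$. He fixes a copy of $F$ and plays his $F$-strategy, ignoring edges outside $F$: when Waiter offers a set $S$ with $S\cap E(F)\ne\varnothing$ he claims the edge his $F$-strategy returns on input $S\cap E(F)$, and when $S\cap E(F)=\varnothing$ he claims an arbitrary edge of $S$. The monotonicity rule (Waiter may offer any number of edges between $1$ and $q+1$) is exactly what makes each such turn a legal move ``offer $S\cap E(F)$'' of the game on $E(F)$; Client's replies are legal there and Waiter's other claims agree on $E(F)$, so Client's $F$-strategy runs faithfully and yields a copy of $T_{k,k}\subseteq F\subseteq G_{n,p}$.

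\textbf{The game on $F$ (the crux).}
It remains to build an $F$ with $v_F<\bigl(k(q+1)\bigr)^{2(k+1)}$ on which Client wins $CW(E(F),T_{k,k},q)$; this is the heart of the argument. The natural candidate is a rooted tree of depth $k$ whose branching factor at each level is a fixed polynomial in $k(q+1)$ — e.g.\ a complete $\bigl(k(q+1)\bigr)^{2}$-ary tree of height $k$, which has $\frac{(k(q+1))^{2(k+1)}-1}{(k(q+1))^{2}-1}<(k(q+1))^{2(k+1)}$ vertices. Client builds $T_{k,k}$ level by level: having embedded a vertex $v$, he must claim edges from $v$ to $k$ of its many children, each of which he must in turn be able to develop into a suitable subtree $T_{k,\cdot}$. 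At a single vertex this is a star game solved by the counting behind Proposition~\ref{prp:star} (branching $k(q+1)$ already suffices there), so the surplus branching is what must absorb Waiter's interference across different parts of the tree. The main obstacle — and the step that needs the real work — is to orchestrate these star sub-games and bound that interference: Client can push forward (and defend) only one sub-game per turn, while Waiter's batch can nibble at the edges of up to $q+1$ of them, so one has to design the priorities (e.g.\ always attend to the sub-game closest to failing, while banking edges that will be needed later), assign to each active sub-game a potential measuring its remaining slack, and show by a level-by-level bookkeeping that the polynomial surplus in the branching — and correspondingly the exponent $2(k+1)$ in $p$ — is enough to keep every sub-game Client needs above zero slack until it is completed. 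With such an $F$ in hand, the three steps combine to prove the proposition.
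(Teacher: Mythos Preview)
Your framework is exactly the paper's: pick the complete $m$-ary tree $T=T_{m,k}$ of height $k$ with $m=(k(q+1))^2$, show it appears w.h.p.\ in $G_{n,p}$ at the stated $p$, and prove Client wins $CW(E(T),T_{k,k},q)$. Your probabilistic reduction and the count $v_T<(k(q+1))^{2(k+1)}$ are fine and agree with the paper.

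Where you diverge is precisely at the ``crux'' you flag. You propose to organise Client's play as a system of per-vertex star sub-games with priorities and a potential bookkeeping to absorb Waiter's cross-interference; you acknowledge this is the real work and leave it as a sketch. The paper bypasses this entirely by recasting the problem as a transversal game and invoking Theorem~\ref{thm:tool_transversal}. For each internal vertex $x$ of $T$ let $A_x$ be the $m$ edges from $x$ toward its children, and take
\[
\mathcal{F}=\bigl\{F\subseteq A_x:\ |F|=m-k+1,\ x\text{ internal}\bigr\}.
\]
If Client wins $CW(E(T),\mathcal{F}^*,q)$ then Waiter never fully claims any such $F$, hence Client keeps at least $k$ out-edges at \emph{every} internal vertex simultaneously, which already contains a copy of $T_{k,k}$. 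The criterion of Theorem~\ref{thm:tool_transversal} then reduces everything to checking
\[
\sum_{A\in\mathcal{F}}e^{-|A|/(q+1)}\ \le\ \frac{m^k-1}{m-1}\binom{m}{k-1}e^{-(m-k+1)/(q+1)}<1,
\]
which is an elementary inequality for $m=(k(q+1))^2$. No per-turn strategy design, no potential function, no level-by-level induction.

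So your proposal is not wrong, but the step you left open is the whole proof, and your suggested route (priority scheduling among star sub-games) would need a genuine argument to control the global interference. The observation you are missing is that ``Client secures $\ge k$ out-edges at every internal vertex'' is literally ``Client hits every set in $\mathcal{F}$'', and the paper's transversal tool turns that into a one-line computation.
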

	Note: the same is true for the corresponding Maker-Breaker game.

\subsection{Related results}
In a groundbreaking paper (\cite{ER60}), Erd\H{o}s and R\'{e}nyi showed that the random graph $ G_{n,p} $ goes through a phase transition around $ p=1/n $, from typically having only connected components of size at most logarithmic in $ n $ to having a linear sized (`giant') component. Later Ajtai, Koml\'os and Szemer\'edi (\cite{AKS81}) proved that in the supercritical regime $ p=(1+\epsilon)/n $, $ G_{n,p} $ will typically contain not only a giant component, but a linearly-long path (see also \cite{DFS} for a simple proof).
Theorems~\ref{thm:component}(ii)+(iii) and~\ref{thm:path}(ii)+(iii) show that a similar transition takes place in the Client-Waiter game when $ q $ is between $ n/2 $ and $ 1.6n $ for components and between $ n/2 $ and $ n $ for paths. Moreover, when $ q=n $ Waiter can even limit Client's longest path to be of order $ \ln\ln n $ which indicates some weakness of Client in comparison to the probabilistic intuition. A similar Client's weakness can be observed in Theorem~\ref{thm:path}(i) where we would expect Client to be able to achieve a path of length $ 2^k $ while we prove that he cannot hope to get more than $ 2k $.\\


In \cite{PC_H} Bednarska-Bzd\c{e}ga, Hefetz, and \L{}uczak showed that for any tree $ T_k $ on $ k $ vertices the critical bias for $ WC(K_n, T_k, q) $ is $ \Theta (n^{k/(k-1)}) $ and that the critical bias for $ WC(K_n,K_k, q) $ is $ \Theta(n^{2/(k-1)}) $ and conjectured that for any graph $ H $ the critical bias for $ WC(K_n, H, q) $ is $ \Theta(n^{1/m(H)}) $. Notice that this is in compliance with the random intuition as the probability threshold for $ G_{n,p} $ to contain a copy of $ H $ is $ n^{-1/m(H)} $.
On the other hand, in the Client-Waiter game with bias of order $ n^{k/(k-1)} $ Client will not be able to build a star graph with three vertices (by Proposition~\ref{prp:star}) and he will not be able to build a path of length $ 3\log_2 k $ (by Theorem~\ref{thm:path}(i)). This shows that in these games Client is also weak compared to Waiter in the corresponding Waiter-Client game. It is plausible that the upper bound in Theorem~\ref{thm:H-game on K_n} can be improved to $ \Theta (n^{1/m(H)}) $ as well.\\

We mention here three results of similar games for the Maker-Breaker variation, though their bounds are not comparable to ours. The first is by Beck (\cite{BECK97}), it states that Maker can build a cycle of length at least $ (1-e^{-1/200\epsilon})n $ when playing on $ K_n $ against Breaker with a bias of $ \epsilon n $ (while on the other hand Breaker can isolate at least $ \tfrac{\epsilon}{2}e^{-1/\epsilon}n $ vertices in Maker's graph). \\
The second is due to Bednarska-Bzd\c{e}ga and \L{}uczak (\cite{BL01}), who showed in particular that when $ q= (1+\epsilon)n $ then Breaker can prevent a component of size larger than $ 1/\epsilon $, while if $ q=(1-\epsilon)n $ then Maker can build a component of size at least $ \epsilon n $. \\
Yet in another paper of Bednarska-Bzd\c{e}ga and \L{}uczak (\cite{BiasedMB}), they proved that the critical bias for the Maker-Breaker $ H $-game is $ \Theta(n^{1/m_2(H)}) $.

\subsection{Preliminaries}
For the sake of simplicity and clarity of presentation, we do not make a particular effort to optimize
the constants obtained in some of our proofs. Our graph-theoretic notation is standard and follows those in \cite{W01}. In particular, we use the following.

For a graph $ G $, let $ V(G) $ and $ E(G) $ denote its sets of vertices and edges, respectively, and let $ v_G =|V(G)| $ and $ e_G = |E(G)| $. For two sets $ A,B \subseteq V (G) $, let $ E_G(A) $ denote the set of edges of $ G $ with both endpoints in $ A $ and let $ e_G(A) = |E_G(A)| $. Let $ E_G(A,B) $ denote the set of edges of $ G $ with one endpoint in $ A $ and the other endpoint in $ B $ (formally, $ E_G(A,B)=\{e\in E(A\cup B): e\cap A\neq\emptyset, e\cap B\neq\emptyset\}  $), and let $ e_G(A,B) = |E_G(A,B)| $. Notice that if $ A\subseteq B $ then $ E_G(A,B)=E_G(A)+E_G(A,B\backslash A) $. For a set $ S \subseteq V (G)$, let $  G[S] $ denote the subgraph of $ G $ induced by the set $ S $, and $ N_G(S) = \{v \in V (G) \backslash S: \exists u \in S \text{ such that } (uv) \in E(G)\}$ denotes the external neighbourhood of $ S $ in $ G $. For a vertex $ u \in V (G) $ we abbreviate $ N_G(\{u\}) $ under $ N_G(u) $ and let $ d_G(u) = |N_G(u)| $ denote the degree of $ u $ in $ G $. Often, when there is no risk of confusion, we omit the subscript $ G $ from the notation above. The \emph{maximum degree} of a graph $ G $ is $ \Delta(G) := \max\{d_G(u) : u \in V(G)\} $ and the \emph{minimum degree } of a graph $ G $ is $ \delta(G) := \min\{d_G(u) : u \in V (G)\} $.

For a family $ \cl{F} $ of subsets of $ X $, we define the \emph{transversal family} of $ \cl{F} $ to be $ \cl{F}^* = \{A \subseteq X : A \cap B\neq\emptyset \text{ for every } B \in \cl{F}\} $.

Assume that some Client-Waiter game, played on the edge set graph $ G = (V,E) $, is in progress. At any given moment during this game, let $ E_C, E_W, E_F $ denote the set of edges that were claimed by Client, resp. Waiter, resp. unclaimed (free) up to that moment. We denote their respective sizes by $ e_C=|E_C| $, $ e_W=|E_W| $, $ e_F=|E_F| $. If $ A,B\subseteq V $ are two sets then $ E_C(A) $ is the set of Client's edges inside $ A $, $ E_C(A,B) $ is the set of Client's edges with one end in $ A $ and the other in $ B $, and $ e_C(A)=|E_C(A)| $ and $ e_C(A,B)=|E_C(A,B)| $. Similarly we define $ E_W(A), e_W(A), E_F(A) $, etc.

The rest of this paper is organized as follows: in Section~\ref{sec:tools} we quote two useful criteria for Client's win in Client-Waiter and Waiter-Client games, and we state and prove a result of our own which is of independent interest. In Section~\ref{sec:games_on_K_n} we discuss games on the complete graphs and prove Proposition~\ref{prp:star}, Theorems~\ref{thm:component} and~\ref{thm:path}, and Proposition~\ref{prp:k-clique lower bound}. In Section~\ref{sec:H-game_on_G_n,p} we discuss games on $ G_{n,p} $ and give the proofs of Theorem~\ref{thm:CW_random_H_biased} and Proposition~\ref{prp:CW_random_tree}. Section~\ref{sec:discussion} is devoted to concluding remarks.

\section{Game-Theoretic Tools}\label{sec:tools}
In this section we present several general criteria for the existence of a winning strategy for Client in a Client-Waiter or Waiter-Client game.
\begin{thm}[implicit in \cite{BECK08}]\label{thm:tool_ES}
	Let $ q $ be a positive integer, let $ X $ be a finite set, let $ \mathcal{F} $ be a family of subsets of $ X $. If
	\[ \sum_{A\in\mathcal{F}}(q+1)^{-|A|}<1 \]
	then Client has a winning strategy in the $ WC(X,\cl{F},q) $ game.
\end{thm}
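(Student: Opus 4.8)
The plan is to run a weight-function argument of Erd\H{o}s--Selfridge type (\cite{ES}, and see \cite{BECK08}), giving Client an explicit ``danger-minimising'' strategy. Call a set $ A\in\cl{F} $ \emph{alive} at a given moment if $ A\cap E_W=\emptyset $, i.e.\ Waiter has not yet claimed any element of $ A $; once a set fails to be alive it stays that way forever, and it can never be contained in $ E_C $. To the current position associate the potential
\[ \Phi=\sum_{A\in\cl{F}:\,A\cap E_W=\emptyset}(q+1)^{-|A\cap E_F|}. \]
At the start $ E_W=\emptyset $ and $ E_F=X $, so $ \Phi=\sum_{A\in\cl{F}}(q+1)^{-|A|}<1 $ by hypothesis. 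I would exhibit a strategy for Client under which $ \Phi $ never exceeds this initial value. This is enough: when the game ends every element is claimed, so if some $ A\in\cl{F} $ were still alive then $ A\subseteq E_C $, hence $ |A\cap E_F|=0 $ and $ A $ alone contributes $ 1 $ to $ \Phi $, forcing $ \Phi\ge1 $ and contradicting $ \Phi<1 $. Thus every $ A\in\cl{F} $ satisfies $ A\cap E_W\neq\emptyset $, so no winning set is contained in $ E_C $, and Client wins.

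Next I would describe Client's move and track the change in $ \Phi $. Suppose in a regular turn Waiter offers a set $ S $ of $ q+1 $ free elements. For $ e\in S $ put $ p_e=\sum_{A\ \text{alive},\ S\cap A=\{e\}}(q+1)^{-|A\cap E_F|} $, and let Client claim an element $ c\in S $ for which $ p_c $ is smallest. After Client takes $ c $ and Waiter takes the remaining $ q $ elements of $ S $, an alive set $ A $ changes as follows: if $ S\cap A=\emptyset $ its contribution is unchanged; if $ S\cap A\neq\emptyset $ but $ S\cap A\neq\{c\} $ then $ A $ acquires a Waiter-element and dies, contributing $ 0 $; and if $ S\cap A=\{c\} $ then $ A $ remains alive, loses exactly the free element $ c $, so its weight is multiplied by $ q+1 $. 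No set becomes alive that was not alive before. Writing $ P_0, P_1, P_{\ge2} $ for the total weight of the alive sets $ A $ with $ |S\cap A| $ equal to $ 0 $, $ 1 $, and at least $ 2 $ respectively, we have $ \Phi=P_0+P_1+P_{\ge2} $ and $ P_1=\sum_{e\in S}p_e $, while the new potential is exactly $ \Phi'=P_0+(q+1)p_c $. Since $ |S|=q+1 $, averaging gives $ p_c=\min_{e\in S}p_e\le P_1/(q+1) $, and hence $ \Phi'\le P_0+P_1\le\Phi $.

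It then remains only to handle the last turn, which by the rules of the Waiter--Client game may offer fewer than $ q+1 $ free elements, all of which are claimed by Waiter. In that case every alive set meeting this final batch dies and all other sets are unchanged, so $ \Phi $ again does not increase. Therefore $ \Phi\le\sum_{A\in\cl{F}}(q+1)^{-|A|}<1 $ throughout the game, and the observation of the first paragraph completes the argument. The only points requiring care are the exact bookkeeping of how a single turn affects each alive set --- in particular that any set acquiring a Waiter-element must henceforth count as contributing $ 0 $ --- and the averaging step that supplies Client with a suitable choice $ c $; I do not expect either to present a genuine obstacle.
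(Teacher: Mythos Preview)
The paper does not actually prove this theorem: it is stated as a known tool, ``implicit in \cite{BECK08}'', and no proof is given. Your argument is the standard Erd\H{o}s--Selfridge potential method adapted to the biased Waiter--Client setting, and it is correct as written; the bookkeeping for each turn and the averaging step are both sound, and the handling of the short final turn matches the rules of the $WC$ game as described in the paper.
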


Theorem~\ref{thm:tool_ES} gives a criterion for Client to \emph{avoid} the family $ \cl{F} $, which can sometimes be helpful in showing that his claimed subset has some desirable property (see for example the proofs of Theorems~\ref{thm:component}(iii) and \ref{thm:path}(iii) in Section \ref{proof: path_comp_lb}). However, recall that in the Client-Waiter game we introduced a monotonicity rule which allows Waiter to offer less than $ q+1 $ elements in a turn. This difference between the games prevents us from using Theorem~\ref{thm:tool_ES}. The next theorem provides a workaround.
\begin{thm}\label{thm:tool_ES_subset}
	Let $ q $ be a positive integer, let $ X $ be a finite set, let $ \mathcal{F} $ be a family of subsets of $ X $ and let $ \Phi(\mathcal{F})=\sum_{A\in\mathcal{F}} (q+1)^{-|A|} $. Then, playing a Client-Waiter game on $ X $ with bias $ q $, Client has a strategy to claim a set $ X_C\subseteq X $  of size $ |X_C|\geq \lfloor |X|/(q+1)\rfloor $ which fully contains at most $ 2\Phi(\cl{F}) $ sets of $ \mathcal{F} $.
\end{thm}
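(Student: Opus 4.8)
The plan is an Erdős–Selfridge type potential argument in which Client is allowed to \emph{discard} some of the edges he is forced to claim, paying a factor $2$ for the monotonicity rule. Call a set $A\in\mathcal{F}$ \emph{alive} if no edge of $A$ is owned by Waiter and no edge of $A$ that Client has claimed has been discarded; for an alive $A$ let $f_A$ be the number of its still free edges and put $\Phi=\sum_{A\text{ alive}}(q+1)^{-f_A}$. Then $\Phi=\Phi(\mathcal{F})$ at the start (all edges free, so $f_A=|A|$); at the end every edge is claimed, so the alive sets are exactly those contained in the retained set $X_C$ and each contributes $(q+1)^0=1$, whence $\Phi$ then equals the number of sets of $\mathcal{F}$ inside $X_C$; and a set contributes at least $1$ as soon as it lies in $X_C$. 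Hence it suffices to run the game so that $\Phi\le 2\Phi(\mathcal{F})$ throughout and $|X_C|\ge\lfloor|X|/(q+1)\rfloor$ in the end.

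For the strategy, when Waiter offers $S$ with $|S|=s$ Client claims the edge $x^\ast\in S$ minimising $\sum_{A\text{ alive},\,A\cap S=\{x\}}(q+1)^{-f_A}$ over $x\in S$. If $s=q+1$ he retains $x^\ast$ (adds it to $X_C$): averaging $\Delta\Phi=(q+1)\bigl(\sum_{A\cap S=\{x\}}(q+1)^{-f_A}\bigr)-\sum_{A\cap S\neq\emptyset}(q+1)^{-f_A}$ over the $q+1$ choices of $x$, exactly as in the proof of Theorem~\ref{thm:tool_ES}, gives $\Delta\Phi\le 0$. If $s<q+1$ he instead \emph{discards} $x^\ast$ (keeps it out of $X_C$) whenever this is still affordable, namely whenever $|X_C|+\lceil r/(q+1)\rceil\ge\lfloor|X|/(q+1)\rfloor$ where $r$ is the number of free edges after this turn; discarding $x^\ast$ kills every alive set meeting $S$, so again $\Delta\Phi\le 0$. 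Only a \emph{forced keep} — the case $s<q+1$ with that inequality failing — can increase $\Phi$, and then the same averaging (now over only $s$ choices) gives $\Delta\Phi\le\frac{q+1-s}{s}\,\Phi$. The edge bound is then routine: the displayed inequality holds at the start and is preserved by every move (a keep raises $|X_C|$ by $1$ and lowers $\lceil r/(q+1)\rceil$ by at most $1$, since $s\le q+1$; a discard is made only when it stays valid), so $|X_C|\ge\lfloor|X|/(q+1)\rfloor$ at the end.

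The main obstacle is to show that $\Phi$ never exceeds $2\Phi(\mathcal{F})$, i.e. that the total increase over all forced keeps is at most $\Phi(\mathcal{F})$. The intuition is that a forced keep occurs only when Client has just fallen behind his retention quota, which by the preserved inequality can happen only after he has spent a discard, and a discard annihilates a whole batch of alive sets; moreover the forced keep itself has Waiter claim $s-1$ edges of $S$, which kill every alive set meeting $S$ except those met only in $x^\ast$. Thus the weight that can be amplified is consumed each time, and expensive forced keeps cannot be chained. Turning this into a rigorous charging argument — each unit of potential created on a forced keep charged against potential simultaneously destroyed on nearby moves, plus a one-time reserve of $\Phi(\mathcal{F})$ — is the heart of the proof, and is where the constant $2$ enters. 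An alternative, possibly cleaner, route would be to merge $\Phi$ with a monotone correction term measuring the remaining retention deficit into a single potential that is non-increasing for every offer size $1\le s\le q+1$; arranging such a potential uniformly in $s$ is the delicate point.
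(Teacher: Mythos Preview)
Your proposal is not a proof but a plan whose central step is explicitly left open. You set up a potential $\Phi$ and correctly compute that on a ``forced keep'' with offer size $s<q+1$ one has $\Delta\Phi\le \frac{q+1-s}{s}\,\Phi$. But this bound is multiplicative in the current value of $\Phi$, not additive in $\Phi(\mathcal{F})$; a single forced keep with $s=1$ can multiply $\Phi$ by $q+1$, and several such moves can compound. You acknowledge that ``turning this into a rigorous charging argument\ldots is the heart of the proof'' and then do not provide it, offering instead a second speculative route (a corrected potential) whose ``delicate point'' you also leave unresolved. As written there is no argument that the total increase is bounded by $\Phi(\mathcal{F})$, and hence no proof that $\Phi\le 2\Phi(\mathcal{F})$ at the end. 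Indeed, consider Waiter offering singletons throughout: your discard rule allows roughly $q$ discards before forcing keeps, and each forced keep of $x$ multiplies the weight of every alive set through $x$ by $q+1$ while killing nothing; it is not clear why the destruction caused by earlier discards should cover these repeated amplifications.

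The paper's proof is entirely different and much shorter: Client plays a \emph{random} strategy. From each offered set $W_i$ he picks an element uniformly at random and then, independently, retains it in $X_C$ with probability $|W_i|/(q+1)$. This makes $\Pr[x\in X_C]=1/(q+1)$ for every $x$, so the expected number of $A\in\mathcal{F}$ with $A\subseteq X_C$ is at most $\Phi(\mathcal{F})$; Markov's inequality gives the factor $2$ immediately. Separately, $|X_C|$ is a sum of independent Bernoulli variables with total mean $|X|/(q+1)$, and a standard binomial median bound gives $\Pr[|X_C|\ge\lfloor|X|/(q+1)\rfloor]\ge 1/2$. The two events thus occur simultaneously with positive probability, yielding a deterministic strategy. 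This avoids any potential-tracking across moves of variable size and explains the constant $2$ without a charging scheme.
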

\begin{proof}
	Denote by $ W_i $ the set of elements offered by Waiter at the $ i $-th turn, and let $ \alpha_i=|W_i|/({q+1}) $.  Suppose Client plays the following random strategy: he picks an element from $ W_i $ uniformly at random, and then with probability $ \alpha_i $ puts it in $ X_C $. \\
	If element $ x\in X $ is offered in the $ i $-th turn, then $ \Pr[x\in X_C]=\alpha_i/|W_i|=1/{(q+1)}. $ Fix some $ A\in\cl{F} $. If, in some turn, Waiter offered at least two elements of $ A $, then surely Client will not fully claim $ A $. Since any element must be offered at some point we get that $ \Pr[A\subseteq X_C]\leq (q+1)^{-|A|} $, and thus $ \mathbb{E}(|\{A\in\mathcal{F}:A\subseteq X_C\}|)\leq\Phi(\mathcal{F}) $. It follows by Markov's inequality that
	\begin{align}
	\Pr[|\{A\in\mathcal{F}:A\subseteq X_C\}|>2\Phi]<1/2.\label{eq:prob 1}
	\end{align}
	Let $ m $ denote the total number of turns played in the game. Note that $ |X_C|=\sum_{i=1}^{m}Z_i $ where $ Z_1,\ldots, Z_m $ are independent Bernoulli random variables with $ \Pr[Z_i=1]= \alpha_i $. Hence
	\begin{align}
	\Pr[|X_C|\geq \lfloor|X|/(q+1)\rfloor]\geq \Pr[\text{Bin}(|X|,1/(q+1))\geq \lfloor|X|/(q+1)\rfloor ]\geq 1/2,\label{eq:prob 2}
	\end{align}
	where the first inequality holds by Theorem 5 from ~\cite{BinPos}.
	Combining (\ref{eq:prob 1}) and (\ref{eq:prob 2}) we conclude that with positive probability both $ |X_C|\geq \lfloor|X|/(q+1)\rfloor $ and $ |\{A\in\mathcal{F}:A\subseteq X_C\}|\leq 2\Phi $, and therefore there is a strategy for Client which will ensure a subset with these properties.
\end{proof}

For a set $ X $ and a family of subsets $ \cl{F} $ we defined $ \cl{F}^* $ to be the transversal family of $ \cl{F} $. If Client wins the $ CW(X,\cl{F}^*,q) $ game, then he has claimed at least one element from every set of $ \cl{F} $. The next theorem is therefore very useful in those situations where we want to show that Client can prevent Waiter from fully claiming a set of $ \cl{F} $.
\begin{thm}[implicit in Theorem 3.2 of \cite{WC_CW_ham}]\label{thm:tool_transversal}
	Let $ q $ be a positive integer, let $ X $ be a finite set and let $ \cl{F} $ be a family of subsets of $ X $. If
	\[ \sum_{A\in\cl{F}} e^{-|A|/(q+1)}<1, \]
	then Client has a winning strategy for the $ CW(X,\cl{F}^*,q) $ game.
\end{thm}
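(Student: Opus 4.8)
The plan is a weight-function (Erd\H{o}s--Selfridge-type) argument. The first step is a reformulation: since by the end of the game every element of $X$ has been claimed by one of the players, Client's final set $E_C$ meets every set of $\cl{F}$ --- and is therefore itself a member of $\cl{F}^*$, so Client wins --- if and only if Waiter does not manage to claim some $A\in\cl{F}$ in full (i.e. no $A\in\cl{F}$ has $A\subseteq E_W$ at the end). So it suffices to give Client a strategy that forever prevents Waiter from owning a complete set of $\cl{F}$.

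To this end I would track the potential
\[
\Phi \;=\; \sum_{\substack{A\in\cl{F}\\ A\cap E_C=\emptyset}} e^{-|A\cap E_F|/(q+1)},
\]
summing over the sets not yet touched by Client, each weighted by $e^{-(\text{number of its still-free elements})/(q+1)}$. At the start $\Phi=\sum_{A\in\cl{F}}e^{-|A|/(q+1)}<1$ by hypothesis, while when the game ends $E_F=\emptyset$, so any surviving set $A$ with $A\cap E_C=\emptyset$ would contribute $e^0=1$ and force $\Phi\geq 1$. Hence if Client can play so that $\Phi$ never increases, then at the end every $A\in\cl{F}$ satisfies $A\cap E_C\neq\emptyset$, which is exactly Client's goal.

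The heart of the matter is the one-turn estimate. Suppose Waiter offers a set $W$ of free elements with $1\le|W|\le q+1$, and for each $A$ with $A\cap E_C=\emptyset$ at the start of the turn write $m_A=|W\cap A|$ and $g_A=e^{-|A\cap E_F|/(q+1)}$. If Client picks $x\in W$, then every such $A$ with $x\in A$ becomes hit and its term decreases by $g_A$, while for every such $A$ with $x\notin A$ all of $W\cap A$ goes to Waiter and its term changes by $g_A(e^{m_A/(q+1)}-1)$. Averaging over the $|W|$ possible choices of $x$, the mean change in $\Phi$ equals $|W|^{-1}\sum_A g_A\left[(e^{m_A/(q+1)}-1)(|W|-m_A)-m_A\right]$, so it is enough to check that each bracket is at most $0$, i.e.
\[
(e^{m_A/(q+1)}-1)(|W|-m_A)\;\le\; m_A\qquad\text{whenever }0\le m_A\le|W|\le q+1 .
\]
Using $|W|-m_A\le(q+1)-m_A$ and putting $t=m_A/(q+1)\in[0,1]$, this reduces to $(e^t-1)(1-t)\le t$, equivalently $e^t(1-t)\le 1$, which holds since $e^t\le 1/(1-t)$ for $t\in[0,1)$ (compare Taylor series, $1/k!\le 1$) and trivially at $t=1$. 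Thus the average one-turn change is non-positive, so at every turn Client can pick an $x$ that does not increase $\Phi$; doing so each time is Client's winning strategy.

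I expect the only delicate point to be calibrating the weight function so that the per-turn inequality closes exactly, and this is precisely where the constant $e$ in the statement (rather than $q+1$, as in Theorems~\ref{thm:tool_ES} and~\ref{thm:tool_ES_subset}) is forced: since Waiter may offer anywhere from one to $q+1$ elements and may bundle several elements of a single set into one offer, the na\"ive weight $(q+1)^{-|A\cap E_F|}$ does not stay monotone, whereas the exponential weight absorbs the worst case $m_A$ close to $|W|$. An alternative, shorter route is to have Client pick uniformly at random from each offered set and show by a one-line supermartingale argument that $\Pr[A\subseteq E_W]\le e^{-|A|/(q+1)}$ for each fixed $A$ (each turn's survival probability $1-m_A/|W|$ is at most $e^{-m_A/(q+1)}$, and the $m_A$ over the game sum to $|A|$), then union-bound over $\cl{F}$ and derandomize; this avoids the calibration but yields only a non-explicit strategy.
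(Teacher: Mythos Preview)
The paper does not give its own proof of this theorem; it is simply quoted as implicit in Theorem~3.2 of \cite{WC_CW_ham}. Your potential-function argument is correct and is essentially the standard proof: the decisive inequality $e^t(1-t)\le 1$ for $t\in[0,1]$ is exactly what makes the exponential weight work under the monotonicity rule, and you verify it cleanly. Your alternative randomized route is also correct; note that derandomizing it via conditional expectations is not merely ``non-explicit'' but reproduces precisely your potential $\Phi$, so the two presentations are the same argument.
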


\section{Games  on $ K_n $}\label{sec:games_on_K_n}
\subsection{Star game}
\begin{proof}[\bfseries Proof of Proposition~\ref{prp:star}]
	First notice that the lower bound is trivial since if $ q+1< (n-1)/(k-1) $ then at the end of the game $ e_C> n(k-1)/2 $ which means that the average degree in Client's graph will be higher than $ k-1 $. We turn to the upper bound.\\
	Consider the following strategy for Waiter. In the first turn Waiter chooses some vertex $ v_0 $ and offers $ q+1 $ edges incident to that vertex. Suppose Client picks $ (v_0,v_1) $. In the second turn Waiter will offer some $ q+1 $ arbitrary edges incident to $ v_1 $. In general, if on the $ i $-th turn Client chose the edge $ (v_{i-1},v_i) $, and there are free edges incident to $ v_i $, then in the next turn Waiter will offer an arbitrary subset of those edges of size $ q+1 $, or all of them if there are less then $ q+1 $. If there are no free edges incident to $ v_i $ then Waiter will choose some other vertex with free edges incident to it, and offer a subset of size $ q+1 $ (or all) of those edges. Consider some vertex $ v $. According to the above strategy, we can pair Client's edges which are incident to $ v $ (with exception, perhaps, of the last edge) such that each pair was claimed in consecutive turns, and when Client claimed the later of the two Waiter claimed $ q $ edges incident to $ v $ (again with exception of the last edge). This observation leads to the upper bound on Client's maximum degree: $  \Delta_C\leq 2\lceil (n-1)/(q+2)\rceil $. Assuming $ q\geq\lceil n/k\rceil-2 $ we get
\[ 	\Delta_C\leq 2\left\lceil \dfrac{n-1}{q+2}\right \rceil\leq 2\left\lceil\dfrac{k(n-1)}{n}\right\rceil=2k.  \]

\end{proof}

\subsection{Large component and long path games}
\subsubsection{Waiter's strategies}
We start by presenting Waiter's strategies in these games. These strategies will be promptly analysed to get the upper bounds of Theorems \ref{thm:component} and \ref{thm:path}.\\

\underline{\textbf{Strategy $ S_C $ for Waiter in the Component game}}\\
Let $ n,q $ be integers with $ q+1\geq n-1 $. We describe strategy $ S_C $ for Waiter in a Client-Waiter game on $ E(K_n) $ with bias $ q $. Waiter will maintain 3 subsets $ X,Y,U\subseteq V $. Initiate $ X=Y=\emptyset;\;U=V $. We describe the strategy in 3 stages.\\

\textbf{Stage I}\\
In the $ i $-th turn Waiter picks a maximal set of vertices $ T_i\subseteq U $ such that $ e_F(T_i,U)\leq q+1 $. Denote $ t_i=|T_i| $, to be used later in the proof. \\
Waiter will offer all edges $ E_F(T_i,U) $. Suppose Client picks an edge $ (x,y) $ with $ x\in T_i $, then we add $ x $ to $ X $ and $ y $ to $ Y $, and remove $ T_i\cup\{y\} $ from $ U $. \\
We enumerate the vertices in $ X=\{x_1,x_2,...\},Y=\{y_1,y_2,...\} $ by the order of their addition. It is not hard to verify the following properties right after the $ i $-th turn:
\begin{enumerate}
	\item Client's graph is a perfect matching between $ X $ and $ Y $. In particular $ |X|=|Y|=i $.
	\item For any $ 1\leq j\leq i $, $ x_j $ has only free edges to (some or all) $ y_k $ with $ k<j $.
\end{enumerate}
This stage ends when $ U=\emptyset $. We denote by $ s $ the number of turns played in the first stage.\\

\textbf{Stage II}\\
This stage will last for at most $ s $ turns. In the $ i $-th turn Waiter will offer all free edges between $ y_i $ and all isolated vertices in Client's graph. By the end of this stage Client's graph is made of $ s $ components, each of size at most 3. We claim that between any pair of components there are at most 3 free edges, and those are all the remaining free edges in the game. \\
Indeed, let $ C_i, C_j $ be two components. We may assume that $ C_i=\{x_i,y_i,z_i\}, C_j=\{x_j,y_j,z_j\} $ where $ (y_i,z_i) $ and $ (y_j,z_j) $ are the edges claimed in this stage, and that $ i<j $. By property (2) of the previous stage, $ x_i $ does not have a free edge to any vertex in $ C_j $ and $ (x_j,z_i) $ is not available as well. By Waiter's play on this stage the edge $ (y_i,z_j) $ is not free and the edge $ (z_i,z_j) $ was offered sometime during the previous stage, so the free edges between $ C_i$ and $ C_j $ satisfy $ E_F(C_i,C_j)\subseteq \{(y_i,y_j), (y_i,x_j), (z_i,y_j)\} $.\\

\textbf{Stage III}\\
In the last stage Waiter creates an auxiliary board of $ K_s $ and identifies each vertex of this board with one of Client's components of the original game. Given Waiter's strategy for the Client-Waiter game on $ E(K_s) $ with a bias of $ \lfloor q/3\rfloor $, he can use this strategy to play in the original game by offering all edges between two components each time this strategy requires him to offer the edge between the corresponding vertices on the auxiliary board, and if Client chooses some edge which connects two of his components on the original board then Waiter will translate it to the appropriate edge between the corresponding vertices in the auxiliary board. This gives us the recursion  
\begin{align}
\mathcal{C}(n,q)\leq 3\cdot\mathcal{C}(s,\lfloor q/3\rfloor).\label{eq:comp_recursion}
\end{align}

\underline{\textbf{Strategy $ S_P $ for Waiter in the Path game}}\\
We keep all the notations of the previous strategy. This strategy is very similar, we note only the differences. Waiter plays in 2 stages.\\

\textbf{Stage I}\\
In the $ i $-th turn Waiter will pick a maximal set of vertices $ T_i\subseteq U $ such that $ e_F(T_i,V)\leq q+1 $. Denote: $ t_i=|T_i| $. \\
Waiter will offer all edges $ E_F(T_i,V) $. Suppose Client picks an edge $ (x,y) $ with $ x\in T_i $, then we add $ x $ to $ X $ and $ y $ to $ Y $, and remove $ T_i\cup\{y\} $ from $ U $. \\
We will have the following properties after the $ i $th turn:
\begin{enumerate}
	\item Client's graph is a union of at most $ i $ disjoint stars with all the center vertices in $ Y $ and the leaves in $ X $.
	\item All the free edges are in $ Y\cup U $.
\end{enumerate}
This stage will end when $ U=\emptyset $. By the end of this stage the vertices in $ Y $ are centers of disjoint stars in Client's graph, and the edges inside $ Y $ are all the remaining free edges in the game. We denote $ s=|Y| $. Clearly, $ s $ is at most the number of turns played at the first stage.\\

\textbf{Stage II}\\
As in the last stage of strategy $ S_C $ Waiter translates the game to an auxiliary game on $ E(K_s) $, but this time with the same bias $ q $. This leads to the following recursion
\begin{align}
\mathcal{P}(n,q)\leq \mathcal{P}(s,q)+2. \label{eq:path_recursion}
\end{align}

Strategies $ S_C $ and $ S_P $ will be used twice each in the proofs of (i) and (ii) of Theorem~\ref{thm:component} and Theorem~\ref{thm:path}, respectively. The difference between proving (i) and (ii) will be in the degree of precision required in the analysis of the strategies.
\subsubsection{$ k $-sized component and $ k $-path}
\begin{proof}[\bfseries Proof of Theorem~\ref{thm:component}(i)]
When $ q\geq n^2/2 $, Client's graph will be a single edge, so the theorem is true for $ k=1 $. We assume $ q<n^2/2 $ and proceed by induction on $ k $.\\
Let $ n $ be arbitrary and set $ q=6n^{2^k/(2^k-1)} $. 
Waiter will play strategy $ S_C $. Recall that for each $ 1\leq i\leq s $, $ t_i $ is the size of a largest subset $ T_i\subset U $ such that Waiter can offer all edges $ E_F(T_i,U) $. Certainly $ t_i\geq \lfloor q/n\rfloor $ for any $ i $. So the number $ s $ of turns at the first stage is at most
	\begin{align*}
	&s\leq \left \lceil\dfrac{n}{\lfloor q/n\rfloor}\right \rceil\leq\dfrac{n^2}{q-n}+1\leq \dfrac{n^2}{q-q/6}+1=\dfrac{1}{5}\cdot\dfrac{n^2}{q/6} +1< \dfrac{17}{60}\cdot\dfrac{n^2}{q/6}<\dfrac{1}{3}\left(\dfrac{q}{6}\right) ^{(2^{k-1}-1)/(2^{k-1})}\\
	&\Longrightarrow \left\lfloor\dfrac{q}{3}\right \rfloor \geq 6 s^{2^{k-1}/(2^{k-1}-1)}.
	\end{align*}
	We can therefore use the induction hypothesis with recursion (\ref{eq:comp_recursion}) and get that
	\begin{align*}
	\mathcal{C}(n,q)\leq 3\cdot\mathcal{C}(s,\lfloor q/3\rfloor)<3^{k}.
	\end{align*}
\end{proof}	 

\begin{proof}[\bfseries Proof of Theorem~\ref{thm:path}(i)]
	The proof is very similar to the above proof of Theorem~\ref{thm:component}(i). One easily checks that the claim is true for $ k=1 $. In the induction step we take $ q=3n^{2^k/(2^k-1)} $, and get
	\begin{align*}
	&s\leq \left \lceil\dfrac{n}{\lfloor q/n\rfloor}\right \rceil\leq\dfrac{n^2}{q-n}+1\leq \dfrac{n^2}{q-q/3}+1=\dfrac{1}{2}\cdot\dfrac{n^2}{q/3} +1< \dfrac{2}{3}\cdot\dfrac{n^2}{q/3}=\dfrac{2}{3}\left(\dfrac{q}{3}\right) ^{(2^{k-1}-1)/(2^{k-1})}\\
	&\Longrightarrow q \geq 3 s^{2^{k-1}/(2^{k-1}-1)},
	\end{align*}
	which together with recursion (\ref{eq:path_recursion}) implies
	\begin{align*}
	\mathcal{P}(n,q)\leq \mathcal{P}(s,q)+2<2k.
	\end{align*}
\end{proof}	 

\subsubsection{Polylogarithmic component and polylogarithmic path}
In order to get the second part of Theorem~\ref{thm:component} we need a finer analysis of Waiter's strategy. This is done in the next two lemmas.
\begin{lem}\label{lem:comp_log1}
	For any $ \gamma>\log_23 $ Waiter can prevent a component of size $ 2\left(\dfrac{\ln n}{\ln (q/n)}\right)^{\gamma} $ in Client's graph when playing a Client-Waiter game on $ E(K_n) $ with bias $ q\geq\lceil 6^{1/(2-3^{1/\gamma})} \rceil n $.
\end{lem}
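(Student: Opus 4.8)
The plan is to have Waiter iterate strategy $S_C$. Playing $S_C$ on $K_m$ with bias $q'$ (which requires $q'+1\ge m-1$) produces, by the analysis preceding the statement, the recursion $\mathcal{C}(m,q')\le 3\cdot\mathcal{C}(s,\lfloor q'/3\rfloor)$, where $s\le\lceil m/\lfloor q'/m\rfloor\rceil\le \frac{m^2}{q'-m}+1$ is the number of Stage~I rounds. So I would set $(n_0,q_0)=(n,q)$ and, as long as the bias permits, $(n_{i+1},q_{i+1})=(s_i,\lfloor q_i/3\rfloor)$, where $s_i$ is the Stage~I round count of $S_C$ played on $(n_i,q_i)$. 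Then $\mathcal{C}(n,q)\le 3^k n_k$ for every $k$ such that $q_i+1\ge n_i-1$ holds for $i=0,\dots,k-1$, so it suffices to exhibit such a $k$ with $3^k n_k<2\left(\frac{\ln n}{\ln(q/n)}\right)^{\gamma}$; we may assume $n$ is large (and we are in the intended regime, where $q/n$ is bounded).

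First I would track the ratio $r_i:=q_i/n_i$. The hypothesis $q/n\ge 6^{1/(2-3^{1/\gamma})}$ gives $r_0>6$, since $\gamma>\log_2 3$ forces $1<3^{1/\gamma}<2$, hence $1/(2-3^{1/\gamma})>1$. One round of $S_C$ sends $(n,q)$ to roughly $(n^2/q,\,q/3)$, i.e.\ $r\mapsto\approx 3(r/3)^2$, which is increasing once $r>3$; carrying the roundings in $s\le\frac{m^2}{q'-m}+1$ and in $\lfloor q'/3\rfloor$, I would show by induction that $r_i>6$ (so the bias condition is met) for every $i$ up to the first index at which $n_i$ falls below a fixed absolute constant $C_0$. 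Since $r_i>6$ forces $q_i-n_i\ge\frac56 q_i$, this yields the clean recursion $n_{i+1}\le\frac65\cdot\frac{n_i^2}{q_i}+1$; in particular $n_i$ strictly decreases while it exceeds $C_0$, so a first index $k$ with $n_k\le C_0$ exists, and $\mathcal{C}(n,q)\le C_0\cdot 3^k$.

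The crux is bounding $3^k$, i.e.\ showing $n_i$ drops below $C_0$ already after $k=O(\log\log n)$ steps. Putting $b_i:=3^i n_i$ and using $q_i\ge q\,3^{-i}(1-o(1))$ (valid since $i\le k=O(\log\log n)\ll\log_3 q$), the recursion becomes $b_{i+1}\le\frac{A}{q}\,b_i^2+3^{i+1}$ with an absolute constant $A=\frac{18}{5}+o(1)<6<r_0$; the homogeneous part telescopes to $b_i\lesssim\frac{q}{A}\left(\frac{A}{r_0}\right)^{2^{i}}$, which decays doubly exponentially because $A<r_0$, while the additive terms contribute only $O(3^k)$. Hence $n_i=b_i/3^i$ falls below $C_0$ once $2^i\gtrsim\frac{\ln q}{\ln(r_0/A)}$, so $k\le\log_2\frac{\ln q}{\ln(r_0/A)}+O(1)$ and $3^k\le c_1\left(\frac{\ln q}{\ln(r_0/A)}\right)^{\log_2 3}$. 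Now $\ln q=\ln n+\ln(q/n)$, so $\frac{\ln q}{\ln(r_0/A)}\le\left(1+\frac{\ln n}{\ln(q/n)}\right)\cdot\frac{\ln r_0}{\ln r_0-\ln 6}$, and the threshold on $q/n$ gives $\frac{\ln 6}{\ln r_0}\le 2-3^{1/\gamma}$, hence $\frac{\ln r_0}{\ln r_0-\ln 6}\le\frac{1}{3^{1/\gamma}-1}$, a constant depending only on $\gamma$. Writing $L:=\frac{\ln n}{\ln(q/n)}$, this gives $3^k\le c_2(L+1)^{\log_2 3}\le c_3 L^{\log_2 3}$ for $L\ge 1$, and since $\gamma>\log_2 3$ and $L\to\infty$ (as $q/n$ is bounded and $n$ is large) we get $c_3 L^{\log_2 3}\le\frac{2}{C_0}L^{\gamma}$, i.e.\ $\mathcal{C}(n,q)\le C_0\cdot 3^k<2L^{\gamma}$, as required. (One disposes separately of the trivial case where $q$ is so large that already $n_1\le C_0$.)

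The step I expect to be the main obstacle is exactly this constant bookkeeping. The additive $+1$ in $n_{i+1}\le\frac65\frac{n_i^2}{q_i}+1$ and the floors $q_{i+1}=\lfloor q_i/3\rfloor$ break the exact multiplicativity of the telescoping, and since $n_i$ collapses doubly exponentially to a constant one must check carefully that the accumulated error (of order $3^k$, i.e.\ polylogarithmic in $n$) is absorbed by the slack $\gamma-\log_2 3>0$ together with the gap between $6$ and $6^{1/(2-3^{1/\gamma})}$ in the bias threshold; isolating precisely why $6^{1/(2-3^{1/\gamma})}$ is the correct constant is where the genuine computation lies. A secondary technical point is verifying, with the roundings present, that $r_i$ really stays above $6$, so that strategy $S_C$ remains applicable at each of the $k$ iterations.
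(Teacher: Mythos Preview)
Your approach differs from the paper's. Instead of unrolling the recursion and bounding the number $k$ of iterations, the paper argues by a single-step induction on $n$. The key computation is that one application of $S_C$ takes the ratio $r=q/n$ to $r':=\lfloor q/3\rfloor/s$ with
\[
r'\;\ge\; 0.18\,r^{2}\;\ge\; r^{\,3^{1/\gamma}},
\]
the last inequality being precisely where the threshold $r\ge c:=\lceil 6^{1/(2-3^{1/\gamma})}\rceil$ enters (it amounts to $r^{\,2-3^{1/\gamma}}\ge 6$, which matches $0.18\approx 1/6$). Since $(3^{1/\gamma})^{\gamma}=3$, the factor $3$ from the recursion is absorbed \emph{exactly}:
\[
\mathcal{C}(n,q)\;\le\; 3\,\mathcal{C}(s,\lfloor q/3\rfloor)\;<\;3\cdot 2\Bigl(\tfrac{\ln s}{\ln r'}\Bigr)^{\gamma}\;\le\;6\Bigl(\tfrac{\ln n}{3^{1/\gamma}\ln r}\Bigr)^{\gamma}\;=\;2\Bigl(\tfrac{\ln n}{\ln r}\Bigr)^{\gamma}.
\]
No ``$n$ large'' or ``$q/n$ bounded'' hypothesis is needed beyond the base case $n\le 2c+1$. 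This is also why the bias threshold has its specific shape: the condition $c^{\,2-3^{1/\gamma}}\ge 6$ is exactly what turns the raw estimate $r'\gtrsim r^{2}/6$ into $r'\ge r^{\,3^{1/\gamma}}$.

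Your unrolling, by contrast, ends with $\mathcal{C}(n,q)\le C_0\,3^{k}$ and $3^{k}\le c_3\,L^{\log_2 3}$, where $c_3$ depends on $\gamma$ (through $1/(3^{1/\gamma}-1)$) and on the absolute constants $A,C_0$. To convert $c_3\,L^{\log_2 3}$ into $\tfrac{2}{C_0}L^{\gamma}$ you must invoke $\gamma>\log_2 3$ together with $L$ large, i.e.\ $n$ large --- a hypothesis the lemma does not grant. Your standing assumption that $q/n$ is bounded is likewise not in the statement (and is in tension with the iteration, since you yourself show $r_i$ increases). So as written your argument establishes only a large-$n$ version of the lemma. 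The ``constant bookkeeping'' you flag as the obstacle is exactly what the paper's choice of exponent $3^{1/\gamma}$ is designed to sidestep: it trades the doubling $\ln r\mapsto 2\ln r+O(1)$ (which is what produces your exponent $\log_2 3$) for the weaker but exactly calibrated growth $\ln r\mapsto 3^{1/\gamma}\ln r$, so that the inductive bound reproduces itself with the correct leading constant $2$ and exponent $\gamma$ at every step, with no asymptotic slack required.
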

\textbf{Remark}: notice that when $ q=6n^{2^k/(2^k-1)} $, then for large $ n,k $ this lemma roughly gives us $ \cl{C}(n,q)<2\cdot 3^k $ which is only slightly worse than the statement in Theorem~\ref{thm:component}(i).
\begin{proof}
	Fix $ \gamma>\log_23 $ and set $ q=cn $ with $ c=\lceil 6^{1/(2-3^{1/\gamma})} \rceil $. Notice that $ c\geq 7 $. The claim is true for $ n\leq 2c+1 $ since then if $ q= cn $ then $ q\geq \binom{n}{2} $. We proceed by induction on $ n $. Waiter will play strategy $ S_C $. In each turn $ i $ at the first stage we have $ t_i\geq \lfloor q/n\rfloor $, therefore the first stage lasts for at most 
	\[ s\leq \left\lceil \dfrac{n}{\lfloor q/n\rfloor}\right \rceil\leq \dfrac{n}{q/n-1}+1<\dfrac{n^2+q}{q-n}   \]
	turns, which is also an upper bound on the number of components in Client's graph at the end of the second stage. We have
	\begin{align}
	\dfrac{\lfloor q/3\rfloor}{s}&\geq \dfrac{(q/3-1)(q-n)}{n^2+q}\geq\left(\dfrac{q}{n}\right)^2\cdot\dfrac{(1/3-1/q)(1-1/c)}{1+q/n^2}\nonumber\\ 
	&\geq \left(\dfrac{q}{n}\right)^2\cdot\left (\dfrac{1}{3}-\dfrac{1}{100}\right )\cdot\dfrac{6}{7}\cdot\dfrac{2}{3}\geq 0.18\cdot\left(\dfrac{q}{n}\right)^2,\label{eq:comp_log_ub1}
	\end{align}
	(we used the assumptions that $ 100< 2c(c+1)\leq q\leq n^2/2 $). We get that
	\[ \dfrac{\lfloor q/3\rfloor}{s}\geq 0.18 \cdot \left(\dfrac{q}{n}\right) ^2\geq \dfrac{c^{2-3^{1/\gamma}}}{6}\cdot\left(\dfrac{q}{n}\right)^{3^{1/\gamma}}>\left(\dfrac{q}{n}\right)^{3^{1/\gamma}}.   \]
	Using recursion (\ref{eq:comp_recursion}) and our induction hypothesis we derive
	\[ \mathcal{C}(n,q)\leq 3\cdot\mathcal{C}(s,\lfloor q/3\rfloor)<6\left(\dfrac{\ln n}{\ln (\lfloor q/3\rfloor/s)} \right)^{\gamma}< 6\left(\dfrac{\ln n}{3^{1/\gamma}\ln(q/n)}\right)^{\gamma}\leq 2\left(\dfrac{\ln n}{\ln(q/n)}\right)^{\gamma}.  \]
\end{proof}

\begin{lem}\label{lem:comp_log2}
	There is $ n_0 $ such that for all integers $ n>n_0 $ and $ q\geq 1.6n $, $ \mathcal{C}(n,q)\leq 3\mathcal{C}(n_1,q_1) $ with $ q_1= \lfloor q/3 \rfloor $ and $ \dfrac{q_1}{n_1}\geq 1.001\dfrac{q}{n} $.
\end{lem}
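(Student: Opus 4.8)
The plan is to let Waiter follow strategy $S_C$ — legitimate here, since $q\geq1.6n>n-2$ gives $q+1\geq n-1$ — and to set $q_1=\lfloor q/3\rfloor$ and $n_1=s$, where $s$ is an upper bound, valid against every Client strategy, on the number of turns of Stage~I. Then $\mathcal C(n,q)\leq3\mathcal C(n_1,q_1)$ is precisely recursion~(\ref{eq:comp_recursion}) together with the (obvious) monotonicity of $\mathcal C(\cdot,q_1)$. Since $q_1\geq(q-2)/3$ and $q\geq1.6n$, the remaining requirement $q_1/n_1\geq1.001\,q/n$ follows as soon as we prove
\[
s\ \leq\ \frac{n}{3.003}\,\bigl(1-o(1)\bigr)\qquad(n\to\infty,\ \text{uniformly over }q\geq1.6n),
\]
so the whole lemma reduces to bounding the length of Stage~I.

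The crucial point — and the reason a constant of $\tfrac12$ rather than the $1$ of Lemma~\ref{lem:comp_log1} is available — is that throughout Stage~I the set $U$ spans a complete graph. Indeed every edge offered in an earlier turn meets $T_1\cup\cdots\cup T_{i-1}$, which has already left $U$, so at the start of turn $i$ all $\binom{u_i}{2}$ edges inside $U$ are free (write $u_i:=|U|$ then, so $u_1=n$, $u_{s+1}=0$, $u_i-u_{i+1}=|T_i\cup\{y_i\}|\in\{t_i,t_i+1\}$). Hence $e_F(T,U)=\binom{t}{2}+t(u_i-t)=\tfrac12 t(2u_i-t-1)$ for any $T\subseteq U$ with $|T|=t$, so $t_i$ is the largest integer $t$ with $u_i\leq h(t):=\frac{q+1}{t}+\frac{t+1}{2}$. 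Since $h$ is decreasing in $t$ over the relevant range, $t_i$ is non-decreasing along the game, and it is of order $q/u_i$: Waiter's batches grow as $U$ is consumed, which is exactly what keeps Stage~I short.

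To make this quantitative I would group the Stage~I turns into the blocks $B_m=\{i:t_i=m\}$, each (by monotonicity of $t_i$) a run of consecutive turns, during which $u$ decreases by at most the length of $\{u:h(m+1)<u\leq h(m)\}$ plus one extra batch, i.e.\ by at most $h(m)-h(m+1)+O(m)=\frac{q+1}{m(m+1)}+O(m)$ — this interval being truncated at $u=n$ for the first non-empty block. On each turn in $B_m$ at least $\max\{m,2\}$ vertices leave $U$: for $m\geq2$ Waiter offers the $\binom{m}{2}\geq1$ edges inside $T_i$, so in the worst case Client claims such an edge and $u$ drops by exactly $m$, while for $m=1$ there is no such edge and $u$ drops by $2$. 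Therefore $|B_m|\leq\frac1{\max\{m,2\}}\cdot\bigl(\text{decrease of }u\text{ over }B_m\bigr)$, and summing over $m$, using $\sum_{m\geq2}\frac1{m^2(m+1)}=\frac{\pi^2}{6}-\frac32$, gives for $1.6n\leq q\leq2n$ (when the block $B_1$ is present and accounts for roughly $\tfrac12(n-\tfrac q2)$ of the turns)
\[
s\ \leq\ \tfrac12\Bigl(n-\tfrac q2\Bigr)+\Bigl(\tfrac{\pi^2}{6}-\tfrac32\Bigr)q+O(\sqrt q)\ =\ \tfrac n2+\Bigl(\tfrac{\pi^2}{6}-\tfrac74\Bigr)q+O(\sqrt q)\ \leq\ 0.332\,n+O(\sqrt n),
\]
since $\tfrac{\pi^2}{6}-\tfrac74<-0.105$ and the middle expression is decreasing in $q$, so $q=1.6n$ is extremal. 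For $q>2n$ the block $B_1$ is empty, the first non-empty block is truncated at $u=n$ (so it is short), and the same summation produces a bound that is largest — still at most about $0.29\,n$ — near $q=2n$ and decreases as $q$ grows; in particular $s=o(n)$ once $q=\omega(n)$, and $s=1$ once $q\geq\binom n2$. In every case $s\leq0.332\,n+o(n)$, which is below $n/3.003=0.333\ldots\,n$ for all large $n$, as needed.

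The only real obstacle is that the lemma is \emph{tight}: at $q=1.6n$ the bound on $s$ is $\approx0.332\,n$ against a target of $\approx0.333\,n$, so the block decomposition must be carried out carefully enough that the rounding terms — the floors defining the $t_i$, the one-batch overshoot at the end of each block, and the truncation of $B_1$ at $u=n$, all hidden in the $O(\sqrt q)$ above — do not eat the $\approx0.001\,n$ of slack. This is precisely why the hypothesis reads $q\geq1.6n$, why the conclusion carries the peculiar constant $1.001$, and why the statement holds only for $n$ past some $n_0$; the most delicate bookkeeping is the contribution $\tfrac12(n-\tfrac q2)$ of the first block throughout the range $1.6n\leq q<2n$.
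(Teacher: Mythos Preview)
Your argument is correct and is essentially the paper's own proof: Waiter plays $S_C$, one groups the Stage~I turns by the value $t_i$, bounds the length of the block $\{i:t_i=m\}$ by (range of $u$)/(drop per turn), and sums to get $s\le\frac{n}{2}-\frac{q}{4}+\bigl(\frac{\pi^2}{6}-\frac32\bigr)q+o(n)<0.999\,\frac{n}{3}$ at $q=1.6n$, whence $q_1/n_1\ge1.001\,q/n$ for large $n$. The only cosmetic differences are that you first derive the exact expression $e_F(T,U)=\tfrac12 t(2u-t-1)$ and the threshold function $h(t)$, whereas the paper uses the slightly cruder intervals $q/(k+1)<|U|\le q/k$, and that you treat the regime $q>2n$ (where $B_1$ is empty) and the $O(\sqrt q)$ rounding terms explicitly, whereas the paper absorbs these into its ``sub-optimal assumptions'' and the final ``for $n$ large enough''.
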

\begin{proof}
	In order to prove this lemma we need to get a better bound on $ s $ than that we used in Lemma~\ref{lem:comp_log1}. Denote by $ s_k $ the number of moves in the first stage for which $ t_i=k $. For simplicity and clarity of the calculations we make the following sub-optimal assumptions.
	\begin{itemize}
		\item While $ n\geq|U|>q/2 $ we assume $ t_i= 1 $. During this time $ U $ gets decreased by at least 2 vertices a turn, which gives $ s_1\leq (n-q/2)/2 $.
		\item For every $ k\geq 2 $, while $ q/k\geq |U|>q/(k+1) $ we assume $ t_i=k $. During this time $ U $ gets decreased by $ k $ vertices a turn, which gives $ s_k\leq (q/k-q/(k+1))/k=q/k^2(k+1) $.
	\end{itemize}
	This leads to the following bound on $ s $:
	\[ s\leq \sum_{k=1}^{\infty}s_k\leq \dfrac{n-q/2}{2}+\sum_{k=2}^{\infty}\dfrac{q}{k^2(k+1)}\leq \dfrac{n}{2}-\dfrac{q}{4}+\dfrac{q(\pi^2-9)}{6}<0.999\dfrac{n}{3}.\]
	Notice that the necessity to achieve the last inequality is the reason for the constant 1.6 in our upper bound. Set $ n_1=s,\;q_1=\lfloor q/3\rfloor. $ Then 
	\[ q_1\geq \dfrac{q}{3}-1\geq \dfrac{qn_1}{0.999n}-1>1.001\dfrac{q}{n}n_1, \]
	where the last inequality is for large enough $ n $. By our recursion:
	\[ \mathcal{C}(n,q)\leq 3\cdot\mathcal{C}(n_1,q_1). \]
\end{proof}

\begin{proof}[\bfseries Proof of Theorem~\ref{thm:component}(ii)]
	Let $ n $ be an integer. Due to the monotonicity of the game it is enough to prove the Theorem for $ q=1.6n $. Let $ \gamma=\log_23+\epsilon/2 $ and let $ c=c(\gamma) $ be such that Waiter can prevent a component of size $ 2(\ln n/\ln c)^\gamma $ when playing with bias $ cn $ (which we get from Lemma~\ref{lem:comp_log1}). Set $ m=\ln c/\ln1.001 $. Let $ n_1,q_1 $ be the integers guaranteed by Lemma~\ref{lem:comp_log2}. We can recurrently use this lemma to get a sequence $ \{(n_i,q_i)\}_{i=1}^{\ell} $ of pairs with $ \dfrac{q_i}{n_i}\geq 1.001\dfrac{q_{i-1}}{n_{i-1}} $; with $ \ell $ the first such that $ q_\ell\geq cn_\ell $. Notice that necessarily  $ \ell\leq m $ and that since for any $ i $, $ q_i=\lfloor q_{i-1}/3\rfloor $,
	\[ n_{\ell-1}> \dfrac{q_{\ell-1}}{c}\geq \dfrac{q}{c\cdot 4^\ell}= \dfrac{1.6n}{c\cdot 4^{\ell}}, \]
	and so if $ n>n_04^{\ell}c/1.6 $, where $ n_0 $ is from Lemma~\ref{lem:comp_log2}, then  our use of the lemma was valid. Finally we get from Lemma~\ref{lem:comp_log1}
	\[ \mathcal{C}(n,q)\leq 3\cdot\cl{C}(n_1,q_1)\leq\ldots\leq 3^\ell\cdot\mathcal{C}(n_\ell,q_\ell)
	<  2\cdot 3^m\left(\dfrac{\ln n_\ell}{\ln c}\right)^{\gamma}\leq (\ln n)^{\log_23+\epsilon},  \]
	for $ n $ large enough.
\end{proof}

\begin{prop}\label{prop:path_log}
	For all integers $ n,q=cn $ with $ c\geq 2 $ , $ \mathcal{P}(n,q)< 2\log_2\log_{2c/3}q+1 $. 
\end{prop}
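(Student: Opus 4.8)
The plan is to iterate the path recursion \eqref{eq:path_recursion} just as was done for the component game, but now keeping careful track of how the ratio $q/n$ grows under one application of strategy $S_P$. First I would verify the base case: when $n$ is small enough that $q=cn\geq\binom n2$ with $c\geq 2$, Client's graph is a single edge, so $\mathcal P(n,q)\leq 1$ and the claimed bound holds trivially; so we may induct on $n$. For the inductive step, Waiter plays strategy $S_P$. In Stage~I, each turn Waiter picks a maximal $T_i\subseteq U$ with $e_F(T_i,V)\leq q+1$, so $t_i\geq\lfloor q/n\rfloor$ and $|U|$ drops by at least $t_i+1$ each turn; hence the number $s=|Y|$ of turns (equivalently, the number of stars = vertices of the auxiliary $K_s$) satisfies $s\leq\lceil n/\lfloor q/n\rfloor\rceil$, which for $q=cn$, $c\geq 2$, is bounded by roughly $n^2/(q-n)=(n/c)\cdot c/(c-1)\leq 2n/c$ (being a bit generous with the rounding). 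The key point is that the auxiliary game is played with the \emph{same} bias $q$, so we apply the induction hypothesis to the pair $(s,q)$, and the new ratio is $q/s\geq q/(2n/c)=c^2/2\geq (2c/3)\cdot c$; more simply, one turn of $S_P$ replaces $c=q/n$ by a ratio at least $(2c/3)\cdot c$, i.e. the ratio gets raised to a power while picking up a constant factor $2/3$. Iterating this, after $\ell$ rounds the ratio exceeds $(2c/3)^{2^\ell}$, and once $n_\ell$ has dropped to a constant the recursion stops; tracking via \eqref{eq:path_recursion} gives $\mathcal P(n,q)\leq 2\ell+\mathcal P(n_\ell,q_\ell)$, and the number of rounds needed is about $\log_2\log_{2c/3}q$, yielding $\mathcal P(n,q)<2\log_2\log_{2c/3}q+1$.

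Concretely, I would set up the induction so that the quantity controlled is: whenever $q=cn$ with $c\geq 2$, then $\mathcal P(n,q)\leq 2\log_2\log_{2c/3}q+1$. In the step, with $s\leq 2n/c$ we have $q=cn\geq (c^2/2)s$, and since $c\geq 2$ one checks $c^2/2\geq 2c/3\cdot\text{(something}\geq c)$, so in fact $q/s\geq (2c/3)\cdot c = (2c/3)c$, hence $\log_{2(q/s)/3}q \leq \log_{(2c/3)^2}q=\tfrac12\log_{2c/3}q$ (using $2(q/s)/3\geq (2c/3)^2$). Feeding $(s,q)$ into the induction hypothesis,
\[
\mathcal P(n,q)\leq \mathcal P(s,q)+2\leq 2\log_2\log_{2(q/s)/3}q+1+2\leq 2\log_2\!\Big(\tfrac12\log_{2c/3}q\Big)+3 = 2\log_2\log_{2c/3}q+1,
\]
which closes the induction. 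One has to be mildly careful that $s\geq 2$ (otherwise Client's graph is already trivial and we are done directly) and that the base of the logarithm stays above $1$ throughout, which holds because $c\geq 2$ forces $2c/3\geq 4/3>1$ and the base only grows under iteration.

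The main obstacle I expect is bookkeeping with the floors/ceilings and the precise constant: getting the clean bound $s\leq 2n/c$ (or whatever constant makes $c^2/2\geq (2c/3)^2$, i.e. $c\geq 8/9\cdot$ something) from $s\leq\lceil n/\lfloor q/n\rfloor\rceil$ requires $q-n$ in the denominator to be comfortably large, which is exactly where the hypothesis $c\geq 2$ is spent, and one should double-check there is enough slack so that the ``$+1$'' in $s\leq n^2/(q-n)+1$ does not spoil the ratio estimate. This is analogous to, but simpler than, the two-lemma analysis used for Theorem~\ref{thm:component}(ii), since here the bias is not divided by $3$ at each level, so the ratio genuinely grows and the recursion terminates after doubly-logarithmically many steps.
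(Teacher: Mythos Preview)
Your approach is exactly the paper's: induct on $n$, play $S_P$, bound the number $s$ of stars created in Stage~I, apply the recursion $\mathcal P(n,q)\leq\mathcal P(s,q)+2$, and use that one round effectively squares the base $2c/3$ of the inner logarithm.

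The one real gap is the constant in your bound on $s$. From $s\leq 2n/c$ you only get $q/s\geq c^2/2$, yet you then assert ``in fact $q/s\geq(2c/3)\cdot c=2c^2/3$''; this does not follow, since $c^2/2<2c^2/3$. The inequality $2(q/s)/3\geq(2c/3)^2$ that you invoke in your displayed chain is equivalent to $q/s\geq 2c^2/3$, so with your bound the induction does not close. The paper obtains the sharper $s\leq 3q/(2c^2)=3n/(2c)$ (hence exactly $q/s\geq 2c^2/3$) via a short case analysis of Client's move $(x_i,y_i)$ with $x_i\in T_i$: if $y_i\in Y$ the edge attaches to an existing star; if $y_i\in T_i$ then $y_i$ has no remaining free edges and this pair can be ignored in Stage~II; only when $y_i\in U\setminus T_i$ is a \emph{new} star created, and precisely then $U$ shrinks by $t_i+1\geq\lfloor q/n\rfloor+1$. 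This yields $s\leq\lceil n/(\lfloor q/n\rfloor+1)\rceil\leq q/c^2+1\leq 3q/(2c^2)$, the last step using $n>2c$ (the base case $n\leq 2c+1$ gives $q\geq\binom n2$). Your blanket claim that ``$|U|$ drops by at least $t_i+1$ each turn'' is not literally true, but this case split is exactly how the extra $+1$ is legitimately recovered. With $s\leq 3n/(2c)$ in hand your displayed computation becomes identical to the paper's.
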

\begin{proof}
	Fix $ c $. The claim is true for every $ 1\leq n\leq 2c+1 $ since then $ q\geq\binom{n}{2} $. We proceed by induction on $ n $. Playing strategy $ S_P $ we have that in every turn at the first stage
	\[ t_i\geq \left\lfloor \dfrac{q}{n}\right\rfloor. \]
	In each turn Client chooses an edge $ (x_i,y_i) $ with $ x_i\in T_i $ and $ y_i\in V $.  We consider three cases:
	\begin{itemize}
		\item If $ y_i\in V\backslash U $ then $ y_i\in Y $ and the edge $ (x_i,y_i) $ is just an additional edge to an existing star in Client's graph.
		\item If $ y_i\in T_i $ then $ y_i $ has no more free edges and Waiter can ignore the edge $ (x_i,y_i) $ in the next stage of his strategy.
		\item If $ y_i\in U\backslash T_i $ then Client has just created a new star and $ U $ got decreased by $ t_i+1 $ vertices.
	\end{itemize}
	Therefore, by the end of the first stage Client's graph is a union of at most 
	\[ s\leq \left\lceil\dfrac{n}{\lfloor q/n\rfloor+1}\right\rceil\leq \dfrac{q}{c^2}+1\leq \dfrac{3q}{2c^2} \]
	disjoint stars (we used in the last inequality that $ n> 2c\Longrightarrow q>2c^2 $).\\
	Hence $ q\geq\dfrac{2c^2}{3}\cdot s $, and we get from our recursion and the induction hypothesis that
	\[ \mathcal{P}(n,q)\leq 2+\mathcal{P}(s,q)< 3+2\log_2\log_{(2c/3)^2}q=2\log_2\log_{2c/3}q+1. \]
\end{proof}

\begin{proof}[\bfseries Proof of Theorem~\ref{thm:path}(ii)]
	Let $ n $ be an integer. Due to monotonicity of the game we may assume $ q=n $. When Waiter plays strategy $ S_P $ Client's graph after the first stage contains at most $ s\leq n/2 $ disjoint stars, hence $ q\geq 2s $ and by the recursion and Proposition~\ref{prop:path_log}
	\[ \mathcal{P}(n,q)\leq 2+\mathcal{P}(s,q)<3+2\log_2\log_{4/3}q\leq 3\ln\ln n, \]
	for $ n  $ large enough.	
\end{proof}

\subsubsection{Client's side: linear-sized component and linear-sized path}\label{proof: path_comp_lb}
\begin{proof}[\bfseries Proof of Theorem~\ref{thm:component}(iii)]
	Set $ \delta=\dfrac{\epsilon}{1-\epsilon},\; \theta=e^{-2.5/\delta-1} $ and let 
	\[ \mathcal{F}:=\{E(H): H\subseteq K_n, v_H\leq\theta n, e_H\geq (1+\delta)v_H \}. \]
	Then
	\begin{align*}
	\Phi(\mathcal{F})&:=\sum_{i=4}^{\lfloor\theta n\rfloor}\binom{n}{i}\binom{\binom{i}{2}}{(1+\delta)i}(q+1)^{-(1+\delta)i}< \sum_{i=4}^{\lfloor\theta n\rfloor}\left [\dfrac{en}{i}\left (\dfrac{(1-\epsilon)ei}{2}\right )^{1+\delta}\left (\dfrac{(1-\epsilon)n}{2}\right )^{-(1+\delta)}  \right ]^i\\
	&=\sum_{i=4}^{\lfloor\theta n\rfloor}\left[e^{2+\delta}\left (\dfrac{i}{n}\right )^\delta \right ]^i\leq \sum_{i=4}^{\lfloor\theta n\rfloor} \left ( e^{2+\delta} \theta^\delta\right )^i<\sum_{i=4}^{\infty} e^{-i/2}<\dfrac{1}{2}.
	\end{align*}
	By Theorem~\ref{thm:tool_ES_subset} Client has a strategy such that by the end of the game his graph contains a subgraph $ G_C $, with $ e(G_C)= \dfrac{n(n-1)}{2(q+1)}\geq\dfrac{n}{1-\epsilon} $ edges such that every connected component $ U $ with size at most $ \theta n $ has less than $ (1+\delta)|U| $ edges in $ G_C $. Suppose that all the components in $ G_C $ are of size less than $ \theta n $. Then 
	\[ |E(G_C)|=\sum_{U\in \text{comp}(G_C)}e_{G_C}(U)<(1+\delta)\sum_{U\in \text{comp}(G_C)}|U|\leq(1+\delta)n=\dfrac{n}{1-\epsilon}, \]
	which is a  contradiction. Therefore Client has a connected component of size at least $ \theta n $.
\end{proof}
\begin{proof}[\bfseries Proof of Theorem~\ref{thm:path}(iii)]
\begin{lem}\label{lem:linear_path}
	Let $ \epsilon,\gamma>0 $ and $ \delta_1>\delta_2>0 $ be constants. Let $ G $ be a graph on $ n $ vertices with the following properties:
	\begin{enumerate}
		\item $ G $ has $ e_G\geq (1+\epsilon)n $ edges,
		\item every set $ S $ of size $ |S|\leq \delta_1 n $ spans $ e(S)<(1+\epsilon)|S| $ edges,
		\item every set $ S $ of size $ |S|\leq \delta_2 n $ spans $ e(S)<(1+\epsilon/2)|S| $ edges,
		\item for every set $ S $ of size $ |S|\leq \gamma n $, we have $ e(S,V\backslash S)<\dfrac{\epsilon\delta_2}{2}n $ edges.
	\end{enumerate}
	Then $ G $ contains a path of length at least $ \gamma n $.
\end{lem}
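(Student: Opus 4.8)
The plan is to assume for contradiction that $ G $ has no path of length $ \gamma n $ and to contradict property (1) via a depth‑first search (DFS) argument localised to the densest component of $ G $.

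\emph{Step 1 (reduce to a dense component).} Writing $ e_G\ge(1+\epsilon)n=(1+\epsilon)\sum_C v_C $ with the sum over the components $ C $ of $ G $, some component $ C^* $ satisfies $ e_{C^*}\ge(1+\epsilon)v_{C^*} $, and then property (2) forces $ v_{C^*}>\delta_1 n $. As $ C^* $ inherits properties (2)--(4) and contains no path of length $ \gamma n $, it is enough to derive a contradiction from these facts about $ C^* $.

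\emph{Step 2 (DFS inside $ C^* $).} Run a DFS on $ C^* $, maintaining the usual partition of $ V(C^*) $ into the finished set $ D $, the current stack $ P $ (always a path of $ C^* $), and the untouched set $ U $, with the invariant $ e(D,U)=0 $; by hypothesis the stack never reaches $ \gamma n $ vertices. Since $ v_{C^*}>\delta_1 n>\gamma n $, there is a first moment with exactly $ \gamma n $ vertices touched, i.e.\ $ |D\cup P|=\gamma n $. At that moment property (2) gives $ e(D\cup P)<(1+\epsilon)\gamma n $ (the touched set has $ \gamma n\le\delta_1 n $ vertices), while property (4) applied to $ S=D\cup P $, together with $ e(D,U)=0 $, gives $ e(D\cup P,\,U)=e(P,U)<\tfrac{\epsilon\delta_2}{2}n $; hence at most $ (1+\epsilon)\gamma n+\tfrac{\epsilon\delta_2}{2}n $ edges of $ C^* $ meet the touched set, so, with $ v_U:=v_{C^*}-\gamma n>(\delta_1-\gamma)n $,
\[ e(U)>e_{C^*}-(1+\epsilon)\gamma n-\tfrac{\epsilon\delta_2}{2}n\ge(1+\epsilon)v_U-\tfrac{\epsilon\delta_2}{2}n. \]
Since $ \delta_1>\delta_2 $, for $ \gamma $ sufficiently small in terms of $ \epsilon,\delta_1,\delta_2 $ we have $ \tfrac{\epsilon\delta_2}{2}n<\tfrac{\epsilon}{2}v_U $, so the graph induced on $ U $ has density exceeding $ 1+\tfrac{\epsilon}{2} $; averaging over its components and applying property (3) produces a connected graph $ H\subseteq G $ with $ v_H>\delta_2 n $, $ e_H>(1+\tfrac{\epsilon}{2})v_H $, and no path of length $ \gamma n $ --- already a ``smaller'' instance of the configuration we want to rule out.

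\emph{Step 3 (closing the loop) --- the main obstacle.} It remains to turn the production of $ H $ into an outright contradiction. The natural idea is to iterate Step 2 inside $ H $, and so on; the difficulty is that once we have left $ C^* $ the ambient graph has only $ >\delta_2 n $ vertices, so the absolute error $ \tfrac{\epsilon\delta_2}{2}n $ coming from property (4) is no longer small compared with it, and a crude iteration burns the entire $ \tfrac{\epsilon}{2} $ of density slack in a single further step. The real work is therefore to organise the iteration so that the cumulative property‑(4) error stays controlled --- for example by keeping $ n $ fixed and accounting absolute edge counts against it (removing a total of $ m $ vertices should cost only $ \approx(1+\epsilon)m $ edges plus a bounded number of $ \tfrac{\epsilon\delta_2}{2}n $ terms), or by bounding the number of iterations using that each DFS stack considered is a path of length at least the average degree of the current graph, which remains bounded below. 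I expect this bookkeeping, together with the precise choice of constants --- which is exactly where the strict inequality $ \delta_1>\delta_2 $ and the constant $ \tfrac{\epsilon\delta_2}{2} $ in property (4) are used --- to be the most delicate part of the argument.
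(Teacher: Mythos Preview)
Your proposal is incomplete: Step~3 is explicitly left open, and the obstacle you identify is real rather than cosmetic. After one round you have used property~(3) to pass from density $1+\epsilon$ to $1+\epsilon/2$, but there is no ``level $1+\epsilon/4$'' hypothesis to feed a second round, and the absolute error $\tfrac{\epsilon\delta_2}{2}n$ from property~(4) does not shrink when you pass to the subgraph $H$. There is no evident bookkeeping that rescues this without adding hypotheses. You also quietly assume $\gamma<\delta_1-\delta_2$ (needed for $\tfrac{\epsilon\delta_2}{2}n<\tfrac{\epsilon}{2}v_U$), which the lemma does not give you.

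The paper avoids iteration altogether by organising the DFS differently. After isolating the dense component $C$ (your Step~1), it runs DFS on $C$ and \emph{time-slices} the set of finished vertices into consecutive blocks $S_1,\ldots,S_{t_f}$ of size at most $\delta_2 n$ each, where $t_f=\lceil |C|/(\delta_2 n)\rceil$. Property~(3) applied to every block gives $\sum_i e(S_i)<(1+\epsilon/2)|C|$, so the cross-block edges satisfy $\sum_{i<j} e(S_i,S_j)\ge \tfrac{\epsilon}{2}|C|$. By pigeonhole some index $i_0$ has $\sum_{j>i_0} e(S_{i_0},S_j)\ge \tfrac{\epsilon|C|}{2(t_f-1)}\ge \tfrac{\epsilon\delta_2}{2}n$. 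At the moment $S_{i_0}$ has just finished, all of these edges run from the finished set to vertices not yet finished; since DFS has no edges between finished and unvisited vertices, they all land in the stack $U$. Hence $e(U,V\setminus U)\ge \tfrac{\epsilon\delta_2}{2}n$, and property~(4) forces $|U|>\gamma n$, giving the path. The point is that property~(3) is applied many times in parallel (once per block) rather than once globally, and the pigeonhole step converts the global edge surplus into a single moment where the stack must be long; no recursion is needed.
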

\begin{proof}
	By the first property $ G $ must contain some connected component $ C $ with $ e(C)\geq (1+\epsilon)|C| $. By the second property $ |C|>\delta_1 n $. Consider the DFS algorithm as defined, for example, in \cite{DFS}. As a quick reminder: we take an arbitrary ordering of the vertices of $ G $ and run a DFS exploration on $ G $ by maintaining three sets of vertices: $ S $ --- the vertices we have finished exploring, $ U $ --- a LIFO stack with the vertices we are currently exploring, and $ T $ --- the unvisited vertices. Consider an execution of the DFS algorithm on $ C $, starting with $ S=U=\emptyset $ and $ T=C $, and completing when $ S=C $ and $ U=T=\emptyset $ . We will use the following properties of this algorithm:
	\begin{itemize}
		\item at any given moment there are no edges of $ G $ between $ S $ and $ T $, and
		\item at any given moment $ U $ spans a path in $ C $.
	\end{itemize}
	Set $ t_0=0 $ and $ t_f=\lceil |C|/(\delta_2 n)\rceil $. For any $ 1\leq i\leq t_f-1 $ let $ t_i $ be the moment in which $ |S|=\delta_2ni $ and let $ S_i $, $ 1\leq i\leq t_f $, be the vertices which were added to $ S $ between time $ t_{i-1} $ and $ t_i $. Since $ |S_i|\leq \delta_2 n $ for all $ i $, we get from the third property that 
	\[ \sum_{1\leq i<j\leq t_f} e(S_i,S_j)=e(C)-\sum_{i=1}^{t_f}e(S_i)\geq (1+\epsilon)|C|-(1+\epsilon/2)\sum_{i=1}^{t_f}|S_i|=\dfrac{\epsilon}{2}|C|. \]
	Thus there is $ 1\leq i_0\leq t_f-1 $ such that $ \sum_{i_0<j\leq t_f}e(S_{i_0},S_j)\geq \dfrac{\epsilon|C|}{2(t_f-1)}\geq\dfrac{\epsilon\delta_2 n}{2}  $. Since at time $ t_{i_0} $ all these edges are between $ S_{i_0} $ and $ U $, we get from property 4 that at that moment $ U $ spans a path of length at least $ \gamma n $.
\end{proof}
We now return to the proof of Theorem~\ref{thm:path}(iii). Since the game is bias monotone, we may assume $ q=\dfrac{n}{2(1+\epsilon)}>(1-\epsilon)\dfrac{n}{2} $. Set $ \delta_1=e^{-3/\epsilon-1} $, $ \delta_2=\delta_1^2 $, and $ \gamma=(\epsilon\delta_2)^2 $.
	Define
	\begin{align*}
	\cl{F}_1&:=\{E(H):H\subseteq K_n, v_H\leq \delta_1 n,e_H\geq(1+\epsilon)v_H  \},\\
	\cl{F}_2&:=\{E(H):H\subseteq K_n, v_H\leq \delta_2 n,e_H\geq(1+\epsilon/2)v_H  \},\\
	\cl{F}_3&:=\{E(H): H\subseteq K_n, H=(S\cup(V\backslash S), E), |S|\leq\gamma n,e_H\geq \tfrac{\epsilon\delta_2}{2}n \}.				
	\end{align*}
	We calculate
	\begin{flalign*}
	\Phi(\cl{F}_1)&=\sum_{i=4}^{\delta_1 n}\binom{n}{i}\binom{\binom{i}{2}}{(1+\epsilon)i}(q+1)^{-(1+\epsilon)i}<\sum_{i=4}^{\delta_1 n}\left[\dfrac{en}{i}\left(\dfrac{ei}{2(1+\epsilon)}\right)^{1+\epsilon}\left(\dfrac{n}{2(1+\epsilon)}\right)^{-(1+\epsilon)}  \right]^i\\
	&=\sum_{i=4}^{\delta_1 n}\left[e^{2+\epsilon}\left (\dfrac{i}{n}\right )^\epsilon \right]^i\leq \sum_{i=4}^{\infty}\left(e^{2+\epsilon}\delta_1^\epsilon \right)^i=\sum_{i=4}^{\infty}e^{-i}<0.1;\\
	\Phi(\cl{F}_2)&=\sum_{i=4}^{\delta_2 n}\binom{n}{i}\binom{\binom{i}{2}}{(1+\epsilon/2)i}(q+1)^{-(1+\epsilon/2)i}<\sum_{i=4}^{\delta_2 n}\left[\dfrac{en}{i}\left(\dfrac{ei}{2(1+\epsilon/2)}\right)^{1+\epsilon/2}\left(\dfrac{n}{2(1+\epsilon)}\right)^{-(1+\epsilon/2)}  \right]^i\\
	&=\sum_{i=4}^{\delta_2 n}\left[\left(\dfrac{1+\epsilon}{1+\epsilon/2}\right)^{1+\epsilon/2} e^{2+\epsilon/2}\left (\dfrac{i}{n}\right )^{\epsilon/2} \right]^i\leq\sum_{i=4}^{\delta_2 n}\left[e^{2+\epsilon}\left (\dfrac{i}{n}\right )^{\epsilon/2} \right]^i\leq \sum_{i=4}^{\infty}\left (e^{2+\epsilon}\delta_2^{\epsilon/2}\right )^i\\
	&=\sum_{i=4}^{\infty}e^{-i}<0.1;\\
	\Phi(\cl{F}_3)&=\binom{n}{\gamma n}\binom{\gamma(1-\gamma)n^2}{\tfrac{\epsilon\delta_2}{2}n}(q+1)^{-\tfrac{\epsilon\delta_2}{2}n}\leq \left(\dfrac{e}{\gamma}\right)^{\gamma n}\left(\dfrac{2\gamma(1-\gamma)n}{\epsilon\delta_2}\right)^{\epsilon\delta_2n/2}\left(\dfrac{n}{2(1+\epsilon)}\right)^{-\epsilon\delta_2n/2}\\
	&\leq\left [\left(\dfrac{e}{\epsilon^2\delta_2^2}\right)^{2\epsilon\delta_2}4(1+\epsilon)\epsilon\delta_2\right ]^{\epsilon\delta_2 n/2}=o(1),
	\end{flalign*}	
	where the last equality is for $ \epsilon $ small enough. We get that for $ n $ large enough
	\[ \Phi(\cl{F}_1\cup\cl{F}_2\cup\cl{F}_3)\leq \Phi(\cl{F}_1)+\Phi(\cl{F}_2)+\Phi(\cl{F}_3)<1/2, \]
	and by Theorem~\ref{thm:tool_ES_subset} Client has a strategy to claim a subgraph $ G_C $ which has all the properties of Lemma~\ref{lem:linear_path} and therefore contains a path of length at least $ \gamma n $.
\end{proof}

\begin{proof}[\bfseries Proof of Theorem~\ref{thm:path}(iv)]
	We will use the following lemma.
	\begin{lem}[Lemma 4.4 in \cite{size_ramsey}]\label{lem:size_ramsey}
		Let $ G $ be a graph on $ n $ vertices. Suppose that for every two disjoint sets $ A,B\subset V(G) $ such that $ |A|,|B|\geq k $, there is at least one edge between $ A $ and $ B $. Then $ G $ contains a path of length $ n-2k+1 $.
	\end{lem}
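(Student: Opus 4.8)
The plan is to run a depth-first search (DFS) exploration on all of $G$ and to read off a long path from the stack at a carefully chosen moment, exactly in the spirit of the DFS argument used in the proof of Lemma~\ref{lem:linear_path} above. Throughout the exploration we maintain the three sets $S$ (finished vertices), $U$ (the stack, currently being explored), and $T$ (unvisited vertices), which at every moment partition $V(G)$; we start from $S=U=\emptyset$, $T=V(G)$, we restart the search from an arbitrary unvisited vertex whenever the stack empties, and we terminate once $S=V(G)$. We use the two standard invariants of this process: at every moment $U$ spans a path in $G$ (consecutive stack vertices are joined by DFS-tree edges), and at every moment there is no edge of $G$ between $S$ and $T$. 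The latter holds both inside a component currently being explored (a vertex is moved to $S$ only once it has no unvisited neighbour) and across components (a fully explored component sends no edges to the rest of $G$).

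First dispose of the trivial case: if $n<2k$ then $n-2k+1\le 0$ and there is nothing to prove, so we may assume $n\ge 2k$. The key point is that the hypothesis of the lemma, together with the ``no edge between $S$ and $T$'' invariant, forbids the configuration $|S|\ge k$ and $|T|\ge k$ from ever occurring, since $S$ and $T$ are always disjoint. Now follow $|S|$ and $|T|$ along the execution. An elementary step of the DFS moves a single vertex, so $|T|$ either decreases by $1$ (a vertex is discovered, possibly at a restart) or stays put (a vertex is finished), while $|S|$ either increases by $1$ or stays put, and these two quantities never change in the same step; moreover $|T|$ starts at $n$ and ends at $0$, while $|S|$ starts at $0$ and ends at $n$.

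Let $t_1$ be the first moment at which $|T|\le k-1$. Since $|T|$ starts at $n\ge 2k>k-1$ and drops by at most one per step, we have $|T|=k-1$ exactly at time $t_1$, and $|T|=k$ at the previous step $t_1-1$. The step from $t_1-1$ to $t_1$ discovered a vertex, hence did not change $|S|$, so $|S|$ at time $t_1$ equals $|S|$ at time $t_1-1$; but at time $t_1-1$ we have $|T|=k$, so the forbidden-configuration observation forces $|S|\le k-1$ there, whence $|S|\le k-1$ at time $t_1$ as well. Consequently, at time $t_1$ the stack satisfies $|U|=n-|S|-|T|\ge n-2(k-1)=n-2k+2$, and since $U$ spans a path in $G$ this produces a path on at least $n-2k+2$ vertices, i.e.\ of length at least $n-2k+1$, as claimed.

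The only genuine subtlety is the middle step: pinning down that $|S|$ is still at most $k-1$ precisely when $|T|$ first reaches $k-1$. This rests entirely on the fact that a single DFS step alters only one of $|S|,|T|$, so $|T|$ cannot ``overshoot'' below $k$ while $|S|$ simultaneously grows large in the same step; everything else is bookkeeping, with the disconnected case handled by the sole remark that fully explored components contribute no $S$--$T$ edges.
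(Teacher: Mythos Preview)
Your proof is correct. The paper does not actually supply its own proof of this lemma --- it merely cites it as Lemma~4.4 of \cite{size_ramsey} --- but your DFS argument is precisely the standard proof of that result, and indeed mirrors the DFS machinery the paper itself invokes in the proof of Lemma~\ref{lem:linear_path}.
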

\noindent	Let $ q+1= \epsilon n $, and let $ \delta>0 $ to be determined. Define 
	\[ \mathcal{F}:=\{E_{K_n}(A,B): A\cap B=\emptyset, |A|=|B|=\delta n \}. \]
	If Client wins the $ CW(K_n, \mathcal{F}^*, q) $ game then he has an edge between any two disjoint subsets of size $ \delta n $, and by Lemma~\ref{lem:size_ramsey} he has a path of length $(1-2\delta) n$. So it is enough to verify the condition of Theorem~\ref{thm:tool_transversal}:
	\begin{align*}
	\sum_{A\in\mathcal{F}}e^{-|A|/(q+1)}\leq\binom{n}{\delta n}^2e^{-\delta^2n^2/(q+1)}
	\leq \left[\left(\dfrac{e}{\delta}\right)^2e^{-\delta/\epsilon}  \right]^{\delta n}.
	\end{align*}
	The last expression will be asymptotically small when
	\[ \dfrac{\delta}{\epsilon}>2(1+\ln(1/\delta)), \]
	and this is true for $ \delta=2\epsilon\ln(1/\epsilon) $ and $ \epsilon $ small enough.
\end{proof}

\subsection{The $ H $-game}
\begin{proof}[\bfseries Proof of Proposition~\ref{prp:k-clique lower bound}]
The idea of the proof (suggested by Bednarska-Bzd\c{e}ga) is to use the following theorem of hypergraph containers.
\begin{thm}[implicit in Theorem 2.3 in \cite{h_containers}]\label{thm:hypergraphs}
	Let $ H $ be a graph with at least three vertices. Then there are $ n_0,\delta>0 $ such that for every $ n\geq n_0 $ there is a collection $ \cl{C} $ of subgraphs of $ K_n $ such that 
	\begin{enumerate}
		\item Every $ H $-fee subgraph of $ K_n $ is contained in some $ C\in\cl{C} $.
		\item For every $ C\in\cl{C} $, $ e_C\leq(1-\delta)\binom{n}{2} $.
		\item $ |\cl{C}|\leq n^{n_0n^{2-1/m_2(H)}} $.
	\end{enumerate}
\end{thm}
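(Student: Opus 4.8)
The plan is to realise the $H$-free subgraphs of $K_n$ as the independent sets of a hypergraph and then to invoke the hypergraph container theorem of \cite{h_containers}, checking that its boundedness hypotheses hold with the parameters dictated by $m_2(H)$. We may assume that $H$ has no isolated vertices and that $e_H\ge 2$; if $H$ has at most one edge then every $H$-free spanning subgraph of $K_n$ is empty and a single container suffices. Let $\cl{H}=\cl{H}_n$ be the $e_H$-uniform hypergraph with vertex set $E(K_n)$ whose hyperedges are the edge sets of the copies of $H$ in $K_n$, so that $N:=|V(\cl{H})|=\binom n2$, $e(\cl{H})=\Theta(n^{v_H})$, the average degree of $\cl{H}$ is $d:=e_H\,e(\cl{H})/N=\Theta(n^{v_H-2})$, and a spanning subgraph $F$ of $K_n$ is $H$-free precisely when $E(F)$ is an independent set of $\cl{H}$. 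All constants hidden in $O(\cdot)$, $\Theta(\cdot)$ and in ``$n$ large'' below depend only on $H$.

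\textbf{Verifying the co-degree condition.} For $1\le j\le e_H$ let $\Delta_j=\Delta_j(\cl{H})$ be the largest number of hyperedges of $\cl{H}$ through a fixed set of $j$ vertices of $\cl{H}$. A set $S$ of $j$ edges of $K_n$ spans a subgraph $F_S$ with $e_{F_S}=j$; a copy of $H$ in $K_n$ can contain $S$ only when $F_S$ embeds into $H$, and the number of copies of $H$ through $S$ is then at most (the number of placements of $F_S$ inside $H$, a constant depending on $H$) times $n^{\,v_H-v_{F_S}}$. Hence, writing $v_j:=\min\{v_F: F\subseteq H,\ e_F=j\}$, we get $\Delta_j\le O(n^{\,v_H-v_j})$. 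For $j\ge 2$ any two distinct edges span at least three vertices, so $v_j\ge 3$, and taking $F^\ast\subseteq H$ with $e_{F^\ast}=j$ and $v_{F^\ast}=v_j$ the definition of the maximum $2$-density gives
\[ m_2(H)\ \ge\ d_2(F^\ast)\ =\ \frac{j-1}{v_j-2}, \qquad\text{equivalently}\qquad \frac{v_j-2}{j-1}\ \ge\ \frac{1}{m_2(H)}. \]
Now set $\tau:=C\,n^{-1/m_2(H)}$ for a constant $C=C(H)$ to be chosen. Then for every $2\le j\le e_H$,
\[ \frac{\Delta_j}{\tau^{\,j-1}\,d}\ \le\ O\!\Bigl(\frac{1}{C^{\,j-1}}\Bigr)\cdot n^{\,(j-1)/m_2(H)-(v_j-2)}\ \le\ O\!\Bigl(\frac{1}{C^{\,j-1}}\Bigr), \]
since the exponent of $n$ equals $(j-1)\bigl(1/m_2(H)-(v_j-2)/(j-1)\bigr)\le 0$. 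The boundedness hypothesis of \cite{h_containers} asks (up to absolute constants depending only on $e_H$) that $\Delta_j\le \epsilon_0\,\tau^{\,j-1}d$ for all $2\le j\le e_H$, where $\epsilon_0=\epsilon_0(H)>0$ is a small constant; by the above this holds once $C=C(H)$ is large enough, and for $n$ large $\tau=Cn^{-1/m_2(H)}$ is itself small enough for \cite{h_containers} to apply. (Note also $\Delta_1=\Theta(d)$, so $\cl{H}$ is essentially regular and any near-regularity requirement is satisfied.)

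\textbf{Applying the containers and reading off the parameters.} Fix a small constant $\epsilon=\epsilon(H)>0$, to be specified, and apply the container theorem of \cite{h_containers} to $\cl{H}$ with the $\tau$ above (having chosen $C$ large relative to $\epsilon$). This produces a family $\cl{C}$ of subsets of $E(K_n)$, which we view as spanning subgraphs of $K_n$, together with a map assigning to each independent set $I$ of $\cl{H}$ a fingerprint $T(I)\subseteq I$ with $|T(I)|\le O(\tau N)$ and $I\subseteq C$ for the container $C=C(T(I))\in\cl{C}$, such that $e(\cl{H}[C])\le\epsilon\,e(\cl{H})$ for every $C\in\cl{C}$. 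Since every $H$-free spanning subgraph of $K_n$ corresponds to an independent set of $\cl{H}$, the first conclusion follows. As each container is determined by a fingerprint, a set of at most $O(\tau N)$ vertices of $\cl{H}$,
\[ |\cl{C}|\ \le\ \sum_{i\le O(\tau N)}\binom{N}{i}\ \le\ N^{\,O(\tau N)}\ =\ \exp\!\Bigl(O\bigl(n^{\,2-1/m_2(H)}\ln n\bigr)\Bigr)\ \le\ n^{\,n_0\,n^{\,2-1/m_2(H)}} \]
for a suitable $n_0=n_0(H)$, which is the third conclusion. For the second, by the Erd\H{o}s--Simonovits supersaturation theorem there are $\delta>0$ and $c'>0$, depending only on $H$, such that for $n$ large every $n$-vertex graph with more than $(1-\delta)\binom n2$ edges contains at least $c'n^{v_H}$ copies of $H$ (here one uses that the Tur\'an density of $H$ is strictly below $1$). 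Since $e(\cl{H})\le n^{v_H}$, choosing the constant $\epsilon$ above to be less than $c'$ forces $e_C\le(1-\delta)\binom n2$ for every $C\in\cl{C}$ --- otherwise $\cl{H}[C]$ would contain more than $\epsilon\,e(\cl{H})$ hyperedges. Enlarging $n_0$ so that all the ``$n$ large'' requirements hold completes the proof.

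\textbf{Main obstacle.} The heart of the matter is the co-degree estimate: one has to show that $\cl{H}$ is suitably bounded with $\tau$ of order $n^{-1/m_2(H)}$, and this is exactly where the maximum $2$-density of $H$ enters, via the inequality $(v_j-2)/(j-1)\ge 1/m_2(H)$ for the densest $j$-edge subgraph of $H$, which is what makes the exponent of $n$ in $\Delta_j/(\tau^{\,j-1}d)$ nonpositive for every $2\le j\le e_H$. The remaining steps --- the hypergraph set-up, the (bounded) iteration inside \cite{h_containers} needed to push $e(\cl{H}[C])$ below $\epsilon\,e(\cl{H})$, the counting of fingerprints, and the supersaturation argument converting ``few copies of $H$'' into ``few edges'' --- are routine once the container theorem is used as a black box; the only real care needed is to match the precise form of the hypotheses (the exact co-degree functional, the admissible range of $\tau$, and the fingerprint size) in \cite{h_containers}.
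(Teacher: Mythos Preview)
The paper does not actually supply a proof of this statement: it is quoted as ``implicit in Theorem~2.3 in \cite{h_containers}'' and then used as a black box in the proof of Proposition~\ref{prp:k-clique lower bound}. Your write-up is exactly the standard way one makes this implication explicit --- encode $H$-free subgraphs of $K_n$ as independent sets of the $e_H$-uniform hypergraph of $H$-copies, verify the co-degree condition of the Saxton--Thomason theorem with $\tau=\Theta(n^{-1/m_2(H)})$ via the inequality $(v_j-2)/(j-1)\ge 1/m_2(H)$, read off the fingerprint bound to get $|\cl{C}|\le n^{O(n^{2-1/m_2(H)})}$, and use supersaturation to convert ``each container carries at most $\epsilon e(\cl{H})$ copies of $H$'' into ``each container has at most $(1-\delta)\binom n2$ edges''. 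The computations are correct (in particular the exponent bookkeeping $\Delta_j/(\tau^{j-1}d)\le O(C^{-(j-1)})n^{(j-1)/m_2(H)-(v_j-2)}$ and the count $N^{O(\tau N)}=n^{O(n^{2-1/m_2(H)})}$), and your handling of the degenerate case $e_H\le 1$ is fine. In short, there is nothing to compare against in the paper itself; your argument is the intended derivation and is sound.
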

The lower bound for the $ CW(K_n,H, q) $ game is an easy application of the above theorem and of the criterion for Client's win in Theorem~\ref{thm:tool_transversal}.
Let $ \cl{C} $ be the family of containers for the graph $ H $. It is enough to show that Client can claim at least one edge in every complement graph of $ C\in\cl{C} $. Since the number of edges in any such complement is at least $ \delta\binom{n}{2} $, recalling the bound on $ |\cl{C}| $ we can verify the condition of Theorem~\ref{thm:tool_transversal}.
		\begin{align*}
		\sum_{A\in\cl{F}} e^{-|A|/(q+1)}\leq n^{n_0n^{2-1/m_2(H)}}e^{-\delta\binom{n}{2}/(q+1)}=o(1),
		\end{align*}
		provided $ q\leq cn^{1/m_2(H)}/\ln n $ for some $ c=c(H)>0 $.
\end{proof}
	
\section{The H-Game on $ G_{n,p} $}\label{sec:H-game_on_G_n,p}

We note two well known facts about random graphs which will be used in this section without reference. 
\begin{itemize}
	\item When $ n^{-k/(k-1)}\ll p\ll n^{-(k+1)/k} $ w.h.p. $ G_{n,p} $ is a forest with copies of all trees with at most $ k $ vertices, and no tree with more than $ k $ vertices.
	\item Let $ \alpha, c $ be positive constants. If $ p\leq cn^{-1/\alpha} $ then for any fixed graph $ G $ with $ m(G)>\alpha $ w.h.p. $ G\nsubseteq G_{n,p} $. Put another way, for $ c,\alpha $ and $ p $ as above and any fixed $ k>0 $, w.h.p. any subgraph $ G $ of $ G_{n,p} $ on at most $ k $ vertices has density $ m(G)\leq\alpha $.
\end{itemize}

\subsection{Client's side}
The proof of Maker's side (the 1-statement) in \cite{MBH} relayed on hypergraph containers as an auxiliary tool. That method would have worked here as well. However, we give an alternative proof using another tool --- a variant of the famous K\L{}R conjecture which was proved in (\cite{KLR}). We start with a few definitions.
\begin{defn}
	A bipartite graph between sets $ U $ and $ V $ is $ (\epsilon,d) $\emph{-lower-regular} if, for every $ U'\subseteq U $ and $ V'\subseteq V $ with $ |U'|\geq\epsilon |U| $ and $ |V'|\geq\epsilon |V| $, the density $ d(U',V') $ of edges between $ U' $ and $ V' $ satisfies $ d(U',V')\geq d $.
\end{defn}

Given a graph $ H $ with vertex set $ [k] $ , we denote by $ \cl{G}(H,n,d,\epsilon) $ the collection of all graphs $ G $ with vertex set $ V_1\cup\ldots\cup V_k $, where $ V_1,\ldots,V_k $ are pairwise disjoint sets of size $ n $ each, whose edge sets consists of $ e_H $ different $ (\epsilon,d) $-lower-regular bipartite graphs, one graph between $ V_i $ and $ V_j $ for each $ ij\in E(H) $.\\
For an arbitrary graph $ G $ and $ p\in [0,1] $, we denote by $ G_p $ the random subgraph of $ G $, where each edge of $ G $ is included with probability $ p $ independently of all other edges.

\begin{thm}[implied by Theorem 2.1 in \cite{KLR}]\label{thm:2.1}
	Let $ H $ be an arbitrary graph. For every $ d>0 $, there exist $ \epsilon,C>0 $ such that if $ p\geq Cn^{-1/m_2(H)} $, then the following holds. For every $ G\in\cl{G}(H,n,d,\epsilon) $, w.h.p. the random graph $ G_p $ has the following property: Every subgraph $ G' $ of $ G_p $ in $ \cl{G}(H,n,dp,\epsilon) $ contains a copy of $ H $.
\end{thm}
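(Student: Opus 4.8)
The plan is to deduce Theorem~\ref{thm:2.1} from the cited KŁR-type result, Theorem 2.1 of \cite{KLR}, by a coupling argument together with the fact that the embedding property in question is inherited by subgraphs. In the form I will use it, that theorem guarantees, for every $d>0$, constants $\epsilon,C>0$ such that for $p\geq Cn^{-1/m_2(H)}$ the random $k$-partite graph $\Gamma_p$ --- with $\Gamma=K_{n,\ldots,n}$ the complete $k$-partite graph on parts $V_1,\ldots,V_k$ of size $n$, each edge kept independently with probability $p$ --- w.h.p. has the property that every sub-collection of bipartite graphs $G''\subseteq\Gamma_p$ lying in $\mathcal{G}(H,n,dp,\epsilon)$ contains a partite copy of $H$. (Should the cited statement instead already be phrased for a general regular host, the deduction below is only more direct.)

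First I would fix $G\in\mathcal{G}(H,n,d,\epsilon)$ with parts $V_1,\ldots,V_k$ and view it as a spanning subgraph of $\Gamma$. I would then realise $G_p$ inside $\Gamma_p$ by the obvious coupling: flip independent $p$-biased coins on $E(\Gamma)$ to obtain $\Gamma_p$, and set $G_p:=\Gamma_p\cap E(G)$. This is a faithful copy of $G_p$, and under it $G_p\subseteq\Gamma_p$. Next I would invoke the conclusion of the cited theorem for $\Gamma_p$, which holds w.h.p.: if $G'\subseteq G_p$ is any subgraph with $G'\in\mathcal{G}(H,n,dp,\epsilon)$, then since $G_p\subseteq\Gamma_p$ we also have $G'\subseteq\Gamma_p$, so $G'$ is among the sub-collections to which the cited theorem applies and hence contains a copy of $H$. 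As $G'$ was arbitrary, $G_p$ has precisely the property claimed in Theorem~\ref{thm:2.1}, with $\epsilon$ and $C$ taken to be those returned by \cite{KLR} for the given $d$. Note that in this reduction the lower-regularity of the host $G$ is not used at all --- pure monotonicity $G_p\subseteq\Gamma_p$ does the work --- which is consistent with the theorem being stated for an arbitrary $G\in\mathcal{G}(H,n,d,\epsilon)$ rather than for a specific host.

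The remaining work, and the only place I would be cautious, is parameter bookkeeping, which is also the part most likely to hide a subtlety. One must check that a single $\epsilon$ can play the role both of the regularity parameter constraining the sub-collections $G'$ here and of the parameter furnished by \cite{KLR}; since our statement already uses the same $\epsilon$ for $G$ and $G'$, and the cited theorem constrains only the sub-collection, there is no conflict, though if the cited version tracks a separate host parameter one simply takes that at least as small. One should also confirm that ``copy of $H$'' in \cite{KLR} denotes a partite copy (one vertex in each $V_i$, edges respecting $E(H)$), which is exactly what the later embedding arguments in Section~\ref{sec:H-game_on_G_n,p} need and which only strengthens what we must prove here; and that for $p$ large enough that $dp\geq 1$ the statement is either vacuous or reduces to the classical embedding lemma for $\epsilon$-regular pairs, so no extra case analysis is required.
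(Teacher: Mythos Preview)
The paper does not actually prove this theorem: it is stated as a black box, labelled ``implied by Theorem 2.1 in \cite{KLR}'', and used immediately afterwards without any derivation. So there is no ``paper's own proof'' to compare against; you have supplied an argument where the authors supplied none.

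That said, your derivation is correct and is the natural way to read the word ``implied''. The key observation is that the property in question --- ``every subgraph $G'$ in $\mathcal{G}(H,n,dp,\epsilon)$ contains a copy of $H$'' --- is hereditary: if a graph $Y$ has it and $X\subseteq Y$, then $X$ has it too, simply because every candidate $G'\subseteq X$ is also a subgraph of $Y$. Coupling $G_p$ as $G\cap\Gamma_p$ with $\Gamma$ the complete $k$-partite host therefore transfers the conclusion from $\Gamma_p$ to $G_p$ instantly, and your remark that the lower-regularity of $G$ itself plays no role is exactly right. Your caveats about parameter bookkeeping and the meaning of ``copy'' (partite versus arbitrary) are appropriate but do not hide any genuine obstacle here.
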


We are ready to prove Client's side in Theorem~\ref{thm:CW_random_H_biased}.
\begin{proof}[\bfseries Proof of the 1-statement of Theorem~\ref{thm:CW_random_H_biased}]
	Choose $ d $ such that $ d<1/(q+1) $ and
	\[ \dfrac{d}{1/2-d}(1+\ln 2 -\ln d)<\dfrac{1}{q+1}	 \]
	(this is possible since the LHS goes to 0 when $ d $ goes to 0), and let $ \epsilon=\epsilon(H,d) $ be that of Theorem~\ref{thm:2.1}. Let $ k=|V(H)| $. At the beginning of the game Client will fix an equipartition of the vertices of $ G_{n,p} $ to $ k $ parts $ V_1,\ldots,V_k $. He will then follow a strategy which guarantees that by the end of the game his graph, $ G_C $, when restricted to any pair of parts, is $ (\epsilon,dp) $-lower-regular. To see that this means that w.h.p. he will claim a copy of $ H $, take $ G $ in Theorem~\ref{thm:2.1} to be the complete $ k $-partite graph on $ V_1,\ldots,V_k $. Clearly $ G\in\cl{G}(H,n/k,d,\epsilon) $ and $ G_C\cap G\in \cl{G}(H,n/k,dp,\epsilon) $, and we can consider $ G_p $ as $ G\cap G_{n,p} $.  It remains to show that Client indeed has such a strategy. To this end we will use Theorem~\ref{thm:tool_transversal}, and define
	\[ \cl{F}:=\{F\subseteq E_{G_{n,p}}(U_1,U_2): U_1\cap U_2=\emptyset, |U_1|=|U_2|=\epsilon n/k, |F|\geq e_{G_{n,p}}(U_1,U_2)-dp\epsilon^2n^2/k^2  \} .\]
	Clearly, if Client wins the $ CW(G_{n,p},\cl{F}^*, q) $ game then he has achieved his goal. It remains to verify the condition of Theorem~\ref{thm:tool_transversal}. Indeed, since w.h.p. the number of edges between any two disjoint subsets of size $ \epsilon n/k $ will satisfy
	\[ \tfrac{1}{2}p\epsilon^2 n^2/k^2\leq e_{G_{n,p}}(U_1,U_2)\leq 2p\epsilon^2 n^2/k^2, \]
	we get
	\begin{align*}
	\Phi(\cl{F})&\leq \binom{n}{2\epsilon n/k}\binom{2\epsilon n/k}{\epsilon n/k}\binom{2p\epsilon^2 n^2/k^2 }{dp\epsilon^2n^2/k^2}e^{-(\tfrac{1}{2}-d)p\epsilon^2n^2/(q+1)k^2}\\
	&\leq \left(\dfrac{ek}{2\epsilon}\right)^{2\epsilon n/k} 2^{2\epsilon n/k} \left (\dfrac{2e}{d}\right )^{dp\epsilon^2n^2/k^2}e^{-(\tfrac{1}{2}-d)p\epsilon^2n^2/(q+1)k^2}\\
	&\leq C^n_{\epsilon,k}\exp\left (\dfrac{p\epsilon^2n^2}{k^2}(d(1+\ln 2 -\ln d)-(1/2-d)/(q+1))\right )\to 0,
	\end{align*}
	by our assumption on $ d $.
\end{proof}

\subsection{Waiter's side}
We start with the case of a graph $ H $ for which there exists $ H'\subseteq H $ such that $ d_2(H')=m_2(H) $, $ H' $ is strictly 2-balanced and it is not a tree or a triangle. Due to the monotonicity of the Client-Waiter game, it is enough to consider the unbiased ($ q=1 $) case. {Moreover, it is enough to show that Waiter can prevent Client from claiming a copy of $ H' $, and so we may assume that $ H'=H $.  Our proof follows very closely that of Theorem 2 in \cite{MBH}. We start with a general sufficient condition for Waiter's win.
\begin{prop}\label{prp:random_waiter_criterion}
	Let $ H $ be a strictly 2-balanced graph which is neither a tree nor a triangle. If $ G $ is a graph such that $ m(G)\leq m_2(H) $, then Waiter has a winning strategy for the $ CW(G,H,1) $ game.
\end{prop}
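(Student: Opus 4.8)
The plan is to mimic the potential-function argument of Nenadov–Steger–Stojaković for Maker–Breaker, adapting it to the Client–Waiter setting where Waiter controls which pairs are offered. Fix the strictly 2-balanced graph $H$ with $d_2(H)=m_2(H)=:m_2$, and a host graph $G$ with $m(G)\le m_2$. Waiter's goal is to claim, for every potential copy of $H$ in $G$, at least one of its edges before Client completes it; equivalently, Waiter plays to win the transversal game on the family $\cl{F}$ of edge sets of copies of $H$ in $G$. The natural approach is a weight/danger function on Client's current graph that counts "almost-completed" copies of $H$, appropriately discounted. First I would set up, after each move, a potential $\Phi$ that sums over all subgraphs $F\subseteq G$ that are isomorphic to a subgraph of $H$ and already entirely claimed by Client, a term of the form $\mu^{e_H-e_F}$ (or a similar decreasing function of the number of missing edges), where $\mu>1$ is a constant to be chosen; the key structural input is that because $H$ is strictly 2-balanced and $m(G)\le m_2(H)$, any partially-built copy has relatively few ways to extend, so $\Phi$ stays bounded.

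The core of the argument is the move analysis. When it is Waiter's turn, he offers Client a pair of free edges $\{e_1,e_2\}$; Client takes one, say $e_1$, and Waiter gets the other. I would choose the pair so as to control $\Phi$: offer the two free edges whose addition to Client's graph would cause the largest increases $\delta_1,\delta_2$ in $\Phi$, so that after the move the increase is $\delta_1$ (the one Client picks) but Waiter has \emph{removed} $e_2$ from play, which kills all the currently-dangerous copies through $e_2$ and decreases $\Phi$ by an amount comparable to $\delta_2$. Since Client picks the edge, he will take the one with the larger potential gain, so one must show that even in that worst case the net change is controlled — this is exactly where strict 2-balancedness of $H$, the bound $m(G)\le m_2(H)$, and the choice of $\mu$ enter, giving an inequality of the form $\delta_{\max} \le (\text{something summable})$. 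One then shows $\Phi$ never reaches the threshold value (roughly $\mu^{0}=1$, the contribution of a complete copy of $H$), so Client never finishes a copy. I expect the main obstacle to be precisely this step: verifying that Waiter's greedy pairing keeps the potential below threshold, i.e. that the "bad" increase Client can force is always dominated, which requires a careful count of extensions of a fixed partial subgraph of $H$ inside $G$ using the density hypothesis; the triangle and tree cases are excluded precisely because there the extension counts degenerate and the potential argument fails (a tree has $d_2=0$ issues and $K_3$ is too small for Waiter to get ahead with bias $1$).

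A secondary point to handle carefully is the initialization and the bookkeeping of which subgraphs contribute: one restricts to copies of proper subgraphs $H''\subsetneq H$ with $v_{H''}\ge 3$ (so $d_2(H'')<m_2(H)$ strictly, by strict 2-balancedness), and uses $m(G)\le m_2(H)$ to bound, for each such $H''$ already present in Client's graph, the number of edge-disjoint ways it sits inside a copy of $H$. I would state a short counting lemma to this effect first, then run the potential argument. I would also note explicitly that it suffices to treat $q=1$ by bias monotonicity of the Client–Waiter game, and that preventing a copy of the strictly 2-balanced $H'\subseteq H$ with $d_2(H')=m_2(H)$ suffices to prevent a copy of $H$, which is why we may assume $H'=H$ throughout.
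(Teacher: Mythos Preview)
Your plan does not match the paper's proof, and the step you yourself flag as the ``main obstacle'' is not a detail to fill in but the entire content, and I do not see how to carry it out. The paper's argument is purely structural and contains no potential function: it proves two short lemmas --- one saying Waiter wins whenever $\lceil ar(G)/2\rceil < ar(H)$ (decompose $G$ into $\lceil ar(G)\rceil$ forests via Nash--Williams, pair them, and use that on a union of two edge-disjoint forests Waiter can force Client's graph to be a forest), the other saying Waiter wins whenever $\lceil m(G)/2\rceil < m(H)$ (orient $G$ with maximum out-degree $\lceil m(G)\rceil$ via Hall's theorem and have Waiter offer the out-edges of each vertex in pairs) --- and then runs a case analysis on $\lfloor m_2(H)\rfloor$ and its fractional part, exactly following Theorem~18 of \cite{MBH}. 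Your attribution of a potential-function method to Nenadov--Steger--Stojakovi\'c for this particular statement is therefore off; their proof of the corresponding Maker--Breaker proposition is the same structural one.

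The specific gap in your sketch is the claim that ``because $H$ is strictly 2-balanced and $m(G)\le m_2(H)$, any partially-built copy has relatively few ways to extend''. The hypothesis $m(G)\le m_2(H)$ bounds $e(G')/v(G')$ for every subgraph $G'$; it does \emph{not} bound the number of copies of $H$ in $G$, nor the number of extensions of a fixed partial copy (take $G$ to be a disjoint union of $N$ copies of any fixed graph containing $H$: the density stays the same while all such counts blow up with $N$). So there is no evident way to control either the initial value of your potential $\Phi$ or the per-move increments $\delta_i$ from the density hypothesis alone. A weight-function proof of Waiter's win here would need an additional idea --- essentially a reduction to a local, bounded-size game, which is precisely what the orientation and arboricity decompositions in the paper supply --- and without that reduction the greedy pairing you propose has no reason to keep $\Phi$ below threshold.
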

\begin{proof}
	The \emph{arboricity} of a graph $ G $ is defined by
	\[ ar(G)=\max\limits_{G'\subseteq G}\dfrac{e(G')}{v(G')-1}. \]
	The Nash-Williams arboricity theorem (\cite{NW}) states that any graph $ G $ can be decomposed into $ \lceil ar(G)\rceil  $ edge-disjoint forests.
\begin{lem}\label{lem:arb}
	Let $ G,H $ be graphs such that 
	\[ \left\lceil\dfrac{ar(G)}{2} \right\rceil<ar(H),  \]
	then Waiter has a winning strategy for the $ CW(G,H,1) $ game.
\end{lem}
\begin{proof}
	Set $ k=\left\lceil\dfrac{ar(G)}{2} \right\rceil $, and partition $ E(G) $ into $ 2k $ edge-disjoint forests. Divide these forests to pairs. By Theorem 2 in \cite{CP_Games} Waiter can force Client's graph to be a forest when playing on the edges of a union of two edge-disjoint forests. Thus, when playing on $ G $ Waiter has a strategy to force Client's graph to be a union of $ k $ edge disjoint forests. For any subset $ S\subseteq V(G) $, the number of Client's edges spanned by $ S $ will be at most $ k(|S|-1) $, which means that Client's graph has arboricity at most $ k $, hence it cannot contain $ H $.
\end{proof}
\begin{lem}\label{lem:dens}
	Let $ G,H $ be graphs such that 
	\[ \left\lceil\dfrac{m(G)}{2} \right\rceil<m(H),  \]
	then Waiter has a winning strategy for the $ CW(G,H,1) $ game.
\end{lem}
\begin{proof}
	We first orient the edges of $ G $ in the following manner. Set $ k=\lceil m(G)\rceil $. Construct a bipartite graph between $ E(G) $ and $ k $ copies of $ V(G) $ and connect each edge to all the copies of the vertices which are incident to it. The graph satisfies Hall's condition with respect to $ E(G) $, hence we have a matching which covers $ E(G) $. For any $ v\in V $ orient the edges of $ v $ such that $ v $ is the source of $ e $ if and only if $ e $ is connected to a copy of $ v $ in this matching. Since there are $ k $ copies of $ v $, its out-degree will be at most $ k $. Now Waiter can then play on each vertex at a time, offering only pairs of edges for which the current vertex is a source. The maximum out-degree in Client's graph will then be at most $ \left\lceil\dfrac{m(G)}{2} \right\rceil $, which means that its maximal density is lower than $ m(H) $, and it certainly does not contain $ H $.
\end{proof}	
To prove the proposition we consider several cases of maximal 2-density of $ H $ and use the two lemmas to show that in any case Waiter has a winning strategy. Since this is practically the same as in Theorem 18 in \cite{MBH} we omit the details. The inquisitive reader can find them in Appendix~\ref{apx:A}.

\end{proof}
Consider the game $ CW(G,H,1) $ on the edges of some arbitrary graph $ G $. Certainly, any edge in $ G $ which does not take part in any copy of $ H $ is irrelevant to the outcome of the game. Moreover, if some copy of $ H $ in $ G $ has two edges neither of which takes part in another copy of $ H $, then Waiter can offer these two edges in a single turn, thus preventing Client from claiming this copy of $ H $ while not risking any other copy. This leads to the following definition.
\begin{defn}
	A $ H $-core of $ G $ is a maximal subgraph $ G'\subseteq G $ such that 
	\begin{itemize}
		\item every edge of $ G' $ is contained in at least one copy of $ H $ in $ G' $, and
		\item every $ H $-copy on $ G' $ has at most one edge which does not take part in another $ H $-copy.
	\end{itemize}
\end{defn}
	The exact construction and a proof that the $ H $-core is unique can be found in \cite{MBH}. By the discussion above Waiter wins the  $ CW(G,H,1) $ game if and only if he wins  $ CW(G',H,1) $ where $ G' $ is the $ H $-core of $ G $. Furthermore, it is enough to show that Waiter has a winning strategy for bi-connected (2-connected) components of $ G' $, since these do not share edges (and in particular they do not share $ H $-copies). 
\begin{lem}[Lemma 23 in \cite{MBH}]
	Let $ H $ be a strictly 2-balanced graph which is not a tree or a triangle. Then there exist constants $ c>0 $ and $ L>0 $ such that w.h.p. every bi-connected component of the $ H $-core of $ G_{n,p} $ has size at most $ L $, provided that $ p\leq cn^{-1/m_2(H)} $.
\end{lem}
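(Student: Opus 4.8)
The plan is to split the claim into a deterministic structural statement about $H$-cores and a routine first-moment estimate. For the probabilistic half, recall (as noted earlier in this section) that if $p\le cn^{-1/m_2(H)}$ then w.h.p.\ every subgraph of $G_{n,p}$ on a bounded number of vertices has maximum density at most $m_2(H)$; in particular, for any fixed constant $L$, w.h.p.\ $G_{n,p}$ contains no subgraph $J$ with $v_J\le L$ and $e_J>m_2(H)\,v_J$. Hence it suffices to prove the \textbf{deterministic claim}: there is $L=L(H)$ such that for every graph $G$, every bi-connected component $B$ of the $H$-core of $G$ with more than $L$ vertices contains a subgraph $J$ with $v_J\le L$ and $e_J>m_2(H)\,v_J$. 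Then any bi-connected component of the $H$-core of $G_{n,p}$ with more than $L$ vertices would embed such a dense $J$ into $G_{n,p}$, which w.h.p.\ does not happen.

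To prove the deterministic claim I would set up a potential argument. Write $m_2(H)=d_2(H)=(e_H-1)/(v_H-2)$; since $H$ is not a forest we have $e_H\ge v_H$, hence $m_2(H)>1$, and since $H$ is neither a tree nor a triangle we have $e_H\ge 4$. Fix a bi-connected component $B$ of an $H$-core. Every edge of $B$ lies in a copy of $H$ inside $B$, so $B$ is a union of copies $H_1,\dots,H_m$; using the $2$-connectivity of $B$ and of each $H_i$ (and the $H$-core structure) one orders them so that each partial union $F_i:=H_1\cup\cdots\cup H_i$ is $2$-connected and each $H_i$ contributes at least one new edge. Consider $\phi(F):=e(F)-m_2(H)\,v(F)$. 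A direct computation from the fact that $H$ is \emph{strictly} $2$-balanced shows that the increment $\phi(F_i)-\phi(F_{i-1})$ is always nonnegative, and is at least $\rho_0:=1/(v_H-2)>0$ unless step $i$ is an \emph{edge-attachment} (i.e.\ $H_i$ meets $F_{i-1}$ in exactly two vertices spanning one edge), in which case the increment is exactly $0$. Since $\phi(F_1)=e_H-m_2(H)\,v_H<0$, the potential becomes positive as soon as more than $|\phi(F_1)|/\rho_0$ non-edge-attachment steps have occurred, and at that moment the corresponding partial union $J$ satisfies $e_J>m_2(H)\,v_J$.

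The remaining — and crucial — point is that the $H$-core condition forbids edge-attachments from accumulating without a comparable number of non-edge-attachments, and does so \emph{locally}, so that a \emph{bounded-size} witness $J$ appears. Concretely, an edge-attachment exposes $e_H-1\ge 3$ fresh edges of the new copy, and all but the one edge that the $H$-core allows to be private must be reclaimed by later copies; a later copy reclaiming two or more of them necessarily shares at least three vertices with the current union, i.e.\ is a non-edge-attachment. A debt count along these lines — each edge-attachment incurs at least $e_H-2$ units of debt, repayable only by later non-edge-attachments, each of which repays a bounded amount — bounds the number of edge-attachments by $O(1)$ times the number of non-edge-attachments; after arranging the build-up so that debts are discharged promptly, one truncates the construction at a partial union $J$ of bounded size that has already seen more than $|\phi(F_1)|/\rho_0$ non-edge-attachments, whence $e_J>m_2(H)\,v_J$. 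Taking $L$ larger than the size of such a $J$ gives the deterministic claim, hence the lemma.

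The hard part is exactly this last step: turning ``each copy has at most one private edge'' together with ``$H$ is not a triangle'' and $2$-connectivity into a quantitative, \emph{local} bound that produces a bounded-size dense subgraph, rather than merely $e(B)>m_2(H)\,v(B)$ for the (unbounded) component $B$ itself — the latter being useless against the first-moment bound, which only applies to fixed graphs on boundedly many vertices. This is precisely the content of Lemma~23 of \cite{MBH}, whose argument we follow. The structural preliminaries invoked above — that a strictly $2$-balanced non-forest is $2$-connected, and that the $H$-copies of a bi-connected component of an $H$-core admit a $2$-connectivity-preserving ordering — are elementary but depend on the $H$-core condition and should be established first.
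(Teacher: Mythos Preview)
The paper does not prove this lemma; it is quoted verbatim as Lemma~23 of \cite{MBH} and used as a black box. So there is no proof in the paper to compare your attempt against. What the paper \emph{does} contain is a proof of the analogous statement for $H=K_3$ with the modified core (Lemma~\ref{lem: bound_core} in the appendix), and that proof is a useful benchmark.

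Your sketch has the right skeleton --- build a bi-connected component of the core one $H$-copy at a time, classify attachments as ``edge-attachments'' (which leave $\phi(F)=e(F)-m_2(H)\,v(F)$ unchanged) versus all others (which increase $\phi$ by at least some fixed $\rho_0>0$, by strict $2$-balancedness), and use the $H$-core condition to force enough of the latter. But the step you yourself flag as ``the hard part'' is the entire content of the lemma, and you do not carry it out: you assert that a debt count bounds edge-attachments locally by non-edge-attachments and then defer to Lemma~23 of \cite{MBH} --- which is circular, since that is precisely the statement under consideration. As written, this is a gap, not a proof.

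There is also a structural wrinkle in your reduction. You try to isolate a \emph{purely deterministic} claim (``every large bi-connected component of an $H$-core in any graph contains a bounded-size subgraph of density $>m_2(H)$'') and only afterwards invoke probability. The argument in \cite{MBH} --- and the $K_3$ analogue in the appendix here --- does not split this way. It runs a first-moment bound directly on the exploration sequences: regular attachments each contribute a factor $O(n^{v_H-2}p^{e_H-1})=O(1)$ to the expected count, degenerate attachments each contribute a factor polynomially small in $n$, and a deterministic structural inequality (the analogue of Claim~\ref{clm:fully_open}) shows that in a completed core component the number of regular steps is bounded by a constant times the number of degenerate steps. One then separately kills the expected number of sequences with many degenerate steps and the expected number of long sequences with few degenerate steps. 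Your purely deterministic claim may be true, but it is not what \cite{MBH} proves, and extracting a bounded-size dense witness deterministically requires exactly the local debt-discharging argument you leave unwritten. If you intend to follow \cite{MBH}, drop the deterministic reduction and run the first-moment argument on the build-up directly.
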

We can now finish the proof of the 0-statement of Theorem~\ref{thm:CW_random_H_biased} for $ H\neq K_3$. Set $ G=G_{n,p} $ with $ p=cn^{-1/m_2(H)} $. By the above lemma all the bi-connected components of the $ H $-core of $ G $ will be of size at most $ L $. By a well known property of $ G_{n,p} $ (as was mentioned in the beginning of this section), w.h.p. all the bi-connected components will be with maximal density at most $ m_2(H) $ and by Proposition~\ref{prp:random_waiter_criterion} Waiter has a winning strategy while playing on each bi-connected component, and thus when playing on all of the $ H $-core, and indeed on all of $ G $.\\

Next we turn to the case where $ H=K_3 $ and $ q\geq 2 $. The Client's side was covered in the proof of the 1-statement, it remains to show the following:
\begin{prop}
	There is some constant $ c>0 $ such that w.h.p. Waiter wins the $ CW(G_{n,p},K_3,2) $ game when $ p=cn^{-1/2} $. 
\end{prop}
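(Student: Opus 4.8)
The plan is to mimic the structure used for the $ H \neq K_3 $ case, but exploit the extra bias $ q = 2 $ to handle the troublesome small $ 2$-connected components that arise when $ H = K_3 $. Set $ p = cn^{-1/2} $ with $ c $ a small constant to be chosen. First I would recall that the triangle-game $ H $-core construction of \cite{MBH} still applies verbatim with $ H = K_3 $: Waiter wins $ CW(G_{n,p}, K_3, 2) $ if and only if he wins on the $ K_3 $-core $ G' $ of $ G_{n,p} $, and it suffices to give Waiter a winning strategy on each $ 2$-connected component of $ G' $ separately (these components are edge-disjoint and do not share triangles). The reason $ q = 1 $ failed for $ K_3 $ is precisely that $ m_2(K_3) = 2 $ and there exist arbitrarily large $ 2$-connected triangle-rich graphs $ G $ with $ m(G) = 2 $ on which Waiter (with bias $1$) cannot prevent a triangle; equivalently $ K_5 - e $ already forces a Client triangle in the unbiased game. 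So the whole point is that with bias $ 2 $ the relevant arboricity/orientation arguments become strong enough.

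The key step is a bias-$2$ analogue of Proposition~\ref{prp:random_waiter_criterion}: if $ G $ is a graph with $ m(G) \le m_2(K_3) = 2 $ (more generally with $ m(G) $ bounded by some absolute constant — see below), then Waiter wins $ CW(G, K_3, 2) $. To prove this I would redo the orientation argument of Lemma~\ref{lem:dens} with the bias-$2$ twist: since $ m(G) \le 2 $, the arboricity bound gives a decomposition of $ E(G) $ into a bounded number of forests, and more usefully an orientation of $ E(G) $ with maximum out-degree at most $ \lceil m(G) \rceil \le 2 $. Playing one source-vertex at a time and offering, in each turn, all (at most $2$) still-free out-edges of the current vertex in a single move of size $ \le q+1 = 3 $, Waiter forces Client's graph to have maximum out-degree $ 0 $ — i.e. Client claims no edge out of any vertex under this orientation — which is absurd, so instead one arranges that Client claims at most... more carefully: with out-degree $ \le 2 $ and bias $ 2 $, Waiter offers all out-edges of $ v $ at once and Client gets exactly one, so Client's out-degree is $ \le 1 $ at every vertex, hence Client's graph has at most $ |V| $ edges in any subgraph spanned by $ |V| $ vertices is too weak — the correct conclusion is that Client's graph has maximum out-degree $1$, so every subgraph $ S $ spans at most $ |S| $ Client-edges, so Client's graph has no subgraph of density $ > 1 = d_2(K_3) $... which still does not exclude $ K_3 $ since $ d(K_3) = 1 $. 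This is the genuine subtlety, so the orientation must be refined: one should instead 2-colour/partition the oriented out-stars and use that a Client graph which is a union of one forest-like piece cannot contain $ K_3 $; I would route this through the Nash-Williams decomposition exactly as in Lemma~\ref{lem:arb}, observing $ \lceil ar(G)/q \rceil = \lceil ar(G)/2 \rceil = 1 < ar(K_3) = 3/2 $ when $ ar(G) \le 2 $, and invoking the bias-$2$ version of Theorem 2 of \cite{CP_Games} (Waiter can force Client's graph to be a forest on a union of $ q+1 = 3 $ edge-disjoint forests, or at least on two of them) to conclude Client's graph is a forest and hence $ K_3 $-free.

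Finally I would combine this with a size bound on the $ 2$-connected components: a bias-$2$ analogue of Lemma 23 of \cite{MBH} gives constants $ c, L > 0 $ such that w.h.p. every $ 2$-connected component of the $ K_3 $-core of $ G_{n,p} $ with $ p \le cn^{-1/2} $ has at most $ L $ vertices; its proof for $ K_3 $ is a first-moment computation over $ 2$-connected triangle-cores that goes through unchanged (the relevant density threshold $ m_2(K_3) = 2 $ is unaffected by the bias). Then w.h.p. each such component has $ m(G) \le 2 $ by the standard random-graph fact quoted at the start of Section~\ref{sec:H-game_on_G_n,p}, so the bias-$2$ criterion applies componentwise, Waiter wins on each $ 2$-connected component, hence on the whole $ K_3 $-core, hence on all of $ G_{n,p} $. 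The main obstacle is the refined orientation/forcing argument: unlike the $ H \neq K_3 $ case, bounding Client's out-degree alone does not kill $ K_3 $ (since $ K_3 $ has density exactly $ 1 $), so one must genuinely force Client's graph to be acyclic on the relevant bounded-density pieces, which is where the $ q = 2 $ forest-forcing result of \cite{CP_Games} is essential and must be applied carefully.
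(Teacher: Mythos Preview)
Your proposal has two genuine gaps, one of which is fatal.

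\textbf{The structural lemma does not carry over.} You assert that Lemma~23 of \cite{MBH} --- the bound on the size of bi-connected components of the $H$-core --- ``goes through unchanged'' for $H=K_3$. It does not, and both \cite{MBH} and the present paper say so explicitly: the original $H$-core argument fails for $K_3$, and indeed it must fail, since otherwise the same reduction would let Waiter win $CW(G_{n,p},K_3,1)$ at $p=cn^{-1/2}$, contradicting the $K_5-e$ threshold of \cite{MBClique}. The bias plays no role in the $H$-core definition, so ``bias-$2$ analogue'' is not a meaningful fix. The paper handles this by \emph{strengthening} the core definition for triangles: in addition to the two standard conditions, every half-open edge (one lying in exactly two triangles) must lie in at least one triangle with no open edge. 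This extra condition is what makes the first-moment/exploration argument (Lemma~\ref{lem: bound_core}) go through; without it the bi-connected components are not bounded. You also need a matching claim that Waiter can reduce the game to this \emph{modified} core, which requires using the bias $q=2$ to kill the half-open configurations during the reduction.

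\textbf{The density criterion is not established by arboricity.} Your route to ``$m(G)\le 2\Rightarrow$ Waiter wins $CW(G,K_3,2)$'' via Lemma~\ref{lem:arb}/\ref{lem:dens} does not close. First, $m(G)\le 2$ does not give $ar(G)\le 2$: already $K_5$ has $m(K_5)=2$ but $ar(K_5)=5/2$, so your inequality $\lceil ar(G)/2\rceil=1<ar(K_3)=3/2$ can fail. Second, the ``bias-$2$ version of Theorem~2 of \cite{CP_Games}'' (Waiter forces a forest on a union of three forests) is not a cited result, and the weaker bias-$1$ statement only yields that Client's graph is a union of two forests, which can still contain $K_3$. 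The paper proves the criterion by a direct minimal-counterexample case analysis: it invokes Theorem~1.3 of \cite{Man_waiter} (Waiter forces acyclicity on $K_6$ with bias $2$) to handle $v_G\le 6$, then shows that a minimal $G$ with $m(G)\le 2$ on which Client wins must be $4$-regular, and rules out all local configurations around a vertex by explicit three-edge offers. This case analysis is where the bias $q=2$ is actually used; an arboricity shortcut does not replace it.
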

\begin{proof}
We will prove two lemmas --- the first will show that Client can only win on graphs with maximum density higher than 2, and the second will show that when $ p=cn^{-1/2} $, if Client wins on $ G_{n,p} $ then w.h.p. he wins on some subgraph of bounded order.
\begin{lem}\label{lem: Waiter_winning_riterion}
	Let $ G $ be a graph with $ m(G)\leq 2 $, then Waiter has a winning strategy in the $ CW(G, K_3,2) $ game.
\end{lem}
\begin{proof}
	Suppose to the contrary that $ G $ is a minimal graph such that $ m(G)\leq2 $ and Client wins the game on $ G $. By Theorem~1.3 in \cite{Man_waiter} Waiter can force Client's graph to be acyclic when playing on $ K_6 $ with bias 2. We may therefore assume that $ v_G>6 $. Let $ A\subset V(G) $ be a proper subset, and define $ H=G[A] $ and $ \hat{H}=G[V\backslash A] $. We must have that $ e(A, V\backslash A)\geq 4 $, otherwise Waiter can play on $ H $ and then on $ \hat{H} $ (winning on both by the minimality of $ G $), and then offer all the edges $ E(A,V\backslash A) $ and Client will not claim a triangle. In particular $ \delta(G)\geq 4 $. But since $ m(G)\leq 2 $, it must be that $ m(G)=2 $ and $ G $ is 4-regular. This leads to
	\[ e(A,V\backslash A)=4v_H-2e_H\geq 4\Longrightarrow e_H\leq 2(v_H-1).\tag{*}\label{eq:*} \]
	Let $ v_0 $ be an arbitrary vertex, and let $ N(v_0)=\{v_1,v_2,v_3,v_4\} $. Denote $ H=G[\{v_0\}\cup N(v_0)] $ and $ \hat{H}=G[V\backslash (\{v_0\}\cup N(v_0))] $. We claim that $ G[N(v_0)] $ must be a connected graph, for otherwise we can partition $ N(v_0) $ to two parts, $ A,B $, each with size at most 3, and $ E(A,B)=\emptyset $. Waiter can then play his winning strategy on $ G\backslash\{v_0\}, $ then offer $ E(v_0,A) $, and on the last turn he will offer $ E(v_0,B) $. It is easy to see that in this case Client will not claim a triangle. From this reasoning together with (\ref{eq:*}) we deduce that $  e_H\in\{7,8\} $. We now consider several cases. In each case we show that Waiter, after playing his winning strategies on $ H $ and on $ \hat{H} $, has a strategy to offer the remaining free edges such that Client will not claim a triangle.
	\begin{enumerate}
		\item Suppose $ e_H=8 $ and $ G[N(v_0)] $ is isomorphic to $ C_4 $ ($ G[N(v_0)]\cong C_4 $). Then every vertex in $H $ has at most one edge connecting it to $ \hat{H} $.	Suppose there is $ u\in \hat{H} $ such that $ e(u,H)=4 $. Then $ G[H\cup\{u\}] $ has 12 edges which violates (\ref{eq:*}). This means that Waiter (after having played and won on $ H $ and $ \hat{H} $) can just offer in each turn all the free edges incident to some vertex in $ \hat{H} $. Client will not claim a triangle since he will not have a vertex with degree higher than one in the cut between $ H $ and $ \hat{H} $.
		\item Suppose $ e_H=8 $ and $ G[N(v_0)]\cong K_3+e $. Then there is only one vertex in $ H $ with degree 2, let it be $ v_1 $. Waiter will offers all free edges incident to $ v_1 $, and then the remaining (two) edges. Since the two vertices in $ H $ which are connected to $ v_1 $ have degree 4 in $ H $, Client will not be able to claim a triangle.
		\item Suppose $ e_H=7 $ and $ G[N(v_0)]\cong P_4 $. Suppose the path is $ \{v_1,v_2,v_3,v_4\} $. As in case 1, there is no $ u\in \hat{H} $ such that $ e(u,H)=4 $. We have 3 sub-cases.
		\begin{enumerate}
			\item There is $ u\in \hat{H} $ such that $ u$ is connected to $ v_1,v_2,v_3 $. Then the graph $ G[\{v_0,v_1,v_2,v_3,u\}] $ is isomorphic to $ H $ of case 1. The case where $ u $ is connected to $ v_2,v_3,v_4 $ is treated similarly.
			\item There is $ u\in \hat{H} $ such that $ u $ is connected to $ v_1,v_3,v_4 $. Waiter will offer $ (u,v_3), (u,v_4) $ and the other free edge incident to $ v_4 $, and in the next turn he can safely offer the remaining free edges. The case where $ u $ is connected to $ v_1,v_2,v_4 $ is treated similarly.
			\item We can assume that for any vertex $ u\in \hat{H} $, $ e(u,H)\leq 2 $. If there is $ u\in \hat{H} $ which is connected to both $ v_1,v_2 $ then Waiter will offer these edges and the other free edge of $ v_1 $, otherwise he will offer just the free edge of $ v_1 $. In the next turn he will do the same for $ v_4 $, and in the last turn he can just offer the remaining (if any) free edges.
		\end{enumerate}
		\item Suppose $ e_H=7 $ and $ G[N(v_0)]\cong S_3 $. Let $ v_1 $ be the centre vertex. In the last three turns Waiter can offer in each turn the two free edges of $ v_i $ for $ i=2,3,4 $.
	\end{enumerate}
	We have shown that in every case Waiter has a winning strategy, hence there is no such $ G $.
\end{proof}
Continuing with the proof of the proposition, the next definition and the lemma that follows are influenced by the ideas of Nenadov, Steger and Stojakovi\'{c} in \cite{MBH}, but we need to make some necessary changes, since (as mentioned there) their proof will not go through for $ H=K_3 $ as that would be a contradiction to the result of \cite{MBClique}. \\
Let $ G $ be a graph. An edge in $ G $ is \emph{free} if it does not take part in any triangle, it is \emph{open} if it takes part in precisely one triangle, and it is \emph{half-open} if it takes part in precisely two triangles. Otherwise, it is \emph{closed}. 
\begin{defn}
	A $ K_3 $-core of $ G $ is a maximal subgraph $ G'\subseteq G $ such that 
	\begin{itemize}
		\item there are no free edges in $ G' $,
		\item every triangle in $ G' $ has at most one open edge, and
		\item every half-open edge is in at least one triangle which has no open edges.
	\end{itemize}
\end{defn}
Consider the following process for generating a $ K_3 $-core of graph $ G $. We set $ T $ to be the set of all triangles of $ G $, and define the subgraph $ G_T:=\bigcup_{t\in T}t $. Iteratively we remove from $ T $ all triangles with more than one open edge in $ G_T $, and all pair of triangles which share a half-open edge and both of them have an open edge in $ G_T $, updating $ G_T $ after each step. When the process ends $ G_T $ is a $ K_3 $-core of $ G $.
\begin{clm}
	Let $ G $ be a graph, and let $ \cl{G} $ be the family of bi-connected (2-connected) components of a $ K_3 $-core of $ G $. Suppose that for any $ G'\in\cl{G} $, Waiter has a winning strategy in the $ CW(G',K_3,2) $ game. Then Waiter has a winning strategy in the $ CW(G,K_3,2) $ game.
\end{clm}
\begin{proof}
	Since the bi-connected components are pairwise edge-disjoint, a winning strategy for each separate component yields a winning strategy for the $ K_3 $-core. After winning on the $ K_3 $-core Waiter will consider the removed triangles in the $ K_3 $-core generating process described above, but in reverse order. Each time he will add a triangle which has more than one open edge he will offer Client two of those open edges, and each time he adds a pair of triangles which share an half-open edge he will offer this edge and another open edge from each of those triangles. Finally, when there are no more triangles to add he can just play arbitrarily. It is not hard to verify that this is a winning strategy for Waiter.
\end{proof}
The next Lemma is rather technical. Its proof can be found in the Appendix.
\begin{lem}\label{lem: bound_core}
	There are constants $ c,L>0 $ such that w.h.p. every bi-connected component of the $ K_3 $-core of $ G_{n,p} $ is of size at most $ L $, when $ p\leq cn^{-1/2} $.
\end{lem}

We can now finish the proof of the proposition. From Lemma~\ref{lem: bound_core} we get that when $ p=cn^{-1/2} $ then w.h.p. Client wins if and only if he wins on some subgraph of $ G_{n,p} $ of order at most $ L $. On the other hand, w.h.p. every subgraph of order $ L $ will be with maximum density at most 2, which by Lemma~\ref{lem: Waiter_winning_riterion} will be Waiter's game.
\end{proof}

Lastly, we consider the Client-Waiter $ H $-game, where $ H $ is a tree. Theorem~\ref{thm:CW_random_H_biased} might lead us to think that the threshold for this game should be $ \Theta(n^{-1}) $, but in fact we will show that it is much lower. We start with a simple threshold for stars.
\begin{clm}
	Let $ S_k $ be the star with $ k $ edges. Then $ n^{-2k/(2k-1)} $ is a threshold function for Client's win in $ CW(G_{n,p},S_k,1) $.
\end{clm}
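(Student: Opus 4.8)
The plan is to characterize, up to the typical structure of $G_{n,p}$ in the relevant regime, exactly when Client can win $CW(G,S_k,1)$ on a fixed host graph $G$, and then feed in two standard facts about random graphs. The two observations I would use are: (a) if $\Delta(G)\ge 2k-1$ then Client wins, and (b) if every connected component of $G$ spans at most $2k-2$ edges then Waiter wins. The value $n^{-2k/(2k-1)}$ is exactly the point at which $G_{n,p}$ passes from situation (b) to situation (a): just below it all components are tiny, while just above it some vertex has degree $2k-1$.

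\emph{Client's side.} I would take $p\ge Cn^{-2k/(2k-1)}$ and use that $n^{-2k/(2k-1)}=n^{-1/m(S_{2k-1})}$, the star $S_{2k-1}$ being balanced with density $\tfrac{2k-1}{2k}$; hence by the standard threshold for containment of a fixed balanced graph, w.h.p.\ $G_{n,p}$ contains a vertex $v$ of degree $d:=\deg(v)\ge 2k-1$ (alternatively one argues directly that the number of vertices of degree $\ge 2k-1$ tends to infinity and is concentrated). Then Client fixes such a $v$ and plays greedily at $v$: in each turn, if the offered set contains a free edge incident to $v$ he claims one such edge, and otherwise he claims arbitrarily. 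Since every offered set has at most $q+1=2$ elements, the turns in which some free edge at $v$ is offered split into those offering exactly one such edge (say $a$ of them, each contributing one $v$-edge to Client) and those offering two (say $b$ of them, contributing one $v$-edge to Client and one to Waiter). As each of the $d$ edges at $v$ is offered exactly once, $a+2b=d$, so Client ends with $a+b=d-b\ge\lceil d/2\rceil\ge k$ edges at $v$, i.e.\ a copy of $S_k$.

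\emph{Waiter's side.} I would take $p\le cn^{-2k/(2k-1)}$. Then $np=o(1)$ (since $\tfrac{2k}{2k-1}>1$), so w.h.p.\ $G_{n,p}$ is a forest; moreover $n^{-2k/(2k-1)}$ is the threshold for the appearance of each of the finitely many trees on $2k$ vertices (all of maximum density $\tfrac{2k-1}{2k}$), so w.h.p.\ $G_{n,p}$ contains none of them, and hence — since a connected graph on at least $2k$ vertices contains a subtree on $2k$ vertices — every component of $G_{n,p}$ has at most $2k-1$ vertices and therefore at most $2k-2$ edges. On such a $G$, Waiter fixes for every component $C$ a pairing of its $e_C\le 2k-2$ edges (plus one leftover edge if $e_C$ is odd) and offers these pairs and the leftover singletons; then Client finishes with exactly $\lceil e_C/2\rceil\le k-1$ edges inside each $C$. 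A copy of $S_k$ in Client's graph would be connected, hence would lie inside one component and use $k$ of Client's edges there, which is impossible; so Waiter wins.

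The routine ingredients are the random-graph facts (the threshold for a vertex of degree $2k-1$, the threshold for all trees on $2k$ vertices, and $np\to 0\Rightarrow$ forest). The one point worth flagging is that Waiter's half genuinely requires the \emph{small-component} structure of $G_{n,p}$ and not merely the bound $\Delta(G_{n,p})\le 2k-2$: Client already wins $CW(P_4,S_2,1)$ — a path with three edges has maximum degree $2$, yet Client can force two adjacent edges — so a pure maximum-degree argument would be false. What saves Waiter is that each component is too small to contain $k$ of Client's edges after the pairing, and this is tight: the minimal violating component is exactly the star $S_{2k-1}$ that Client exploits on the other side of the threshold.
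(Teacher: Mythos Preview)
Your proof is correct and follows essentially the same approach as the paper: a high-degree vertex lets Client greedily take half its edges, while small components let Waiter pair up edges within each component. One small wording point: the claim concerns a coarse threshold, so the random-graph inputs you need are for $p\gg n^{-2k/(2k-1)}$ and $p\ll n^{-2k/(2k-1)}$ rather than for fixed constants $C,c$ (at $p=cn^{-2k/(2k-1)}$ with constant $c$ the probability of a tree on $2k$ vertices is bounded away from $0$), but your argument goes through verbatim with that adjustment.
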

\begin{proof}
	When $ p\gg n^{-2k/(2k-1)} $, there will be w.h.p. a vertex $ v $ in $ G_{n,p} $ with degree $ 2k-1 $. Client's strategy will be to take an edge at $ v $ every time such an edge is offered to him, thus getting $ S_k $ with $ v $ at the centre.\\
	On the other hand, assume $ p\ll n^{-2k/(2k-1)}  $. Then w.h.p. every component of $ G_{n,p} $ is a tree with at most $ 2k-2 $ edges. If Waiter will play every turn on a single component then there will not be a component in Client's graph with more than $ k-1 $ edges, and in particular Client will not claim a copy of $ S_k $.
\end{proof}

On the other hand, the next claim shows that Waiter has a winning strategy in $ CW(G_{n,p},P_{k+1},1) $ provided $ p\ll n^{-2^{k/2}/(2^{k/2}-1)} $. Thus trees of the same order might have different 
thresholds.\\

\begin{clm}
	Let $ P_{k+1} $ be the path with $ k $ edges. Then Waiter wins $ CW(T_n, P_{k+1}, 1) $, where $ T_n $ is any tree of order $ n< 2^{k/2} $.
\end{clm}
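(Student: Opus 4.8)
The plan is to reduce the game to a purely structural statement about sub-forests of a tree. Since Client only ever claims edges of $T_n$, his graph $G_C$ is at every moment a sub-forest of $T_n$; in particular every path of $G_C$ is a path of the tree $T_n$ all of whose edges Client owns. So Waiter wins $CW(T_n,P_{k+1},1)$ precisely when he has a strategy forcing the longest path of $G_C$ to have fewer than $k$ edges. I would prove, by induction on $k$, the statement: \emph{for every tree $T$ on fewer than $2^{k/2}$ vertices, Waiter can force the longest path of $G_C$ to have at most $k-1$ edges}. The base cases $k\le 2$ are trivial, since $2^{k/2}\le 2$ then forces $v(T)=1$, so there are no edges to play and $G_C$ is empty.

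For the induction step I would have Waiter fix a centroid $v$ of $T$, so that every component $T_1,\dots,T_d$ of $T-v$ has at most $(n-1)/2<2^{(k-2)/2}$ vertices. Waiter then runs three interleaved sub-games on pairwise disjoint edge sets: on each $T_i$ he plays (using the induction hypothesis for $k-2$) a strategy keeping the longest path of $G_C\cap T_i$ below $k-2$ edges; and on the $d$ edges $vu_i$ at $v$ (with $u_i\in T_i$) he plays a pairing strategy, \emph{pairing together the edges leading to the two largest components}, then the next two, and so on, so that Client ends up owning at most one edge out of each such pair. If a path $P$ of $G_C$ avoids $v$ it lies in a single $T_i$, so $|P|\le k-3$; if $v$ is an endpoint of $P$, then $P$ enters one $T_i$ and $|P|\le 1+(k-3)$; in both cases $|P|<k$.

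The delicate case — and the main obstacle — is when $v$ is an \emph{interior} vertex of $P$: then $P$ uses two of Client's edges at $v$, running into two components $T_a,T_b$, and the naive estimate gives only $|P|\le (k-3)+2+(k-3)=2k-4$, which is worthless. The role of the size-sorted pairing is that $\{a,b\}$ cannot be the pair of the two largest components, so at least one of $T_a,T_b$ is no larger than the third largest component, hence has at most $(n-1)/3$ vertices; together with the complementary observation that when $v$ has large degree \emph{all} components are automatically small, one should be able to run the two branches of $P$ at suitably different recursion parameters (and, if needed, carry along as a second induction quantity the length of the longest path of $G_C\cap T_i$ with prescribed endpoint $u_i$) so that the interior-vertex inequality closes with each recursive level costing exactly $2$ in the parameter $k$ while the admissible number of vertices doubles — which is exactly the content of the bound $n<2^{k/2}$. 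I expect essentially all of the work to be in this last step: choosing the recursion parameters for the various components and verifying the interior-vertex bound across all degree ranges of the centroid; the reduction to sub-forests and the overall induction scheme are routine.
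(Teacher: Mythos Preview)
Your proposal has a real gap precisely where you say the work lies, and the fix you sketch does not close it. With the centroid decomposition each $T_i$ has fewer than $2^{(k-2)/2}$ vertices, so induction only guarantees that Client's longest path inside $T_i$ has at most $k-3$ edges. Your size-sorted pairing at $v$ ensures that if Client owns two edges $vu_a$, $vu_b$ then $\{a,b\}$ is not the top pair, so one of the two components---say $T_b$---has at most $(n-1)/3<2^{(k-3)/2}$ vertices. But the other component can still be the largest one, of size up to $(n-1)/2$. Applying induction with parameters $k-2$ and $k-3$ respectively gives a path through $v$ of length at most $(k-3)+2+(k-4)=2k-5$, which for $k\ge 5$ is far above the target $k-1$. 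Carrying a second quantity such as ``longest rooted path from $u_i$'' does not help by itself: that quantity satisfies essentially the same recursion, and the loss of a factor $2$ at each centroid split (rather than a factor $2^{k/2}$) means the scheme proves at best an $O(\log n)$ bound on the longest rooted path in each branch, so the two branches together still sum to order $\log n$ rather than $\tfrac12\log n$. In short, the centroid recursion halves $n$ per level but you need it to square-root $n$ per level for the bookkeeping to close, and nothing in the proposal bridges that gap.

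The paper's argument is entirely different and almost a one-liner. In a tree any two vertices are joined by a unique path, so $T_n$ contains at most $\binom{n}{2}$ copies of $P_{k+1}$, each with $k$ edges. Now apply the potential criterion for Waiter's win (Bednarska-Bzd\c{e}ga): Waiter wins $CW(X,\cl F,1)$ whenever $\sum_{A\in\cl F}2^{-|A|}<\tfrac12$. Here the sum is at most $\binom{n}{2}2^{-k}<\tfrac{n^2}{2}\cdot 2^{-k}<\tfrac12$ once $n<2^{k/2}$. No structural induction is needed; the quadratic count of paths in a tree is exactly what produces the exponent $k/2$.
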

\begin{proof}
Observe that $ T_n $ contains at most $ \binom{n}{2} $ copies of $ P_{k+1} $. Indeed, each path in $ T_n $ is uniquely defined by its two end points. The claim now follows from the next Waiter's winning criterion by Bednarska-Bzd\c{e}ga.
	\begin{lem}[Corollary 1.4 in \cite{CP_weight}]
		For a set $ X $ and a family of subsets $ \cl{F} $, if
		\[ \sum_{A\in\cl{F}}2^{-|A|}<\dfrac{1}{2}, \]
		then Waiter wins the $ CW(X,\cl{F},1) $ game.
	\end{lem}
\end{proof}

Though we cannot expect to find a single threshold probability for all trees of size $ k $ which depends only on $ k $, we can still show that at any rate these probabilities must be much smaller than inverse linear.
\begin{proof}[\bfseries Proof of Proposition~\ref{prp:CW_random_tree}]
	We may and will assume $ k\geq 3 $. Set $ m=(k(q+1))^2 $ and let $ T=T_{m,k} $ be the complete $ m $-ary tree of height $ k $. For an internal vertex $ x\in V(T) $ let $ A_x $ be the set of edges from $ x $ towards the leaves of $ T $, and let $ \cl{F} $ be the following family of edges,
	\[ \cl{F}=\{F\subseteq A_x: |F|=m-k+1, \text{ for some internal vertex $ x $}\}.\]
	Since $ \cl{F} $ is $ (m-k+1) $-uniform, and $ |\cl{F}|=\sum_{i=0}^{k-1}m^i\binom{m}{k-1} $ we get that
\begin{align*}
	\sum_{A\in\cl{F}}e^{-|A|/(q+1)}&\leq \binom{m}{k-1}\cdot\dfrac{m^k}{m-1}\cdot e^{-(m-k+1)/(q+1)}\\
	&\leq\dfrac{m}{m-1}\cdot\left(\dfrac{em^2}{k-1}\right)^{k-1}e^{-(m-k+1)/(q+1)}\\
	&\leq \left(\dfrac{em^2}{k-1}\right)^{k-1}e^{-(m-k)/(q+1)}.
\end{align*}
	The last expression will be smaller than 1 when 
	\[ (k-1)(q+1)(1+2\ln m-\ln (k-1))<m-k. \]
	Using our choice for $ m $ and rearranging we get that the above condition will be true when $ k\geq 3 $ and $ q\geq 1 $, thus by Theorem~\ref{thm:tool_transversal} Client has a strategy such that by the end of the game he claimed at least $ k $ out-edges of any internal vertex in $ T $, which means that he claimed a copy of $ T_{k,k} $. \\
	Since $ v_T=(m^{k+1}-1)/(m-1) $, for $ p=n^{-(m^{k+1}+1)/m^{k+1}}\gg n^{-v_T/(v_T-1)} $ w.h.p. $ G_{n,p} $ will contain $ T $ and thus $ CW(G_{n,p}, T_{k,k}, q) $ will be Client's win. 
\end{proof}

\section{Final words}\label{sec:discussion}
We have investigated several Client-Waiter games played on the edges of the complete graph. We have found that the critical bias for the maximum-degree-$ k $ game is asymptotically between $ n/k $ and $ 2n/k $ (Proposition~\ref{prp:star}). A natural question is whether either of the bounds can be improved. 
\begin{qst}
	Can we improve either bounds of Proposition~\ref{prp:star}?
\end{qst}
In the giant component game we discussed a phase transition taking place between $ n/2 $ and $ 1.6n $ (Theorem~\ref{thm:component}), where Client's achievement in this game drops from a linear-sized component to at most logarithmic. A more dramatic drop was observed in the path game: from linear length when $ q<n/2 $ to at most $ \ln\ln n $ when $ q>n $ (Theorem~\ref{thm:path}). For both games, but particularly in the giant component game, it is interesting to know if the phase transition can be more accurately located. In \cite{Man_waiter} Bednarska-Bzd\c{e}ga et al. showed that in the Waiter-Client large component game the phase transition happens around $ q=n $ (which is in accordance with the probabilistic intuition), so it is natural to expect that at least the $ 1.6n $ upper bound of the phase transition in the Client-Waiter large component game could be brought closer to $ n $, especially since we feel that Client is somewhat weaker in most games in comparison to Waiter in the corresponding Waiter-Client game.
\begin{qst}
	Can we narrow the phase transitions described in Theorem~\ref{thm:component}(ii)+(iii) and Theorem~\ref{thm:path}(ii)+(iii)?
\end{qst}

We have seen an improvement on the lower bound of the $ H $-game played on the edge set of the complete graph (i.e. Proposition~\ref{prp:k-clique lower bound}). Both the probability intuition and comparison to partial results for the Waiter-Client game (\cite{PC_H}) lead us to wonder whether the critical bias should be no higher than $ O(n^{1/m(H)}) $. An intermediate challenge could be the following.
\begin{qst}
	Is it true that for any integer $ k\geq 3 $ there is $ c>0 $ such that Waiter wins $ CW(K_n, K_k, q) $ when $ q\geq cn^{2/(k-1)} $?
\end{qst}

We have also studied the Client-Waiter $ H $-game played on the edges of the random graph. We have seen that essentially there is little difference between this game and the corresponding Maker-Breaker game, and we extended the result of Nenadov, Steger and Stojakovi\'c (\cite{MBH}) to include the biased version of the game, and the case of $ H=K_3 $ with bias at least 2. We also showed that when $ H $ is a tree the picture is more complex and highly depends on the exact structure of the tree, but nevertheless there is always some $ \epsilon>0 $ for which Client wins the game w.h.p. when $ p=n^{-1-\epsilon} $.

\textbf{Acknowledgment.} The authors would like to thank Ma\l{}gorzata Bednarska-Bzd\c{e}ga for suggesting the use of containers for the proof of Proposition~\ref{prp:k-clique lower bound}. They are also thankful to Dan Hefetz for reviewing a preliminary draft of this paper and his many helpful remarks.

\bibliographystyle{siam}
\bibliography{paper}

\begin{appendices}
	
	\section{Two missing proofs from Section~\ref{sec:H-game_on_G_n,p}}\label{apx:A}
	\begin{proof}[\bfseries The end of the proof of Proposition~\ref{prp:random_waiter_criterion}]
		Set $ k=\lfloor m_2(H)\rfloor $ and $ x=m_2(H)-k $. We consider two cases.
		\begin{enumerate}[label=\alph*)]
			\item $ 0\leq x<1/2 $. Let $ v $ be a vertex with $ d_H(v)=\delta(H) $. Since $ H $ is strictly 2-balanced we have
			\begin{align*}
			m_2(H\backslash\{v\})=\dfrac{e_H-1-\delta(H)}{v_H-3}<\dfrac{e_H-1}{v_H-2}=m_2(H),
			\end{align*}
			which leads to $ \delta(H)>m_2(H) $, and so $ \delta(H)\geq k+1 $. Suppose $ G $ is a minimal graph which contradicts the proposition. If $ v $ is a vertex with $ d_G(v)\leq 2(\delta(H)-1) $, then Waiter can play his winning strategy on $ G\backslash \{v\} $ (which exists by the minimality of $ G $) and in the last $ \delta(H)-1 $ turns offer the edges of $ v $. We have found a winning strategy for Waiter on $ G $, and that is a contradiction. Assume then that $ \delta(G)\geq 2\delta(H)-1\geq 2k+1 $. But then $ m(G)\geq k+1/2>m_2(H) $ and we have reached a contradiction again.
			\item $ x\geq 1/2 $. We consider further subcases.
			\begin{enumerate}[label=\roman*.]
				\item If $ k\geq 3 $, then, using that  $ e_H\leq\binom{v_H}{2}< \tfrac{3}{4}v_H^2-v_H $ (since $ v_H\geq 4 $), we get that
				\[ \left \lceil\dfrac{m(G)}{2}\right \rceil\leq \left \lceil\dfrac{k+1}{2}\right \rceil\leq k-1<m(H), \]
				and by Lemma~\ref{lem:dens} Waiter has a winning strategy.
				\item If $ e_H<v_H^2/4 $, then $ \tfrac{e_H}{v_H} +1/2>\tfrac{e_H-1}{v_H-2}$, and we get
				\[ \left \lceil \dfrac{m(G)}{2}\right \rceil\leq \left \lceil \dfrac{m_2(H)}{2}\right \rceil\leq  \left \lceil\dfrac{k+1}{2}\right \rceil \leq k \leq m_2(H)-1/2<m(H), \]
				and again by Lemma~\ref{lem:dens} Waiter has a winning strategy.
				\item Suppose that $ e_H\geq \lceil v_H^2/4 \rceil $ and $ k<3 $ and $ v_H\geq 5 $. Then
				\[ m_2(H)=\dfrac{e_H-1}{v_H-2}\geq \dfrac{ \lceil v_H^2/4 \rceil-1}{v_H-2}\geq 2. \]
				For any subgraph $ G'\subseteq G $, we have $e_{G'}/(v_{G'}-1)=e_{G'}/v_{G'}+e_{G'}/(v_{G'}(v_{G'}-1))\leq e_{G'}/v_{G'}+1/2 $, which together with $ m_2(H)<3 $ leads to
				\[ ar(G)\leq m(G)+1/2\leq m_2(H)+1/2<4. \]
				On the other hand, using $ m_2(H)\geq 2.5 $ and $ v_H\geq 5 $, we derive
				\[ ar(H)\geq \dfrac{e_H}{v_H-1}= \dfrac{m_2(H)(v_H-2)+1}{v_H-1} \geq 2.  \]
				We got that $ \lceil ar(G)/2\rceil<ar(H) $ and by Lemma~\ref{lem:arb} Waiter wins.
				\item The remaining case is $ v_H=4 $. In this case $ H=C_4 $ or $ H=K_4 $, as those are the only strictly 2-balanced graphs on 4 vertices. The latter can be proved by an adaptation of Lemma 2.1 in \cite{MBClique}, while in the former we have $ ar(C_4)=4/3 $, and $ ar(G)\leq m(G)+1/2\leq m_2(C_4)+1/2\leq 2 $, and again Waiter wins by Lemma~\ref{lem:arb}.
				
			\end{enumerate}
		\end{enumerate}
	\end{proof}
	
	\begin{proof}[\bfseries Proof of Lemma~\ref{lem: bound_core}]
		We call a triangle $ T $ \emph{unproblematic} if at least two of its edges are open or one of its edges is open and it shares a half-open edge with triangle $ T_1 $ which has at least one open edge. Otherwise we call $ T $ \emph{problematic}. Let $ G' $ be a bi-connected component of the $ K_3 $-core of $ G=G_{n,p} $. We describe a process to construct $ G' $ from the empty graph by repeatedly attaching triangles.\\
		
		\begin{algorithmic}[1]
			\State {Let $ T_0 $ be a triangle in $ G' $}
			\State {$ k\gets 0; \hat{G}\gets T_0 $}
			\While{$ \hat{G}\neq G' $}
				\State {$ k\gets k+1 $}
				\If{$ \hat{G} $ contains a triangle which is unproblematic in $ \hat{G} $}
				\State 	{let $ \ell<k $ be the smallest index such that $ T_\ell $ is an unproblematic triangle in $ \hat{G} $}
					\If{there is a triangle $ T\subset G' $ such that $ T $ contains one of $ T_\ell $'s open edges}
						\State {$ T_k=T $}
					\Else
						\State {let $ e\in T_\ell $ be a half-open edge of $ T_\ell $}
						\If{there is a triangle $ T\subset G' $, $ T\notin \hat{G} $ and $ T $ contains $ e $}
							\State {$ T_k=T $}
						\Else
							\State {let $ T'\subset \hat{G} $ be the other triangle which contains $ e $}
							\State {let $ T_k\subset G' $ such that $ T_k $ contains one of $ T' $'s open edges}						
						\EndIf
					\EndIf
				\Else
					\State {let $ T_k $ be a triangle in $ G' $ that is not contained in $ \hat{G} $ and intersects $ \hat{G} $ in at least one edge}
				\EndIf
				\State {$ \hat{G}\gets\hat{G}\cup T_k $}
			\EndWhile		
		\end{algorithmic}

		\noindent	We need to show that w.h.p. the highest value of $ k $ is bounded by some constant. For $ i\geq 1 $, let $ T_i $ be the triangle added to $ \hat{G} $ at the $ i $-th step, and let $ \hat{G}_i $ be the graph $ \hat{G} $ just after adding $ T_i $. If $ T_i $ intersected $ \hat{G}_{i-1} $ in exactly one edge we call $ T_i $ \emph{regular}, while if it intersected $ \hat{G}_{i-1} $ in three vertices we call it \emph{degenerate}. Denote by $ \reg(i) $ and $ \deg(i) $ the number of regular, resp. degenerate, triangles in $ \hat{G}_i $. Furthermore, for $ 1\leq i\leq \ell $ we say that $ T_i $ is \emph{fully-open} at time $ \ell $ if $ T_i $ has a vertex which is not touched by any other triangle of $ \hat{G}_\ell $ (notice that $ T_i $ is necessarily regular in this case). Denote by $ f(\ell) $ the number of fully-open triangles at time $ \ell $. 
		\begin{clm}\label{clm:fully_open}
			For every $ \ell\geq 1 $, assuming the process does not stop before the $ \ell $-th step, we have
			\[ f(\ell)\geq \frac{1}{2}\cdot\reg(\ell)-3\cdot\deg(\ell). \]
		\end{clm}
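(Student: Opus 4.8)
The plan is to prove, by induction on $\ell$, the slightly sharper statement that the number $g(\ell)$ of \emph{regular} triangles among $T_1,\dots,T_\ell$ that are fully-open at time $\ell$ already satisfies $g(\ell)\ge\tfrac12\reg(\ell)-3\deg(\ell)$; since a degenerate triangle has all three of its vertices on earlier triangles of $\hat G$ at the moment it is inserted, it is never fully-open, so $f(\ell)\ge g(\ell)$ and the claim follows. Before the induction I would record two observations. First, a regular triangle $T_i$ is fully-open at time $m$ precisely when its unique new vertex $v_i$ still has degree $2$ in $\hat G_m$ — equivalently, no later triangle $T_k$ (with $i<k\le m$) passes through $v_i$ — and in particular $T_i$ is fully-open at the very moment it is created. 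Second, if $T_i$ is fully-open then the two edges of $T_i$ at $v_i$ are open in the current graph (a triangle through such an edge would have to reuse the other edge at $v_i$, hence equal $T_i$), so a fully-open regular triangle is in particular \emph{unproblematic}.

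The induction step splits according to the type of the move. If $T_\ell$ is degenerate, then $\reg$ is unchanged, $\deg$ increases by one, $T_\ell$ itself is not fully-open, and $T_\ell$ meets at most three vertices that already carried triangles, so at most three fully-open regular triangles can be destroyed; hence $g$ drops by at most $3$ and the inequality, with its $-3\deg$ term, is preserved. If $T_\ell$ is regular it adds itself to the fully-open family, contributing $+1$ to $g$, and it destroys at most one previously fully-open regular triangle: $T_\ell$ is glued along exactly one edge $\{a,b\}$ of $\hat G_{\ell-1}$, and if $a=v_i$ is the private vertex of a fully-open regular $T_i$ then, since every edge of $\hat G_{\ell-1}$ at $v_i$ lies in $T_i$, the vertex $b$ is a vertex of $T_i$ and therefore cannot simultaneously be the private vertex of a second fully-open regular triangle. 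Thus every regular move changes $g$ by $0$ or $+1$.

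The factor $\tfrac12$ has to be recovered by amortisation, and this is where the \emph{smallest-index unproblematic triangle} rule enters. A regular move can only destroy a fully-open regular triangle $T$ while it is executing one of the \emph{fixing} sub-cases for $T$ (it kills $T$ only by passing through $v_T$, which happens only when it inserts a triangle through one of $T$'s two open edges at $v_T$). Using the second observation together with a check of the sub-cases, one shows that a single fixing move never turns $T$ non-unproblematic — after one fix, $T$ still has an open edge at $v_T$ and now shares a half-open edge at $v_T$ with the freshly inserted triangle, which still carries an open edge — so the algorithm keeps $T$ as the target for at least one further move, and those further moves pass only through $v_T$ (already dead), through the old vertices $p,q$ of $T$ (which carry $T$, hence are nobody's private vertex), and through a fresh vertex, so they destroy no already-fully-open regular triangle. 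Consequently the moves partition into blocks, one per destroyed fully-open regular triangle, each block consisting of at least two consecutive moves of which exactly one is the destroying (``bad'') move; the regular moves in a block that are not the bad one contribute $+\tfrac12$ each to the potential $g(\ell)-\tfrac12\reg(\ell)+3\deg(\ell)$, paying for the $-\tfrac12$ of the bad move, and the degenerate moves inside a block are absorbed as before. Tracking this potential through the induction gives $g(\ell)\ge\tfrac12\reg(\ell)-3\deg(\ell)$, hence the claim.

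The hard part will be exactly this amortisation. One has to control the cost of the ``chains'' in which the triangle created by a fixing move is itself later targeted by the fixing procedure — the factor $\tfrac12$ is precisely what a long such chain costs — and turning the informal block decomposition into a clean invariant requires a somewhat tedious pass through the algorithm's sub-cases, in particular keeping track of whether the third vertex of a fixing triangle is new (so the move is regular) or old (so the move is degenerate, and is charged against the $-3\deg$ slack).
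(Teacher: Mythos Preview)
Your opening observations are correct and useful: a fully-open regular triangle has two open edges at its private vertex, hence is unproblematic; a regular move destroys at most one previously fully-open triangle; a degenerate move destroys a bounded number (the paper sharpens this to at most two, but your bound of three is enough for the $-3\deg$ term). Note also that every fully-open triangle is regular by definition, so your $g$ coincides with $f$ and the ``slightly sharper'' statement is exactly the claim.

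The real difficulty is the amortisation, and here your block argument has a concrete gap. You assert that after a first fixing move for $T$ the subsequent move, while $T$ is still the target, passes only through $v_T$, the base vertices $p,q$ of $T$, and a fresh vertex. This fails in sub-case (line 15) of the algorithm: if at the next step $T$ still has its other open edge at $v_T$ but no $G'$-triangle sits on it, and the half-open edge $e$ picked is the one just created (shared with the first fixing triangle $T_k$), then the ``other'' triangle $T'$ on $e$ is $T_k$ itself, and the move is glued along an open edge of $T_k$ --- through the private vertex $w$ of $T_k$, which is neither $v_T$ nor $p$ nor $q$. That move destroys the fully-open $T_k$. So two consecutive regular moves can each be ``bad'', and your partition into blocks of size $\ge 2$ with exactly one bad move per block breaks down. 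You flag this yourself (``chains''), but the proposal does not show how to close it.

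The paper avoids the block/amortisation route entirely. It strengthens the induction hypothesis to $f(\ell)\ge\varphi(\ell)+1$ whenever $T_\ell$ is degenerate (this is where the overkill in the degenerate bound pays off), and then, for the regular case, looks back at most two steps. The heart of the argument is that three consecutive ``bad'' regular moves are impossible: if $T_{\ell-2}$ destroys a fully-open $T''$, then $T''$ remains the smallest-index unproblematic target, and tracing the sub-cases shows that $T_{\ell-1}$ can only destroy $T_{\ell-2}$ (this is exactly your chain), after which $T_\ell$, still targeted at $T''$, can only reach $T_{\ell-2}$ again --- which is no longer fully-open. This local impossibility, together with the $+1$ slack from degenerate moves, is enough for the straight induction to go through. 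If you want to salvage the amortisation viewpoint, the cleanest fix is to prove exactly this ``no three consecutive bad regular moves'' lemma; then any run of bad regular moves has length at most two and is either preceded by a good regular move or by a degenerate move, and the potential balances.
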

		\begin{proof}
			Denote the right hand side of the above by $ \varphi(\ell):=\reg(\ell)/2-3\deg(\ell) $. We will use induction to show that the following stronger statement holds for any $ \ell\geq 1 $
			\[ f(\ell)\geq\begin{cases}
			\varphi(\ell), & \text{if } T_\ell \text{ is regular}\\
			\varphi(\ell)+1, & \text{if } T_\ell \text{ is degenerate.}
			\end{cases} \]
			This is true for $ \ell=1 $ since $ T_1 $ must be regular and $ f(1)=1>1/2 $. At $ \ell=2 $ triangle $ T_0 $ still has two open edges, hence $ T_2 $ is regular as well and $ f(2)=2> 1 $. Suppose now that we are at the $ \ell $-th step, $ \ell\geq 3 $. If $ T_\ell $ is degenerate, then since $ T_\ell $ shares an edge with at most two fully-open triangles we have $ f(\ell)\geq f(\ell-1)-2\geq\reg(\ell-1)/2-3\deg(\ell-1)-2=\reg(\ell)/2-3(\deg(\ell)-1)-2=\varphi(\ell)+1 $. Otherwise, assume that $ T_\ell $ is regular. Consider two cases
			\begin{itemize}
				\item If $ T_\ell $ does not connect to a fully-open triangle then since $ T_\ell $ is regular and fully-open, $ f(\ell)=f(\ell-1)+1\geq \varphi(\ell-1)+1=\varphi(\ell)+1/2 $.
				\item If $ T_\ell $ does connect to a fully-open triangle then $ f(\ell)=f(\ell-1) $. If $ T_{\ell-1} $ was degenerate then $ f(\ell-1)\geq \varphi(\ell-1)+1=\varphi(\ell)+1/2 $. Assume then that $ T_{\ell-1} $ is regular and connected to $ T' $. If $ T' $ was not fully open then $ f(\ell)=f(\ell-2)+1\geq\varphi(\ell-2)+1\geq (\reg(\ell)-2)/2-3\deg(\ell)+1=\varphi(\ell) $. Assume then that $ T' $ was fully open. If $ T_{\ell-2} $ was degenerate then $ f(\ell)=f(\ell-2)\geq\varphi(\ell-2)+1=\varphi(\ell) $. Assume that $ T_{\ell-2} $ was regular and connected to $ T'' $. Again if $ T'' $ was not fully open we are done, otherwise $ T'' $ is fully open and $ T_{\ell-2} $ is connected to one of its open edges, $ e $. At time $ \ell-1 $, $ T'' $ is still problematic since it has an open edge and $ e $ is half-open and $ T_{\ell-2} $ is fully open. Then by our algorithm, and since $ T' $ is fully open, it must be that $ T'=T_{\ell-2} $. But then at time $ \ell $, $ T'' $ is still problematic and therefore $ T_\ell $ must connect to $ T_{\ell-2} $. But $ T_{\ell-2} $ is no longer fully open, which is a contradiction.
			\end{itemize}
		\end{proof}
		Returning to the proof of the lemma, suppose we are in the $ i $-th step of the process described above. We first bound the probabilities of finding certain triangles. If there is an unproblematic triangle and we are about to add a regular triangle, then there are at most four edges to which we may connect, and at most $ n $ possibilities for the extra vertex. Thus we bound, $ \Pr_{reg, unprob.}\leq 4np^2\leq 4c^2<1/2 $, if we choose $ c<1/\sqrt{8} $. A degenerate triangle just adds at least one new edge to the graph, which at the $ i $-th step has at most $ 3i $ vertices. Thus $ P_{deg}(i)\leq\binom{3i}{3}p\leq (3i)^3cn^{-1/2} $. Set $ L=42 $ and $ \ell_0=4\log_2n $. Let $ X $ be the random variable of the number of sequences, when running this process on $ G_{n,p} $, that contain at least 7 degenerate triangles in the first $ \ell_0 $ steps. After the $ L $-th step, and as long as we have less then 7 degenerate triangles, we must have unproblematic triangles. Because for any $ \ell>L $, by Claim~\ref{clm:fully_open}  $ f(\ell)\geq (\ell-6)/2-18>0 $. So the probability for a regular triangle after the $ L $-th step and before the 7-th degenerate triangle is at most $ 1/2 $. Denote by $ \ell' $ the moment in which the 7-th degenerate triangle appears, then we have
		\begin{align*}
		\EE[X]&\leq \binom{n}{3}\sum_{8\leq \ell'\leq \ell_0}\binom{\ell'-1}{6}(27\ell_0^3cn^{-1/2})^7\cdot 2^{-(\ell'-L-6)}=o(1).
		\end{align*} 
		Now denote by $ Y $ the random variable of the number of sequences that last more than $ \ell_0 $ steps and contain less than 7 degenerate triangles in the first $ \ell_0 $ steps. We get
		\begin{align*}
		\EE[Y]\leq \binom{n}{3}\sum_{k=0}^{6}\binom{\ell_0}{k}(27\ell_0^3cn^{-1/2})^k\cdot 2^{-(\ell_0-L-k)}=o(1),
		\end{align*}
		by our choice of $ \ell_0 $. So w.h.p. $ X=Y=0 $, which means that all the processes last for less than $ \ell_0 $ steps and contain at most 6 degenerate triangles. Denote by $ \ell_e $ the length of such process. Then it must be that $ f(\ell_e)=0 $, or we would still have fully-open triangles at step $ \ell_e $. Thus by Claim~\ref{clm:fully_open}
		\begin{align*}
		0=f(\ell_e)\geq \reg(\ell_e)/2-3\deg(\ell_e)\geq (\ell_e-6)/2-18\Longrightarrow \ell_e\leq 42.
		\end{align*}
	\end{proof}
	
\end{appendices}
\end{document}